\newtheorem{theo}{Theorem}[section]
\newtheorem{prop}[theo]{Proposition}
\newtheorem{lemma}[theo]{Lemma}
\newtheorem{introtheo}{Theorem}
\theoremstyle{definition}
\newtheorem{defi}[theo]{Definition}
\newtheorem{rema}[theo]{Remark}
\newtheorem{exam}[theo]{Example}
\renewcommand{\det}{\mathrm{det}}
\renewcommand{\hat}{\widehat}
\renewcommand{\tilde}{\widetilde}
\newcommand{\be}{\begin{equation}}
\newcommand{\ee}{\end{equation}}
\newcommand{\Zqc}{Z_q}
\newcommand{\Zq}{\mathbf{Z}_q}
\newcommand{\R}{\mathbb{R}}
\newcommand{\C}{\mathbb{C}}
\newcommand{\E}{\mathbb{E}}
\newcommand{\Er}{\mathrm{E}^*}
\newcommand{\Z}{\mathbb{Z}}
\newcommand{\Tr}{\mathrm{Tr}}
\newcommand{\Det}{\mathrm{Det}}
\newcommand{\T}{\mathbb{T}}
\newcommand{\trqc}{\mathrm{tr}_q}
\newcommand{\trq}{\mathbf{tr}_q}
\newcommand{\htrq}{\hat{\mathbf{tr}}_q}
\newcommand{\Kdetq}{\mathrm{Kdet}_q}
\newcommand{\GL}{\mathrm{GL}}
\newcommand{\SL}{\mathrm{SL}}
\newcommand{\Mat}{\mathrm{Mat}}
\title{A quantum N-dimer model}
\author[D. C. Douglas]{Daniel C. Douglas}
\address{Department of Mathematics, Virginia Tech, 225 Stanger Street, Blacksburg, VA 24061}
\email{dcdouglas@vt.edu}
\author[R. Kenyon]{Richard Kenyon}
\address{Department of Mathematics, Yale University, New Haven CT 06511}
\email{richard.kenyon@yale.edu}
\author[N. Ovenhouse]{Nicholas Ovenhouse}
\address{Department of Mathematics, Michigan State University, 619 Red Cedar Road, East Lansing, MI 48824}
\email{ovenhou3@math.msu.edu}
\author[S. Panitch]{Samuel Panitch}
\address{Department of Mathematics, Yale University, New Haven CT 06511}
\email{sam.panitch@yale.edu}
\author[S. Tata]{Sri Tata}
\address{Department of Mathematics, Yale University, New Haven CT 06511}
\email{sri.tata@yale.edu}
\date{\today}
\thanks{R.K. was supported by the Simons Foundation grant 327929.}
\begin{document}

\begin{abstract}
We study a quantum version of the $n$-dimer model from statistical mechanics, based on the formalism from quantum topology   developed by Reshetikhin and Turaev (the latter which, in particular, can be used to construct the Jones polynomial of a knot in $\mathbb{R}^3$).  We apply this machinery to construct an isotopy invariant polynomial for knotted bipartite   ribbon graphs in $\mathbb{R}^3$, giving, in the planar setting, a quantum \hbox{$n$-dimer} partition function.    As one application, we compute the expected number of loops in the (classical) double dimer model for planar bipartite graphs.
\end{abstract}

\maketitle

\section{Introduction}\label{sec:intro}

In this paper we study a model blending ideas from low dimensional topology and representation theory, specifically quantum topology,   with ideas from combinatorics and probability, specifically statistical mechanics.  Our construction can be thought of as   a Jones-polynomial-like quantum invariant for bipartite graphs in two and three dimensions having strong connections to the dimer model.    While here we deal with the topologically trivial settings of $\R^2$ and $\R^3$, we hope this will be a first step in studying similar   models for 2- and 3-dimensional manifolds, where it is natural to incorporate the language of skein theory.  

\subsection{The Jones polynomial}

The birth of quantum topology largely coincided with Jones' discovery \cite{Jones} of a Laurent polynomial $J_q(K)$ in a single variable $q^\frac{1}2$ associated to each knot $K$ in $\R^3$ and independent of isotopy.  Witten \cite{Witten} showed that the Jones polynomial can be understood from the point of view of quantum field theory by associating to any knot $K$ in a closed three manifold $M$ colored by a representation of $\mathrm{SU}_n$ a quantum invariant defined by a path integral, which essentially recovers the Jones polynomial when $M=S^3$ and the color is the defining representation of $\mathrm{SU}_2$.  From the physics perspective, $q=e^{2\pi i\hbar}$ where $\hbar$ is Planck's constant, and taking the limit $\hbar\to0$, that is $q\to1$, recovers the `classical theory'.  Reshetikhin and Turaev \cite{reshetikhin1} put Witten's construction on a more solid mathematical footing by rigorously constructing the Jones polynomial and more general quantum invariants, now called Reshetikhin--Turaev invariants, associated to colored ribbon graphs in $\R^3$ in terms of the representation theory of quantum groups, such as $\mathrm{U}_q(\mathfrak{sl}_2)$ or $\mathrm{U}_q(\mathfrak{sl}_n)$.  

A powerful property of the Jones polynomial is that it is completely determined by local relations called skein relations, which describe interactions associated to crossings.  The original skein relations applied to knots, secretly encapsulating the representation theory of $\SL_2$ or $\mathrm{U}_q(\mathfrak{sl}_2)$.  A knot, being an embedded circle, is homeomorphic to any finite connected $2$-valent graph.  Kuperberg \cite{kuperberg} discovered skein relations among $3$-valent graphs, called $3$-webs, underlying the quantum invariants for $\mathrm{U}_q(\mathfrak{sl}_3)$.  Sikora \cite{sikora1}, among others \cite{ckm, moy}, generalized these skein relations to $\mathrm{U}_q(\mathfrak{sl}_n)$ based on $n$-webs, which in Sikora's description are $n$-valent graphs, providing a diagrammatic framework for $\SL_n$ quantum invariants.  

\subsection{The dimer model}

On a graph $G$ a dimer cover or perfect matching is a pairing of the vertices into neighboring pairs; equivalently,   it is a set of edges such that each vertex is the endpoint of a single edge in the set. The dimer model is the study of   natural probability measures (such as the uniform measure) on the set of dimer covers $\Omega_1(G)$.  

A celebrated theorem of Kasteleyn \cite{Kasteleyn} says that one can count dimer covers of planar graphs using the determinant of a signed adjacency matrix,   the Kasteleyn matrix.  More generally, many probabilistic quantities of interest can be computed using determinantal methods.   From this starting point, the dimer model has been shown to have deep connections with many different areas of mathematics such as   combinatorics, probability, complex analysis \cite{Kenyon.conformal}, algebraic geometry \cite{KO2, KOS}, partial differential equations   \cite{CKP, KO2, KenyonPrause}, and integrable systems \cite{GoncharovKenyon}.

Recent works \cite{cohenetal, DKS, fll19, KenyonShi, KenyonOvenhouse, lam15, tata2024rankndimermodelssurfaces} study the $n$-dimer model   (an $n$-dimer cover is an overlay of $n$ dimer covers) and connect this dimer model with webs and representation theory.   In particular, the notion of trace of an $n$-web is very closely connected with the number of proper edge $n$-colorings of the web,   as defined and discussed in the next section. This setting provides for a natural quantization, which is the main goal of this paper. 

\subsection{The classical (\texorpdfstring{$q=1$}{q=1}) case}

(Before beginning, a friendly reminder to the reader that all of the terminology of the paper is properly defined in Sections \ref{sec:preliminaries} onward.  For the sake of being self-contained, some definitions are duplicated in part in this introduction.  Lastly, for recent related work, see \cite{GPPSS1, inoue2025, russell2026}.)

Fix a natural number $n\geq1$.  For a planar bipartite (ciliated) graph $G=(V=B\cup W,E)$, an \emph{$n$-multiweb} $m$ is a function $m:E\to\{0,1,2,\dots,n\}$ which sums to $n$ at each vertex: $\sum_{v\sim v'}m(vv')=n$ for all $v\in V$.  (Note the notion of an $n$-multiweb $m$ makes sense in any abstract graph $G$.  In \cite{fll19, lam15} $n$-multiwebs are called `weblike subgraphs'.)  By a standard result, see \cite{LovaszPlummer}, every $n$-multiweb on a bipartite graph can be obtained by overlaying $n$ single dimer covers, so $n$-multiwebs are also called \emph{$n$-dimer covers}. The set of $n$-multiwebs is denoted $\Omega_n$.  

An \emph{edge $n$-coloring} $c$ of $m$ is an assignment to each edge $e$ of a subset $c(e)\subset\{1,2,\dots,n\}$ such that   $|c(e)|=m(e)$ for all $e\in E$ and $\bigcup_{v\sim v'}c(vv')=\{1,2,\dots,n\}$ for all $v\in V$.  

Let $\Mat_n(\C)$ denote the set of $n\times n$ matrices. A \emph{$\Mat_n$-connection} $\Phi$ on $G$ is a function $\Phi:E\to\Mat_n(\mathbb{C})$.    (This is not the usual usage of the word `connection', which would require the matrices to be invertible.)

In \cite{DKS}, by a standard tensor network construction \cite{penrose69}     the trace $\mathrm{tr}(\Phi,m)$ of an $n$-multiweb $m$ with respect to a $\Mat_n$-connection $\Phi$ was defined, and it was shown that when $\Phi=I$ is the identity connection,   assigning the identity matrix to every edge, then (for positive ciliations) $\mathrm{tr}(I,m)$ equals the number of edge $n$-colorings of $m$.  

Let $N:=|B|=|W|$.  Associated to every $\Mat_n$-connection $\Phi$ is an $Nn\times Nn$ matrix $K(\Phi)$ called the Kasteleyn matrix,   which is a signed adjacency matrix of $G$ weighted by the connection $\Phi$.  Generalizing Kasteleyn's theorem in the case $n=1$,   as well as an analogous result due to Kenyon for $n=2$ \cite{Kenyon.double},  the main result of \cite{DKS} says that (for positive ciliations) up to a   global sign the determinant of $K(\Phi)$ equals the $n$-dimer partition function $Z(\Phi):=\sum_{m\in\Omega_n}\mathrm{tr}(\Phi,m)$ with respect to $\Phi$.    This was applied to study the probabilities of multiwebs on some simple surfaces, such as the annulus in the case $n=3$.    In \cite{KenyonOvenhouse}, this higher rank version of Kasteleyn's theorem was further generalized to the setting of mixed $n$-dimer covers,   and was applied to other related stat mech models such as square ice.   See also \cite{AEKOW} for further work on the higher rank dimer model.

\subsection{The \texorpdfstring{$q=q$}{q=q} case:  three dimensions}

(Throughout the paper, $n=n$ means $n\geq1$, and $q=q$ means $q\neq0$ excluding certain roots of unity depending on $n$.)  A bipartite ribbon graph $\mathbf{G}$ in $\R^3$ (we use boldface to denote objects in $\R^3$) is a compact oriented surface obtained by gluing   together disks and rectangles, where the disks, either black or white, are the `vertices' and the rectangles are the `edges'.    An $n$-web $\mathbf{W}$ in $\R^3$ is an $n$-valent bipartite ribbon graph (possibly with additional vertex-free loop components).    Sikora \cite{sikora2, sikora1} assigned to every (ciliated) $n$-web $\mathbf{W}$ a Laurent polynomial $\trq(\mathbf{W})$   in $q^{1/n}$ called the  \emph{quantum trace}, constructed as a specific Reshetikhin--Turaev invariant.  An $n$-multiweb $m$ in $\mathbf{G}$ determines, by splitting edges of weight $m(e)$ into $m(e)$ parallel edges, an $n$-web $\mathbf{W}_m$.  Slightly generalizing Sikora's construction, we define the quantum trace $\trq(m)$ of $m$ to be the quantum trace $\trq(\mathbf{W}_m)$ divided by $\prod_{e\in E}[m(e)]!$ where $[m(e)]!$ denotes the quantum factorial.  A priori, this is a rational expression in $q^\frac1n$.  

\begin{introtheo}
The quantum trace $\trq(m)$ of an $n$-multiweb $m$ in a bipartite (ciliated) ribbon graph $\mathbf{G}$ in $\R^3$ is a Laurent polynomial in $q^\frac{1}{n}$.
\end{introtheo}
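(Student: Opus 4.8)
The plan is to deduce the statement from the integrality of ordinary Reshetikhin--Turaev invariants by exhibiting the denominator $\prod_{e\in E}[m(e)]!$ as an honest divisor of $\trq(\mathbf{W}_m)$ in the Laurent polynomial ring $\Z[q^{\pm1/n}]$. Since $\trq(\mathbf{W}_m)$ is already a Laurent polynomial by Sikora's construction, it suffices to establish a factorization
\be
\trq(\mathbf{W}_m)=\Big(\prod_{e\in E}[m(e)]!\Big)\cdot F(m),
\ee
where $F(m)$ denotes the quantum trace of the \emph{fused} web obtained from $m$ by coloring each edge $e$ with the quantum exterior power $\Lambda_q^{m(e)}V$ of the standard representation $V=\C^n$ (rather than splitting it into $m(e)$ strands each colored by $V$). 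As an RT invariant of a genuinely colored ribbon graph, $F(m)$ is a Laurent polynomial in $q^{1/n}$, and the factorization then yields $\trq(m)=F(m)\in\Z[q^{\pm1/n}]$.

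The local input is the nature of the web vertices. After splitting, $\mathbf{W}_m$ is $n$-valent and each of its vertices is modeled on the totally antisymmetric invariant tensor in $V^{\otimes n}$, i.e.\ the quantum volume form spanning $\Lambda_q^n V\cong\C\hookrightarrow V^{\otimes n}$, taken in its natural \emph{unnormalized} form (without the $1/[n]!$ appearing in the associated Jones--Wenzl-type idempotent $a_n$). I would first record that these vertices factor through the exterior power on every sub-bundle of strands: the $k=m(e)$ strands of an edge, antisymmetrized at both endpoints, contract only through the summand $\Lambda_q^k V$ of $V^{\otimes k}$, every complementary idempotent being annihilated by the antisymmetric vertex. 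The point is that because the vertex tensors are unnormalized, contracting such a bundle equals $[k]!$ times the contraction carried out with the \emph{normalized} exterior-power (co)evaluations that define $F(m)$; this is precisely the relation $\sum_{\sigma\in S_k}c_\sigma(q)\,R_\sigma=[k]!\,a_k$ with Laurent-polynomial coefficients $c_\sigma(q)\in\Z[q^{\pm1/n}]$.

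With this in hand I would compute $\trq(\mathbf{W}_m)$ as a state sum over basis colorings of the strands and match it against the fused sum computing $F(m)$. On a weight-$k$ edge the split web ranges over ordered $k$-tuples of colors, distinctness being forced by the antisymmetric vertices, whereas the fused web ranges over the basis of $\Lambda_q^kV$ indexed by unordered $k$-subsets; summing the $q$-weights over the orderings of a fixed subset yields exactly $[k]!$. Taking the product over all edges produces the displayed factorization. At $q=1$ this specializes to the classical overcounting factor $\prod_e m(e)!$ relating ordered strand colorings to edge $n$-colorings, consistent with the identification of $\mathrm{tr}(I,m)$ with the number of edge $n$-colorings in \cite{DKS}.

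The main obstacle is pinning the factorization down with the \emph{exact} constant $\prod_e[m(e)]!$, rather than merely up to an invertible monomial in $q^{1/n}$. This requires simultaneous control of several conventions: the cyclic order at each vertex fixed by the ciliation, the braidings realizing the operators $R_\sigma$ together with the framing and writhe corrections they introduce, and the signs carried by the coefficients $c_\sigma(q)$. One must check that these corrections are absorbed consistently at both endpoints of every edge, so that the surviving contribution is $F(m)$ with coefficient precisely $\prod_e[m(e)]!$. Once this normalization is secured, the conclusion that $\trq(m)=F(m)$ is a Laurent polynomial is immediate.
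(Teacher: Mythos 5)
Your proposal is correct in substance, and it packages the argument differently from the paper. The paper never introduces exterior-power colorings or fused RT invariants; instead it extends Sikora's state-sum construction to ``stated webs'' cut along horizontal slits. Each edge $e$ with $m_e>1$ is isotoped to be vertical, a slit is inserted across it, and the sum over compatible states on the cut is evaluated directly: the antisymmetrizing vertex maps $T_\pm$ force the states on the $m_e$-strand bundle to be distinct, and summing the weights over orderings of a fixed subset produces the factor $\sum_{\sigma\in\mathfrak{S}_{m_e}}q^{2\ell(\sigma)}=q^{\binom{m_e}{2}}[m_e]!$, leaving a sum over increasing statings (i.e.\ over $m_e$-element subsets of $\{1,\dots,n\}$) of quantum traces of stated webs, each of which is Laurent in $q^{1/n}$. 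This is precisely your state-sum matching, with the sum over subsets playing the role of your basis of $\Lambda_q^{m_e}V$; the paper simply never needs to assert that these stated sums assemble into an RT invariant of a $\Lambda_q^{m_e}V$-colored graph. Your fusion formulation buys a cleaner conceptual output ($\trq(m)=F(m)$ is itself an honest colored RT invariant), at the cost of importing the colored RT machinery: Laurent-ness of invariants of $\Lambda_q^{k}V$-colored ribbon graphs and the identity expressing the unnormalized antisymmetrizer as $[k]!$ times the idempotent $a_k$. The paper's route stays entirely inside Sikora's $V$-colored construction.

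One correction worth making: the ``main obstacle'' you isolate --- pinning the factorization down with the exact constant $\prod_e[m_e]!$ rather than up to an invertible monomial in $q^{1/n}$ --- is not an obstacle for this statement. If $\trq(\mathbf{W}_m)=\pm q^{\alpha}\left(\prod_e[m_e]!\right)F(m)$ with $F(m)$ Laurent and $\alpha$ rational with denominator dividing $n$, then $\trq(m)=\pm q^{\alpha}F(m)$ is still a Laurent polynomial in $q^{1/n}$; units are harmless. Indeed the paper's own computation produces the extra monomial $\prod_e q^{\binom{m_e}{2}}$ (so divisibility by $\prod_e q^{\binom{m_e}{2}}[m_e]!$, which it notes matches an alternative normalization), and it explicitly ignores all cilia-dependent signs for exactly this reason. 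So your plan closes without resolving the normalization bookkeeping you flagged, provided you grant the two standard inputs above.
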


We define the quantum partition function $\Zq=\mathbf{Z}_{q,n}$ of $\mathbf{G}$ to be 
\be\label{Zqdef}
\Zq := q^{-N\binom{n}{2}}\sum_{m\in\Omega_n}\trq(m)
\ee
where the sum is over all $n$-multiwebs $m$ in $\mathbf{G}$.  (Recall that $N$ is half the number of vertices.)  When $n=1$ then $\trq(m)=+1$ for all $m$ and $q$, so $\Zq=Z_\mathrm{dimer}=\#\{\text{dimer covers of }\mathbf{G}\}$ is the classical dimer partition function.  It follows that the quantum model reduces to the classical model in the $n=1$ case.  While the topology of single dimer covers is not terribly interesting (but the statistics is!), the topology becomes much more interesting when $n\geq2$ since laying down multiple dimer covers yields web-like objects; here, the quantum invariant truly deforms the classical one.  

This definition of $\Zq$ is a special case of a more general definition valid for $\mathbf{G}$ in any oriented three manifold $M$, which we briefly describe now.    Sikora's $\SL_n$ skein relations allow one to define the \emph{$n$-skein space}, where an element is a formal linear combination of $n$-webs $\mathbf{W}$ modulo   the skein relations (similar to a homology theory where an element of homology is a formal linear combination of cycles modulo relations).    Then $\Zq$ can be defined in exactly the same way as in (\ref{Zqdef}) except that $\trq(m)$ is replaced by the skein   $\frac{1}{\prod_{e\in E}[m(e)]!} \left<\mathbf{W}_m\right>$ of the $n$-web $\mathbf{W}_m$ in the $n$-skein space   (and the sum is possibly a signed sum, depending on the topology of the manifold $M$).  The connection to the topologically trivial setting when $M=\R^3$ is that in   this setting the $n$-skein space is isomorphic to the complex numbers $\mathbb{C}$, and through this isomorphism the   skein $\frac{1}{\prod_{e\in E}[m(e)]!} \left<\mathbf{W}_m\right>$ becomes exactly the quantum trace $\trq(m)$.
  
\subsection{The \texorpdfstring{$q=q$}{q=q} case:  two dimensions}

For a planar bipartite (embedded ciliated) graph $G$, we provide quantum versions of the $q=1$ results of \cite{DKS, KenyonOvenhouse}.  We first define the notion of an \emph{(edge-commuting) $n$-quantum connection} $\Phi_q$, where an $n\times n$ quantum matrix of $q$-commuting variables is assigned to each edge of the graph.  For the constructions of the paper to be well-defined, in particular to deal with the ordering of the noncommuting variables, we assume the fairly restrictive condition that the quantum matrix entries from different edges commute.  Fix $\mathbb{A}$ to be the algebra generated by all these quantum matrix entries.  For every $n$-multiweb $m$ we define the quantum trace $\trqc(\Phi_q,m)$ of $m$ with respect to the $n$-quantum connection $\Phi_q$.  The associated quantum partition function is 
$$
\Zqc(\Phi_q):=q^{-N\binom{n}{2}}\sum_{m\in\Omega_n}\trqc(\Phi_q,m).
$$

Note by inserting $\R^2$ linearly into $\R^3$ that $G$ can naturally be considered as a ribbon graph $\mathbf{G}$.  Building on results of Sikora, we show that there exists an $n$-quantum connection $\Phi_q=I_q$ called the \emph{quantum identity connection}, which is unique up to diagonal gauge transformations, such that the two versions $\trq(m)=\trqc(I_q,m)$ coincide.  It follows that $\Zq=\Zqc(I_q)$ as well.  

Call a Laurent polynomial $L(q)$ \emph{symmetric} if $L(q)=L(q^{-1})$, and call a polynomial $P(q)$ \emph{palindromic} if there exists a nonnegative integer or half-integer $\alpha$ such that $q^{-\alpha} P(q)$ is symmetric.  By using this identification of the three dimensional and two dimensional quantum traces, together with an analysis of the ciliation data, we prove the following result.

\begin{introtheo}
The quantum trace $\trq(m)$ of an $n$-multiweb $m$ in a planar bipartite (embedded ciliated) graph $G$ in $\R^2$ is a palindromic polynomial in $q$.    The shifting exponent $\alpha$ is the same for all $m\in\Omega_n$ and is equal to $N\binom{n}{2}$.    In particular, $\Zq$ is a symmetric Laurent polynomial in $q$.    Moreover, for positive ciliations $\Zq^+:=\Zq$ is nonzero with nonnegative integer coefficients, and is independent of the choice of positive ciliation and planar embedding (so only depends on the combinatorial structure of the graph).
\end{introtheo}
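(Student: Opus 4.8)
The plan is to derive every assertion from the behavior of Sikora's quantum trace under the mirror involution of $\R^3$, together with a careful accounting of the framing contributed by the ciliation, and to use the state-sum coming from the two-dimensional identification $\trq(m)=\trqc(I_q,m)$ for the positivity statements. Throughout I treat $\mathbf{W}_m$ as a ribbon-graph diagram colored by the defining representation $V$ of $U_q(\mathfrak{sl}_n)$, with antisymmetrizer tensors placed at the $2N$ vertices. First I would record the elementary symmetry that taking the mirror image in $\R^3$ reverses every crossing of $\mathbf{W}_m$ and hence replaces the $R$-matrix by $R^{-1}$. Since the $R$-matrix of $U_q(\mathfrak{sl}_n)$ obeys $R(q)^{-1}=R(q^{-1})$, and the antisymmetrizer vertex tensors and the cup/cap maps transform compatibly, the Reshetikhin--Turaev formalism yields $\trq(m)(q^{-1})=\trq(m^\ast)(q)$, where $m^\ast$ denotes the mirrored multiweb.

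Next, because $G$ is planar, $\mathbf{W}_m$ is isotopic to a diagram with no crossings away from the vertices, so $m^\ast$ is isotopic to $m$ once the framing and ciliation are corrected; the discrepancy is a ribbon (twist) scalar, a power of $q$. The essential computation --- the ``analysis of the ciliation data'' --- is that a positive ciliation presents each vertex as a half-twist on its $n$ incident strands, carrying $\binom{n}{2}$ elementary crossings and hence contributing a scalar $q^{\binom{n}{2}/2}$, while mirroring inverts it. Summing over the $N$ black and $N$ white vertices shows $\trq(m)(q^{-1})=q^{-2N\binom{n}{2}}\trq(m)(q)$, so that $q^{-N\binom{n}{2}}\trq(m)$ is symmetric, with the shift $\alpha=N\binom{n}{2}$ independent of $m$. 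The same bookkeeping accounts for the only source of fractional powers $q^{1/n}$, namely the normalization of the $R$-matrix and the ribbon twist of $V$; across the balanced bipartite vertex structure these combine into integer powers, upgrading the Laurent polynomial in $q^{1/n}$ of Theorem~1 to a palindromic polynomial in $q$. Symmetry of $\Zq$ is then immediate from $\Zq=q^{-N\binom{n}{2}}\sum_{m}\trq(m)$.

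For positivity, integrality, and nonvanishing of $\Zq^+$ I would pass to the two-dimensional picture and expand the tensor contraction for $\trqc(I_q,m)$ in the standard basis of $V$. After dividing by $\prod_e[m(e)]!$ to undo the antisymmetrizer orderings, each surviving term is indexed by an edge $n$-coloring $c$ of $m$ and equals $\sigma(c)\,q^{\mathrm{inv}(c)}$ for a sign $\sigma(c)\in\{\pm1\}$ and a nonnegative crossing statistic $\mathrm{inv}(c)$. Specializing at $q=1$ recovers the classical trace, which for positive ciliations counts edge $n$-colorings each with weight $+1$; since there is exactly one term per coloring and each is $\pm1$ at $q=1$, all signs $\sigma(c)$ must be $+1$. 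Hence $\trq(m)=\sum_c q^{\mathrm{inv}(c)}$ has nonnegative integer coefficients and only nonnegative powers of $q$, and summing over $m$ gives the same for $\Zq^+$; nonvanishing follows because $\Zq^+(1)=\sum_m\#\{\text{colorings of }m\}>0$. Finally, independence of the positive ciliation and of the planar embedding follows from isotopy invariance of the invariant together with the fact that any two positive ciliations or planar presentations are related by vertex rotations and planar isotopies, under which $\trq(m)$ changes only by the uniform power $q^{\alpha}$ that is divided out in the definition of $\Zq^+$.

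I expect the principal obstacle to be the second step: proving that the per-vertex framing scalar is exactly $q^{\binom{n}{2}/2}$, so that the shift is precisely $N\binom{n}{2}$ and the $q^{1/n}$ powers clear. This is an orientation-sensitive computation of how a positive ciliation converts the cyclic datum at a vertex into a linearly ordered antisymmetrized tensor with a definite number of half-twists, and it is where the value $\binom{n}{2}$ must be extracted rather than assumed. The second delicate point is the sign analysis in the state sum: that $\sigma(c)=+1$ for every coloring under positive ciliations is a genuine combinatorial statement about how the antisymmetrizer signs interact with the crossing resolutions, and must be verified directly rather than read off from the construction.
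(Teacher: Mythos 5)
Your first two paragraphs are, despite the three-dimensional packaging, the same mechanism as the paper's proof of Theorem \ref{theo:symmetricZq}: the paper chooses a \emph{trivial} ciliation (Proposition \ref{id}), writes $\trq(m)=\trqc(I,m)=\sum_c\prod_v(-q)^{\ell(\sigma_v)}$, and applies the color-reversal involution $i\mapsto n+1-i$, which replaces $\ell(\sigma_v)$ by $\binom{n}{2}-\ell(\sigma_v)$ at each of the $2N$ vertices and gives $\trq(m)=q^{2N\binom{n}{2}}\mathbf{tr}_{q^{-1}}(m)$ at once; the paper itself observes (in the proof of Theorem \ref{theo:dependenceonplanarembedding}) that reflecting the diagram is equivalent to reversing the colors, so your mirror involution is this same symmetry seen topologically. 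Two of your ingredients are inaccurate but repairable: $R(q)^{-1}=R(q^{-1})$ is false as an identity of operators (it holds only after conjugating by the flip of tensor factors), and no analysis of $q^{1/n}$-cancellation across crossings is needed, since a planar diagram has no crossings at all — the combinatorial state sum over edge colorings involves only integer powers of $q$ from the start. Your positivity argument is exactly the paper's (Proposition \ref{prop:positivecilia} pins every sign in the state sum to $+1$ at $q=1$), with one caveat: for a positive but non-trivial ciliation the quantum identity connection has entries that are nontrivial powers of $q$, so your claim that each coloring contributes $q^{\mathrm{inv}(c)}$ with a \emph{nonnegative} exponent is not immediate; the paper proves the polynomiality for a trivial ciliation and transfers it using the sign-only cilia dependence of Proposition \ref{prop:ciliadependence}.

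The genuine gap is your last claim, independence of the planar embedding. It is not true that any two planar embeddings of $G$ are related by planar isotopies and vertex rotations: by Whitney's theorem this holds only for $3$-connected graphs (and then only up to global reflection, which is exactly the palindromy statement), while for a general $2$-connected graph one must also allow \emph{flips} — reflections of a subgraph across a separating pair of vertices, keeping the rest fixed. A flip is not an isotopy of ribbon graphs: as the paper emphasizes, rotating the component by $180$ degrees in $\R^3$ changes the ribbon structure, so isotopy invariance of $\trq$ cannot be invoked. This is precisely the hard case of Theorem \ref{theo:dependenceonplanarembedding}, and the paper needs a nontrivial argument to handle it: drag a vertex of the flipped component over the rest of the diagram by isotopy, remove each vertex kink so created via a skein relation costing a factor $q^{\pm\left(1+\frac{1}{n}\right)}$, conclude that the flip multiplies $\trq(m)$ by some $q^{\beta}$, and then kill $\beta$ by performing the return flip \emph{under} the axis, which yields $\trq(\mathbf{W}_m)=q^{2\beta}\trq(\mathbf{W}_m)$ and hence $\beta=0$ since $\trq(\mathbf{W}_m)\neq0$. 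Nothing in your proposal addresses this case. (A smaller error in the same sentence: a change of ciliation alters $\trq(m)$ by a sign, not by "the uniform power $q^{\alpha}$"; independence of the choice of positive ciliation follows because positivity at $q=1$ pins that sign to $+1$, not because the normalization $q^{-N\binom{n}{2}}$ absorbs anything.)
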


When $q=1$ and for general $n$, then $\mathbf{Z}_1^+=(Z_\mathrm{dimer})^n$ recovers the classical $n$-dimer partition function: see \cite{DKS}.

Proceeding to the last main result, assume for the moment that $G$ is simple: $G$ has no parallel edges.  Given Kasteleyn signs $\epsilon$ for $G$ (depending on the ciliation when $n$ is even) and an $n$-quantum connection $\Phi_q$, we define the \emph{$q$-Kasteleyn matrix} $K(\Phi_q)\in \Mat_N(\Mat_n(\mathbb{A}))$ by $K(\Phi_q)_{wb}:=\epsilon(bw)\Phi_q(bw)$.   Let  $\tilde{K}=\tilde{K}(\Phi_q)\in \Mat_{Nn}(\mathbb{A})$ be formed from $K(\Phi_q)$ in the obvious way by thinking of each entry $K(\Phi_q)_{bw}$ as an $n\times n$ block of elements of $\mathbb{A}$. This $\tilde{K}$ is not a quantum matrix despite the fact that the $\Phi_q(bw)$ are, so the quantum determinant of $\tilde K$ is not defined.   We define however the \emph{$q$-Kasteleyn determinant}  $\Kdetq(\Phi_q)\in\mathbb{A}$ by
$$
\Kdetq(\Phi_q):=q^{-N\binom{n}{2}}\sum_{\tilde{\sigma}\in\mathfrak{S}_{Nn}}(-1)^{\tilde{\sigma}}\left(\prod_{v\in V}q^{\ell(\sigma_v)}\right)\left(\prod_{e\in E}q^{\binom{m_e}{2}}q^{\ell(\sigma_e)}\right)\tilde{K}_{1\tilde{\sigma}(1)}\tilde{K}_{2\tilde{\sigma}(2)}\dots\tilde{K}_{Nn\tilde{\sigma}(Nn)}
$$
where the local permutations $\sigma_v$ and $\sigma_e$ are defined in Section \ref{sec:quantumkasteleynconnection}.  When $G$ is not simple, a slightly more general formula for $\Kdetq(\Phi_q)$ holds.  Note that $\Kdetq(\Phi_q)$ reduces to the classical Kasteleyn determinant formula when $q=1$.  

\begin{introtheo}\label{mainthm}
For a planar bipartite (embedded ciliated) graph $G$ in $\R^2$ one has  
\be\label{detK=Z}
\Zqc(\Phi_q)=\pm\Kdetq(\Phi_q).  
\ee
\end{introtheo}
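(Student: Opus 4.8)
The plan is to prove the identity by expanding the $q$-Kasteleyn determinant as a sum over $\tilde\sigma\in\mathfrak{S}_{Nn}$ and reorganizing it, via a combinatorial bijection, into the sum over $n$-multiwebs defining $\Zqc(\Phi_q)$, checking that the $q$-weights match group by group. The overall architecture will parallel the $q=1$ proof of \cite{DKS, KenyonOvenhouse}, promoting each enumerated combinatorial object there to a $q$-weighted object here.

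First I would set up the grouping of permutations. Each $\tilde\sigma\in\mathfrak{S}_{Nn}$ matches the $Nn$ white slots to the $Nn$ black slots; recording, for every edge $bw$, the number $m(bw)$ of white slots at $w$ sent to black slots at $b$ produces a function $m\colon E\to\{0,\dots,n\}$. Since $\tilde\sigma$ is a bijection and $\tilde K_{ij}=0$ unless slots $i,j$ lie on a common edge, the only nonzero contributions come from those $\tilde\sigma$ for which $m$ sums to $n$ at every vertex, i.e.\ from genuine $n$-multiwebs $m\in\Omega_n$. Fixing $m$, the residual data is exactly the local permutations of Section \ref{sec:quantumkasteleynconnection}: at each edge $e$ a permutation $\sigma_e\in\mathfrak{S}_{m(e)}$ reordering its $m(e)$ parallel strands, and at each vertex $v$ a permutation $\sigma_v$ recording how the $n$ slots at $v$ are distributed among the incident edges. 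This is precisely the block decomposition of $\tilde K$.

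Second I would isolate the global sign. As in classical Kasteleyn theory, planarity of $G$ lets one choose the Kasteleyn signs $\epsilon$ so that, for each fixed $m$, the product $(-1)^{\tilde\sigma}\prod_e\epsilon(e)^{m(e)}$ depends only on the combinatorial type of $m$ (and, when $n$ is even, the ciliation) and not on the local permutations, producing the single global sign $\pm$ in (\ref{detK=Z}). The remaining content is the matching of $q$-weights and matrix entries. The crux is to show that, for each fixed $m$,
\[
q^{-N\binom{n}{2}}\sum_{\{\sigma_v\},\{\sigma_e\}}\Big(\prod_{v}q^{\ell(\sigma_v)}\Big)\Big(\prod_{e}q^{\binom{m_e}{2}}q^{\ell(\sigma_e)}\Big)\,\tilde K_{1\tilde\sigma(1)}\cdots\tilde K_{Nn\,\tilde\sigma(Nn)}
\]
equals $q^{-N\binom{n}{2}}\trqc(\Phi_q,m)$ up to the global sign. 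Here the edge-local sum $q^{\binom{m_e}{2}}\sum_{\sigma_e}q^{\ell(\sigma_e)}$ evaluates to the quantum factorial $[m(e)]!$, which is exactly the normalization divided out in the definition $\trq(m)=\trq(\mathbf{W}_m)/\prod_e[m(e)]!$; meanwhile the vertex-local sum $\sum_{\sigma_v}q^{\ell(\sigma_v)}$, acting on the products of $\Phi_q$-entries, assembles the $q$-deformed antisymmetrizer that the Sikora/Reshetikhin--Turaev construction attaches to each $n$-valent black or white vertex. Identifying this assembled tensor contraction with the tensor-network definition of $\trqc(\Phi_q,m)$, and using the established identity $\trq(m)=\trqc(I_q,m)$ together with its extension to general $\Phi_q$, completes the match.

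The main obstacle I anticipate is precisely this third step, because $\tilde K$ is not a quantum matrix: its blocks $\Phi_q(bw)$ commute with one another, but the entries within a block only $q$-commute, so the ordered product $\tilde K_{1\tilde\sigma(1)}\cdots\tilde K_{Nn\,\tilde\sigma(Nn)}$ must be carefully reordered into the canonical order used by the tensor-network definition of the quantum trace. Tracking how this reordering of $q$-commuting variables generates exactly the local powers $q^{\ell(\sigma_v)}$ and $q^{\ell(\sigma_e)}$, and verifying that the resulting vertex contractions coincide with the quantum antisymmetrizers at all $q$ (and not merely at $q=1$, where the \cite{DKS} bijections already furnish the underlying sign and counting structure), is the delicate noncommutative bookkeeping at the heart of the theorem.
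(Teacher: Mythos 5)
Your global architecture---expand $\Kdetq(\Phi_q)$, group the permutations $\tilde{\sigma}\in\mathfrak{S}_{Nn}$ by the multiweb $m$ they project to, split the residual data into vertex permutations $\sigma_v$ and edge permutations $\sigma_e$, and defer the sign bookkeeping to the classical case of \cite{DKS}---is exactly the paper's. But there is a genuine gap in your third step, which is where all the content lies. You claim that the edge-local sum $q^{\binom{m_e}{2}}\sum_{\sigma_e}q^{\ell(\sigma_e)}$ ``evaluates to the quantum factorial $[m_e]!$'' and cancels the normalization in $\trq(m)=\trq(\mathbf{W}_m)/\prod_e[m_e]!$. This fails on two counts. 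First, the sum over $\sigma_e$ cannot be decoupled from the matrix entries: different $\sigma_e$ select different entries of $\tilde{K}$, so the correctly weighted edge-local sum (with the factor $(-1)^{\sigma_e}$ extracted from $(-1)^{\tilde{\sigma}}=(-1)^{\tilde{\sigma}_{0,c}}\prod_e(-1)^{\sigma_e}$) is
\[
\sum_{\sigma_e\in\mathfrak{S}_{m_e}}(-q)^{\ell(\sigma_e)}\prod_{s\in S_e}\left(\Phi_{q,S_e,T_e}\right)_{s,\sigma_e(s)}=\det_{q,S_e,T_e}(\Phi_q(e)),
\]
the quantum minor, not a factorial. (The factorial identity $\sum_{\sigma}q^{2\ell(\sigma)}=q^{\binom{m_e}{2}}[m_e]!$ does enter this circle of ideas, but on the \emph{trace} side, when one groups the colorings of the split graph $G_m$ inducing the same half-edge coloring of $m$; that is the content of Proposition \ref{prop:comparisonofqtraces}, and it never occurs inside the determinant expansion, where there is only \emph{one} permutation per edge rather than a pair of orderings at the two ends.) Second, your appeal to ``the established identity $\trq(m)=\trqc(I_q,m)$ together with its extension to general $\Phi_q$'' is vacuous: that identity concerns only the quantum identity connection, no extension to general $\Phi_q$ exists, and the three-dimensional trace $\trq$ plays no role in this theorem at all.

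What actually closes the argument---and what the paper does---is to match the inner sum not against the tensor-network definition directly but against the \emph{alternative} trace formula (\ref{alttrace}): by Proposition \ref{prop:comparisonofqtraces} (whose proof uses Lemma \ref{lem:qdet_orders}), one already knows $\trqc(\Phi_q,m)=\trqc^\prime(\Phi_q,m)=\left(\prod_e q^{\binom{m_e}{2}}\right)\sum_c\prod_v(-q)^{\ell(\sigma_v)}\prod_e\det_{q,S_e,T_e}(\Phi_q(e))$. After factoring out Kasteleyn signs, using edge-commutativity of $\Phi_q$, and performing the quantum-minor evaluation displayed above, the inner sum of the determinant expansion agrees with this formula term by term with identical powers of $q$, so only the signs remain, and those are the $q=1$ signs settled in \cite{DKS}. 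If you insist on matching directly to the tensor-network contraction, you must in effect reprove Proposition \ref{prop:comparisonofqtraces} inside your argument: reconcile the determinant's single permutation per edge with the split-web contraction's pair of color orderings per edge, which is precisely where $q^{\binom{m_e}{2}}[m_e]!$ appears and cancels the normalization. Your sketch conflates these two index sets, so as written the crucial identification does not go through.
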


As an example, listed below are the values of $\Zq^+=\mathbf{Z}_{q,n}^+(G)$ for $n=1,2,3,4$ for a square grid graph $G$.  (Computed, for instance, using the code linked in Appendix \ref{app:code}.)  Note that $Z_\mathrm{dimer}=36$ and $\mathbf{Z}_{1,n}^+(G)=36^n$ for each $n$ as required.  For more examples, see Section \ref{sec:examples}.  In parting, it is tempting to seek a conceptual (e.g. categorical \cite{sikoracateg, fendleykrushkal, khovanovcateg, kronheimermrowkacateg, laudaqueffelecrosecateg}) meaning for the natural number coefficients appearing in $\Zq^+$.  

\begin{center}
\includegraphics[height=1.33in]{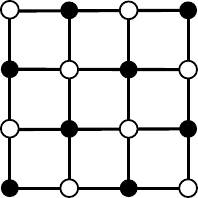}

\bigskip

$\mathbf{Z}_{q,1}^+(G)=36$, 

\bigskip

$\mathbf{Z}_{q,2}^+(G)=q^4+24q^3+145q^2+316q+324+316q^{-1}+145q^{-2}+24q^{-3}+q^{-4}$, 

\bigskip

$\mathbf{Z}_{q,3}^+(G)=34q^8+96q^7+526q^6+896q^5+2784q^4+3136q^3+6450q^2+5280q+8252+5280q^{-1}+6450q^{-2}+3136q^{-3}+2784q^{-4}+896q^{-5}+526q^{-6}+96q^{-7}+34q^{-8}$,

\bigskip

$\mathbf{Z}_{q,4}^+(G)=q^{16}+24q^{15}+144q^{14}+368q^{13}+1373q^{12}+2628q^{11}+7145q^{10}+11228q^9+24756q^8+33980q^7+60629q^6+73576q^5+110412q^4+120068q^3+155022q^2+151408q+174092+151408q^{-1}+155022q^{-2}+120068q^{-3}+110412q^{-4}+73576q^{-5}+60629q^{-6}+33980q^{-7}+24756q^{-8}+11228q^{-9}+7145q^{-10}+2628q^{-11}+1373q^{-12}+368q^{-13}+144q^{-14}+24q^{-15}+q^{-16}$.  
\end{center}

\subsection{Probability}

Theorem \ref{mainthm} is useful because it gives a `state sum' formula for $\Zq=\Zqc(I_q)$. Although computing  either side of (\ref{detK=Z}) is computationally intractable for large planar graphs, the statement nonetheless has real applications.

Consider for example the case $n=2$. A $2$-multiweb $m$ is a collection of simple loops and doubled edges.   In this case   (for positive ciliations) the trace $\trq(m)$ is simply $\trq(m)=q^N[2]^{L(m)}$ where $[2]:=q+\frac1q$ and $L(m)$ is the  number of loops in $m$, see Section \ref{sssec:probabilitiesn=2}. Consequently the quantum partition function (\ref{Zqdef}) for $n=2$ has a particularly nice form
$$
\Zq^+=\sum_{m\in\Omega_2} [2]^{L(m)}.
$$

One unforeseen application of the $n=2$ version of Theorem \ref{mainthm} is a new calculation of a classical quantity (which, to the authors' understanding, is the first computation of this kind for the instances discussed below):  the distribution of the number of  loops in the (classical) double dimer model. Indeed, for a bipartite planar graph with quantum identity connection, $\Zq^+$ is the partition function for the double dimer model with weight $[2]=q+\frac1q$ per loop.  By expanding $q$ near $1$, the `local' computation of $\Zq^+$ provided by Theorem \ref{mainthm} allows us to  in principle compute, using determinantal methods,  all moments of the distribution of the number of loops for the classical double dimer model (that is, the case $q=1$). 

We carry out this computation for the expected density $\rho$ of double dimer loops per vertex for the infinite honeycomb graph, giving an exact infinite series formula (\ref{rhosum}), which numerically is indistinguishable from $\rho=\frac1{27}$.  See Section \ref{loopdensity}. We conjecture that $\rho=\frac1{27}$ is indeed the exact value. 

A similar computation for the density of loops in the classical double dimer model on the graph $\Z^2$ gives density $\rho$ numerically indistinguishable from $\frac{1}{16}$.  We do not have any rigorous explanation for the apparent simplicity of these densities.

In the $n=n$ case, we define (see Section \ref{ssec:randomvariable}) a $\mathbb{Q}_{\geq0}$-valued random variable $X_n=X_n(m)$ using the quantum trace of the multiweb $m\in\Omega_n$ such that $X_2(m)=L(m)$, and satisfying other natural properties.  We call $X_n(m)$ the \emph{circulation} of the multiweb $m$.  In particular, the variable $X_n$ enjoys the property 
$$
\E(X_n)=\frac{1}{\mathbf{Z}_1^+}\left.\frac{d^2}{dq^2}\right|_{q=1}\Zq^+
$$
computing the expectation value of $X_n$ with respect to the natural probability measure for the $n$-dimer model (note the first derivative of $\Zq^+$ at $q=1$ vanishes by symmetry).  In the case $n=2$, this is the essential property underlying our computation of the expected density $\rho$ of loops discussed above.  

\section*{Acknowledgements}

We thank Tommaso Cremaschi, Vladimir Fock, Alexander Goncharov, Vijay Higgins, Andrew Neitzke, Adam Sikora, and David Speyer for helpful discussions.

\section{Preliminaries}
\label{sec:preliminaries}

\subsection{Graphs}

\begin{defi}\label{defi:graphassumptions}
By \emph{bipartite graph} (or just \emph{graph}) $G$ we mean a finite bipartite graph $G$ with vertex set $V=B\cup W$ and edge set $E$.  The graph $G$ is not assumed to be simple: multiple edges between vertices are allowed.  It is assumed that the vertices have been colored black and white, where the black vertices are denoted $b\in B$ and the white vertices $w\in W$.  The edges are oriented from the black vertices to the white vertices.  It is assumed that there are $N$ black vertices and $N$ white vertices, each set of which is labeled from $1$ to $N$ arbitrarily, so $B=\{b_1,b_2,\dots,b_N\}$ and $W=\{w_1,w_2,\dots,w_N\}$.  Edges are denoted $e\in E$ or by $e=bw$ when $G$ is assumed to be simple.  

A \emph{dimer cover} (or \emph{perfect matching}) of $G$ is subset of edges of $G$ such that each vertex of $G$ is the endpoint of exactly one of the edges in the subset.  Denote by $\Omega_1$ the set of dimer covers of $G$.  The \emph{dimer partition function} $Z_\mathrm{dimer}=|\Omega_1|$ counts the number of dimer covers of $G$.  Only graphs $G$ for which $\Omega_1$ is nonempty are considered.  

Without loss of generality, it may also be assumed that every edge of $G$ appears in some dimer cover.  Indeed, if an edge does not appear in any dimer cover, then it may be removed from $G$ without affecting the model (which is defined only in terms of dimer covers of $G$ and their constituent edges).  A particular consequence of this is that (the components of) $G$ may also be assumed to be (vertex) $2$-connected.  This is because if a vertex $v$ is a cut point of $G$, then at least one of the edges incident to $v$ will not appear in any dimer cover of $G$.  

A \emph{ciliation} $L$ of $G$ is the choice, for every vertex of $G$, of a linear ordering of the incident half-edges at that vertex.  A graph $G$ is \emph{ciliated} if it is equipped with a ciliation $L$.  
\end{defi}

\subsection{Ribbon graphs}

\begin{defi}

A \emph{bipartite ribbon graph} (or just \emph{ribbon graph}) $\mathbf{G}$ in $\R^3$ is, informally, an oriented surface-with-boundary obtained by gluing rectangles to black and white disks.  More precisely, $\mathbf{G}$ is a bipartite graph $G$ embedded in $\R^3$ and equipped with a \emph{framing}, namely a vector field orthogonal to edges away from vertices that extends continuously to the vertices.  The graph $G$ is the \emph{core} of the ribbon graph $\mathbf{G}$.  It is assumed that in a neighborhood of each vertex the incident half-edges are coplanar.  It follows that the framing is orthogonal to this plane at each vertex, so it makes sense to talk about the counterclockwise (CCW) or clockwise (CW) cyclic ordering of the half-edges at a vertex according to the right hand or left hand rule.  Choose the CCW (resp. CW) cyclic order at black (resp. white) vertices.  Additionally choosing, for every vertex, a preferred half-edge determines, in combination with the cyclic order for half-edges at that vertex, a corresponding linear order for these half-edges, and consequently determines a ciliation $L$ of $G$.  This is what is meant by a ciliation $L$ of $\mathbf{G}$.  (To be specific, say the preferred half-edge at each vertex comes first in the linear order.)  A ribbon graph $\mathbf{G}$ has \emph{blackboard framing} if the framing vector is constant in the upward vertical direction, that is, is the constant vector $(0,0,1)$.  Ribbon graphs $\mathbf{G}$ are considered up to \emph{isotopy} through the family of ribbon graphs.  Similarly, it makes sense to talk about ciliated ribbon graphs up to isotopy.  As usual, $\mathbf{G}$ is often conflated with its isotopy class.
\end{defi}

\begin{rema}
\begin{enumerate}
\item
One can go back and forth between the oriented surface interpretation of a ribbon graph and the graph-with-framing interpretation by, in the backward direction, taking a thin compact two dimensional neighborhood of the graph oriented such that the orientation normal vector points in the direction of the framing, and conversely.
\item
In practice, a ciliation $L$ of $\mathbf{G}$ is chosen by indicating a little hair-like cilia on the boundary of each vertex disk disjoint from the attaching half-edges (the preferred half-edge being the first half-edge appearing after the cilia in the cyclic order).
\item
A ribbon graph can always be isotoped, by introducing kinks, to have blackboard framing.
\end{enumerate}
\end{rema}

\begin{exam}
A perspective of a ciliated ribbon graph $\mathbf{G}_1$ is shown in Figure \ref{fig:ribbongraph}, left, where the positive $z$-direction points out of the page toward the eye of the reader.  The CCW linear order around the black vertex indicates that, outside of the right handed twist on the rightmost edge, $\mathbf{G}_1$ has the blackboard framing.  As shown in Figure \ref{fig:ribbongraph}, middle, $\mathbf{G}_1$ can be isotoped to a ciliated ribbon graph $\mathbf{G}_2$ with blackboard framing such that the right handed twist of $\mathbf{G}_1$ has been replaced by a positive kink.  
\end{exam}

\begin{defi}\label{defi:diagram}
The projection to $\R^2\cong\R^2\times\{0\}\subset\R^3$ by forgetting the third coordinate of a ribbon graph $\mathbf{G}$ with blackboard framing and in generic position, where the genericity can be achieved by arbitrarily small isotopy, determines a \emph{diagram} of $\mathbf{G}$ including the additional over/under information at each crossing.  See Figure \ref{fig:ribbongraph}, right.  Similarly, ciliated ribbon graphs have ciliated diagrams, where the half-edges at black and white vertices are linearly ordered CCW and CW.  Conversely, identifying the $x$- and $y$-axes of $\R^2$ with those of $\R^3$, a ciliated diagram determines a ciliated ribbon graph with blackboard framing up to isotopy.  (As usual, $\mathbf{G}$ is often conflated with its diagram.)
\end{defi}

\begin{rema}\label{rema:ribbongraphequalsciliatedribbongraph}
\begin{enumerate}
\item
Reidemeister's theorem for ciliated ribbon graphs \cite{fs22, sikora1} says that two ciliated ribbon graphs $\mathbf{G}_1$ and $\mathbf{G}_2$ are isotopic if and only if their diagrams are related by a sequence of ambient planar isotopies together with the ciliated framed Reidemeister moves, displayed in Figure \ref{fig:framedReidemeistermoves}.  In the first move, the vertex can be either black or white, and can go either over or under the strand, the over case being shown.
\item
Ciliated ribbon graphs will henceforth be displayed through their diagrams.  The edge orientations, always from black to white, might also be indicated on the diagrams for visual clarity.
\item\label{item:rema:ribbongraphequalsciliatedribbongraph}
Since all ribbon graphs appearing in this paper are ciliated, from now on a `ribbon graph' $\mathbf{G}$ means a `ciliated ribbon graph' $(\mathbf{G},L)$.  
\end{enumerate}
\end{rema}

\begin{figure}[t]
\centering
\includegraphics[height=.25\textheight]{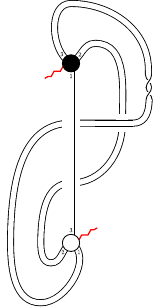}\hskip2cm\includegraphics[height=.25\textheight]{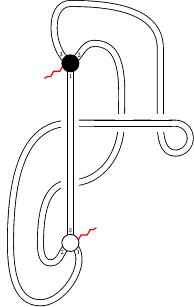}\hskip2cm\includegraphics[height=.25\textheight]{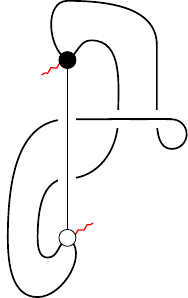}
\caption{Left: Ciliated ribbon graph with a right handed twist. Middle: Ciliated ribbon graph with a positive kink and the blackboard framing. Right: Diagram of a ciliated ribbon graph.}
\label{fig:ribbongraph}
\end{figure}

\begin{figure}[t]
\centering
\begin{minipage}{0.25\textheight}
\centering
\includegraphics[width=\linewidth]{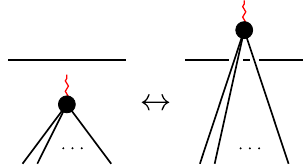}
\end{minipage}
\hspace{2cm}
\begin{minipage}{0.25\textheight}
\centering
\includegraphics[width=\linewidth]{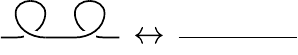}
\end{minipage}
\vspace{0.5cm}
\begin{minipage}{0.25\textheight}
\centering
\includegraphics[width=\linewidth]{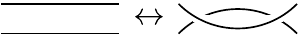}
\end{minipage}
\hspace{2cm}
\begin{minipage}{0.25\textheight}
\centering
\includegraphics[width=\linewidth]{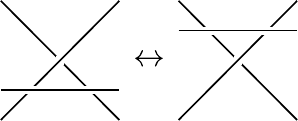}
\end{minipage}
\caption{Ciliated framed Reidemeister moves.}
\label{fig:framedReidemeistermoves}
\end{figure}

\subsection{Webs and traces}

\begin{defi}\label{defi:quantuminteger}
Throughout the paper, fix a nonzero complex number $q$.  It will be required that $q$ is not a $(2m)$-root of unity for $m=2,3,\dots,n$ but $q=\pm1$ is allowed.    For a natural number $k$ define the \emph{quantum natural number} (or \emph{quantum integer}) $[k]=\sum_{i=1}^k q^{-k-1+2i}=\frac{q^k-q^{-k}}{q-q^{-1}}$,   noting $[k]=k$ if $q=1$ and $[k]=(-1)^{k+1}k$ if $q=-1$.  Also put $[0]=0$.  Fix as well an $n$th root $q^\frac{1}{n}$.  

Later on, the following notions will also be required.  For a natural number $k$ the \emph{quantum factorial} is defined to be $[k]!=[k][k-1]\dots[2][1]$.  Also $[0]!=1$.  Similarly, the \emph{quantum binomial coefficient} is $\begin{bmatrix}m\\k\end{bmatrix}=\frac{[m]!}{[k]![m-k]!}$.

Throughout the paper, fix a natural number $n\in\{1,2,3,\dots\}$.  A \emph{proper $n$-web} (or just \emph{web}) $\mathbf{W}$ in $\R^3$ is the disjoint union of an $n$-valent ribbon graph (Remark \ref{rema:ribbongraphequalsciliatedribbongraph} (\ref{item:rema:ribbongraphequalsciliatedribbongraph})) and a (possibly empty) collection of embedded vertex-free framed loops.  This is in contrast to a ribbon graph $\mathbf{G}$, all of whose components have vertices.  There is also the \emph{empty web} $\emptyset$.
\end{defi}

\begin{rema}
\begin{enumerate}
\item
Here, the adjective `proper' refers to the fact that the core graph of the web $\mathbf{W}$ is $n$-valent.  This is as opposed to the combinatorial notion of a multiweb $m$, discussed in the next subsection.  To emphasize: in contrast to webs $\mathbf{W}$, which are $n$-valent, ribbon graphs $\mathbf{G}$ may have arbitrary valency, independent of $n$.
\item
The notions of isotopy, blackboard framing, and diagrams are defined for webs just as for ribbon graphs, and the Reidemeister moves for webs are also the same.  The empty web $\emptyset$ is the unique representative of its isotopy class.
\end{enumerate}
\end{rema}

\begin{exam}
There are ribbon graphs that are not webs, and webs that are not ribbon graphs.  But the ribbon graph depicted in Figure \ref{fig:ribbongraph} happens to be a $3$-web.  
\end{exam}

\begin{defi}
The \emph{quantum trace} $\trq(\mathbf{W})\in\mathbb{C}$ of a web $\mathbf{W}$ is defined by the following theorem of Sikora.
\end{defi}

\begin{theo}[{\cite{sikora1}}]\label{theo:qtraceofweb}
There exists a unique function $\trq$, which is a Laurent polynomial in $q^\frac{1}{n}$, from the set of webs to the complex numbers satisfying the following properties:
\begin{enumerate}
\item
$\trq(\mathbf{W})=\trq(\mathbf{W}^\prime)$ if $\mathbf{W}$ and $\mathbf{W}^\prime$ are isotopic.
\item
$\trq(\emptyset)=1$.
\item
$\trq(\mathbf{W}^\prime)=[n]\trq(\mathbf{W})$ when $\mathbf{W}^\prime$ is the disjoint union of $\mathbf{W}$ with a trivially framed unknot that is unlinked with $\mathbf{W}$.    
\item
$\trq$ satisfies the three local `skein relations' depicted in Figures \ref{fig:homflypt}, \ref{fig:framing}, and \ref{fig:determinant}.  Note that these relations involve the values of multiple webs.  Here, the webs agree outside a small neighborhood, inside which they differ as shown in the figures.  (And $\trq$ is being applied to each web in the linear combination.)
\end{enumerate}
\qed\end{theo}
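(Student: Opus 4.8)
The plan is to establish existence and uniqueness separately, following the Reshetikhin--Turaev paradigm: uniqueness comes from showing the four listed properties force the value of $\trq$ on any diagram, while existence comes from an explicit construction using the representation theory of $\mathrm{U}_q(\mathfrak{sl}_n)$, after which one checks the constructed invariant obeys the stated relations. I work throughout with diagrams, using the ciliated framed Reidemeister theorem (Remark \ref{rema:ribbongraphequalsciliatedribbongraph}) to pass between webs and diagrams.

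For uniqueness, I would fix any function $\trq$ satisfying (1)--(4) and argue by induction on a complexity measure of a diagram $D$ of $\mathbf{W}$, namely the pair (number of $n$-valent vertices, number of crossings) with the lexicographic order. The determinant relation (Figure \ref{fig:determinant}) is designed to resolve a vertex: it rewrites a configuration containing a black or white vertex as a linear combination of diagrams with strictly fewer vertices, ultimately expressing a black--white pair joined by $n$ strands as a $q$-signed sum over permutations of those strands. Since this step lowers the primary component of the complexity regardless of its effect on crossings, repeated application reduces $\mathbf{W}$ to a formal linear combination of framed links. On links the HOMFLYPT-type relation (Figure \ref{fig:homflypt}) switches any crossing at the cost of a lower-crossing smoothing term, so the standard descending-diagram argument reduces each link diagram to a disjoint union of unknots; the framing relation (Figure \ref{fig:framing}) removes the resulting kinks, and properties (2) and (3) evaluate each trivial component, yielding a power of $[n]$ times $\trq(\emptyset)=1$. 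Every coefficient along this reduction is fixed by the relations and is independent of the chosen function, so any two functions satisfying (1)--(4) agree; in particular no confluence is needed, only termination.

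For existence, I would construct $\trq$ as the Reshetikhin--Turaev functor for $\mathrm{U}_q(\mathfrak{sl}_n)$. Assign the defining representation $V=\C^n$ to each strand (with $V^*$ carried along reversed orientation), the $R$-matrix braiding to each crossing, the quantum (co)evaluation morphisms to caps and cups, and to each $n$-valent black (resp.\ white) vertex the canonical intertwiner $\C\cong\Lambda^n_q V\to V^{\otimes n}$ (resp.\ $V^{\otimes n}\to\Lambda^n_q V\cong\C$) given by the quantum antisymmetrizer, using the $\mathfrak{sl}_n$ identification $\Lambda^n V\cong\C$. The linear ordering of half-edges supplied by the ciliation is precisely the data needed to read the tensor factors at a vertex in a fixed order. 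Because the category of $\mathrm{U}_q(\mathfrak{sl}_n)$-modules is ribbon, the resulting scalar is invariant under the framed Reidemeister moves, giving (1); properties (2) and (3) follow since $\trq(\emptyset)=\mathrm{id}_\C$ and a trivially framed unknot evaluates to $\dim_q V=[n]$. The three skein relations are then verified as identities of intertwiners: the HOMFLYPT relation is the quadratic (Hecke) relation satisfied by the $R$-matrix on $V\otimes V$, the framing relation is the ribbon twist eigenvalue on $V$, and the determinant relation is the defining property of the antisymmetrizer, namely that composing the black and white vertex morphisms produces the quantum determinant $\sum_\sigma(-q)^{\ell(\sigma)}$ over the symmetric group.

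The main obstacle is the uniqueness reduction. One must exhibit a well-founded complexity measure that strictly decreases under each relation, and the subtlety is that the determinant relation lowers the vertex count but may alter the crossing structure, while crossing resolution manipulates crossings without touching vertices; fitting these into a single lexicographic ordering and checking compatibility is the technical heart, and the link-reduction substep needs the usual refinement from the HOMFLYPT well-definedness argument. On the existence side, the genuine content is verifying that the vertex intertwiner, $R$-matrix, and twist reproduce the exact normalizations of Figures \ref{fig:homflypt}--\ref{fig:determinant}, in particular the precise powers of $q^{\frac1n}$ coming from the determinant twist; these are concrete but convention-sensitive $\mathrm{U}_q(\mathfrak{sl}_n)$ computations, and they also supply the claim that $\trq(\mathbf{W})$ is a Laurent polynomial in $q^{\frac1n}$.
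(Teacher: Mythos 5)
Your proposal is correct and follows essentially the same route as the paper's source: the paper cites Sikora for this theorem, and Sikora's proof is exactly your two-part argument---existence via the Reshetikhin--Turaev construction with the defining representation $V=\C^n$, the $R$-matrix at crossings, and the quantum (co)determinant intertwiners $T_\pm$ at vertices (this is the very state-sum the paper reproduces in Section \ref{sssec:proofofLaurentproperty}), and uniqueness via skein reduction, using the source-sink relation to remove vertex pairs and the HOMFLYPT-type plus framing relations to evaluate the resulting framed links. Your observation that uniqueness needs only termination of the reduction, with consistency supplied by the existence construction, matches the standard logic of that proof.
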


\begin{rema}
\begin{enumerate}
\item
Note, in particular, that the properties described in the theorem determine the value of $\trq$ on the trivially framed unknot to be $[n]$. 
\item
In Figure \ref{fig:determinant}, the half-edges at the ciliated vertices are linearly ordered according to the cilia conventions, CCW at black vertices/sources and CW at white vertices/sinks. The permutations $\sigma\in \mathfrak{S}_n$ in the figure correspond to these linear orders.  Here $\ell(\sigma)$ is the length of the permutation (namely, the number of pairs $(i,j)$ with $i<j$ such that $\sigma(i)>\sigma(j)$, equivalently, the number of crossings in a minimal crossing diagram for $\sigma$).  Lastly, $\tilde{\sigma}_+$ indicates the positive braid lifting $\sigma$: for each crossing of strands, the strand coming up from the left crosses over the strand coming up from the right. 
\end{enumerate}
\end{rema}

\begin{figure}[t]
\centering
\includegraphics[width=.75\textwidth]{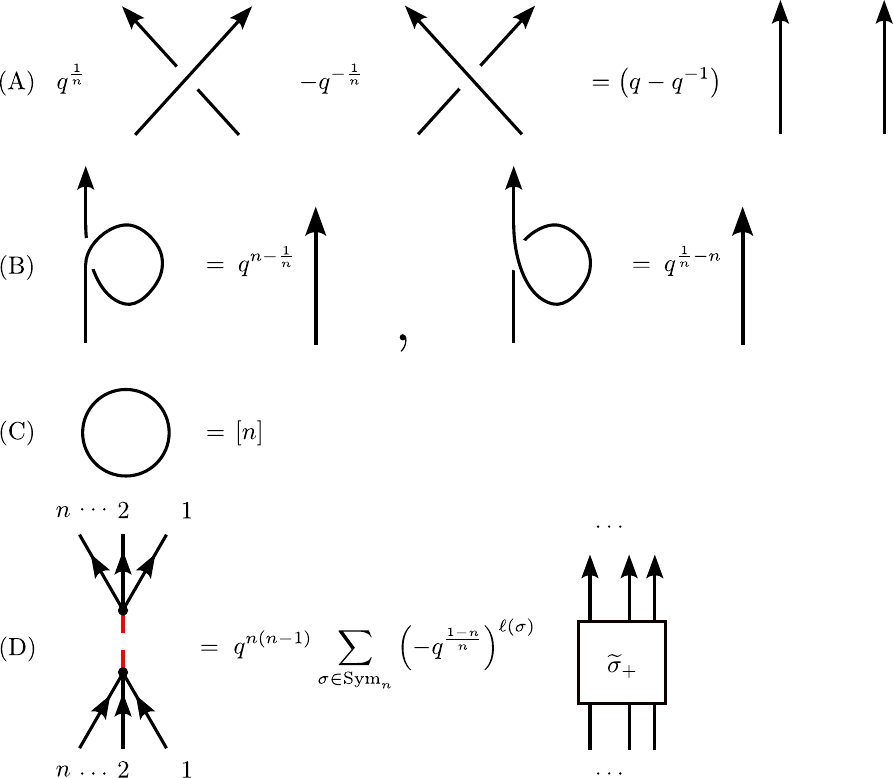}
\caption{Crossing change relation.}
\label{fig:homflypt}
\end{figure}

\begin{figure}[t]
\centering
\includegraphics[width=.25\textwidth]{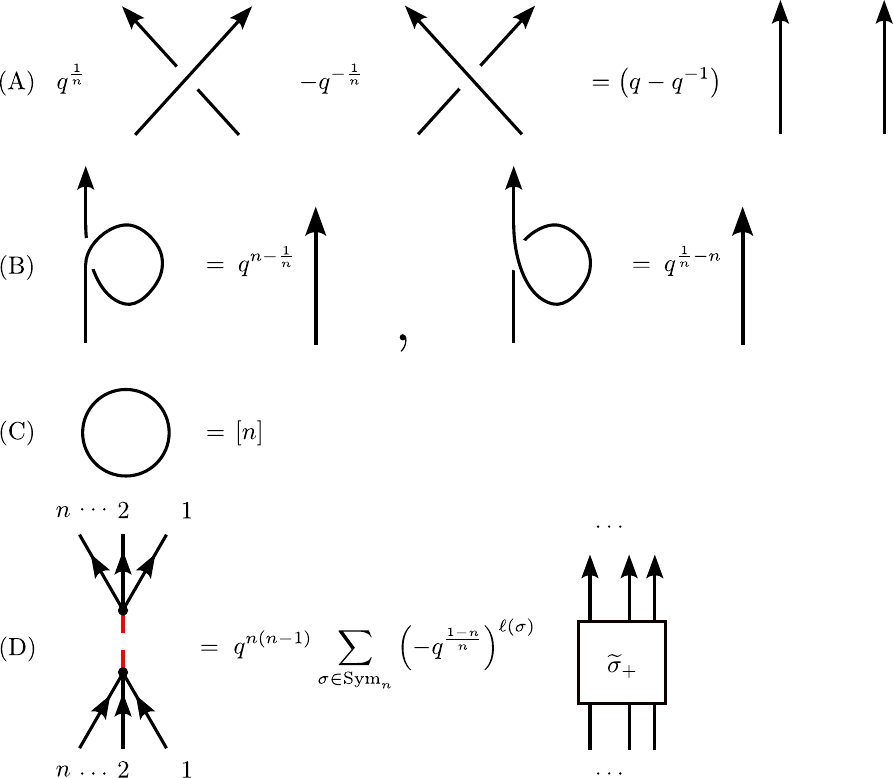}
\caption{Positive kink removing relation.}
\label{fig:framing}
\end{figure}

\begin{figure}[t]
\centering
\includegraphics[width=.75\textwidth]{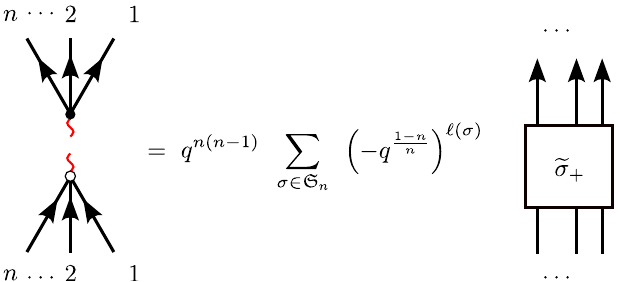}
\caption{Source-sink removing relation.}
\label{fig:determinant}
\end{figure}

\subsection{Multiwebs}

\begin{defi}
An \emph{$n$-multiweb} (or just \emph{multiweb}) $m$ in a graph $G$ is a function $m:E\to\{0,1,2,\dots,n\}$ such that for every vertex $v\in V$ the sum of $m(e)$ varying over edges $e\in E$ incident to $v$ equals $n$.  The notation $m_e$ instead of $m(e)$ for the edge multiplicity of a multiweb $m$ will be used.  An \emph{edge} of $m$ is an edge $e$ with $m_e>0$.  The set of multiwebs in $G$ is denoted $\Omega_n=\Omega_n(G)$.  

A multiweb $m$ is \emph{proper} if $m_e=0,1$ for all edges $e\in E$.  

Note that a dimer cover of $G$ is the same thing as a $1$-multiweb $m\in\Omega_1$.  The set $\Omega_n$ is nonempty exactly when $\Omega_1$ is nonempty \cite{LovaszPlummer}.

A multiweb $m$ in a ribbon graph $\mathbf{G}$ (Remark \ref{rema:ribbongraphequalsciliatedribbongraph} (\ref{item:rema:ribbongraphequalsciliatedribbongraph})) means a multiweb in its core $G$, and $\Omega_n$ is the set of such multiwebs.  A multiweb $m\in\Omega_n$ in $\mathbf{G}$ determines a corresponding \emph{split web} $\mathbf{W}_m$, which is the web in $\R^3$ obtained in the obvious way by deleting the edges $e$ of $\mathbf{G}$ where $m_e=0$, and splitting the edges of multiplicity $m_e>1$ into $m_e$ parallel copies.  Here, the ciliation $L$ of $\mathbf{G}$ restricts to a ciliation, also called $L$, of the split web $\mathbf{W}_m$ in the obvious way.  
\end{defi}

\begin{rema}
Imagining the ribbon graph $\mathbf{G}$ as comprised of actual ribbons and disks, the procedure to form the split web $\mathbf{W}_m$ from a multiweb $m$ would be to rip off the edges where $m_e=0$ and to make $m_e-1$ cuts along the edges of multiplicity $m_e>0$ to form $m_e$ parallel edges.  Note, in particular, that the cilia, attached to the disk boundaries away from the edges attachments, are untouched during this construction. 
\end{rema}

\begin{defi}\label{defi:firstquantumtraceofmultiweb}
The \emph{quantum trace} $\trq(m)\in\mathbb{C}$, actually depending on $q^\frac{1}{n}$, of a multiweb $m\in\Omega_n$ in a ribbon graph $\mathbf{G}$ is defined by
$$
\trq(m)=\frac{\trq(\mathbf{W}_m)}{\prod_{e\in E}[m_e]!}.
$$
Note, in particular, that the assumptions for $q$, see Definition \ref{defi:quantuminteger}, imply the denominator in the formula for $\trq(m)$ is nonzero.
\end{defi}

\begin{rema}\label{rema:afterdefof3dquantumtrace}
\begin{enumerate}
\item
Since the numerator $\trq(\mathbf{W}_m)$ is a Laurent polynomial in $q^\frac{1}{n}$ and the denominator $\prod_{e\in E}[m_e]!$ is a Laurent polynomial in $q$, the quantum trace $\trq(m)$ of a multiweb is, a priori, a rational expression in $q^\frac{1}{n}$.  Of course, when the multiweb is proper then the denominator is equal to $1$, so the quantum trace is Laurent in $q^\frac{1}{n}$.  
\item\label{item:rema:afterdefof3dquantumtrace}
There are other possibilities for the denominator normalization, such as $\prod_{e\in E}q^{\binom{m_e}{2}}[m_e]!$.  As it happens, our chosen denominator normalization is more suited to studying the probabilities of random multiwebs $m$ in the graph $\mathbf{G}$ in the planar setting.  
\end{enumerate}
\end{rema}

\begin{theo}\label{theo:polynomial}
For any ribbon graph $\mathbf{G}$ and multiweb $m\in\Omega_n$, the quantum trace $\trq(m)$ is a Laurent polynomial in $q^\frac{1}{n}$.
\end{theo}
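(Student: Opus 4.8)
The plan is to show that the denominator $\prod_{e\in E}[m_e]!$ divides the numerator $\trq(\mathbf{W}_m)$ in the ring $\C[q^{\pm 1/n}]$ of Laurent polynomials; since $\trq(\mathbf{W}_m)$ is Laurent by Theorem \ref{theo:qtraceofweb}, this yields the claim. The divisibility is produced by the $\SL_n$ quantum antisymmetrizer (Jones--Wenzl-type idempotent) $a_k$ acting on $k$ coherently oriented parallel strands. Recall that $a_k$ is idempotent and admits an explicit expansion $a_k=\frac{1}{[k]!}\sum_{\sigma\in\mathfrak{S}_k}c_\sigma\,\tilde\sigma_+$, where the $\tilde\sigma_+$ are positive braid lifts and the coefficients $c_\sigma$ (signs times powers of $q^{1/n}$) lie in $\Z[q^{\pm 1/n}]$; in particular $[k]!\,a_k$ is an integral Laurent combination of braids.

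First I would establish an \emph{absorption lemma}: at every $n$-valent vertex of the split web $\mathbf{W}_m$ the source (resp.\ sink) tensor is totally antisymmetric in its $n$ legs, this being precisely the content of the source-sink relation of Figure \ref{fig:determinant} (the quantum determinant, i.e.\ the generator of the top $q$-exterior power $\Lambda^n_q V\cong\C$). Consequently, for each edge $e$ the antisymmetrizer $a_{m_e}$ placed on the $m_e$ legs of $e$ entering a vertex is absorbed, leaving the web value unchanged. Inserting one such $a_{m_e}$ per edge therefore gives $\trq(\mathbf{W}_m)=\langle\mathbf{W}^a_m\rangle$, where $\mathbf{W}^a_m$ denotes $\mathbf{W}_m$ with an antisymmetrizer on each edge bundle.

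Next comes the extraction of the factor. Factoring each $a_{m_e}$ through $\Lambda^{m_e}_q V$ as $a_{m_e}=\iota_e p_e$ with $p_e\iota_e=\mathrm{id}$, the two antisymmetric vertex-ends of edge $e$ are contracted along its $m_e$ strands, and this self-pairing contributes exactly a factor $[m_e]!$. (The normalization is calibrated on the elementary case of a single edge of full multiplicity $m_e=n$ joining two $n$-valent vertices, where a direct contraction of the determinant tensor against its dual gives $\trq(\mathbf{W}_m)=[n]!\,q^{\bullet}$ and the quotient is the unit $q^{\bullet}$.) One thereby obtains $\langle\mathbf{W}^a_m\rangle=\big(\prod_{e\in E}[m_e]!\big)\,\langle\mathbf{W}^\Lambda_m\rangle$, in which $\langle\mathbf{W}^\Lambda_m\rangle$ is the unit-normalized web whose edge $e$ carries the irreducible representation $\Lambda^{m_e}_q V$ of $U_q(\mathfrak{sl}_n)$. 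Finally, $\langle\mathbf{W}^\Lambda_m\rangle$ is the Reshetikhin--Turaev invariant of this exterior-power-colored web, so by the general theory of colored quantum $\mathfrak{sl}_n$ invariants it is a Laurent polynomial in $q^{1/n}$; hence $\trq(m)=\langle\mathbf{W}^\Lambda_m\rangle$ is Laurent, as claimed.

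The main obstacle is the factor-extraction step: one must verify that the self-pairing along each edge produces exactly $[m_e]!$, with a genuinely Laurent (not merely rational) quotient, and that this holds for all edges simultaneously. This requires careful bookkeeping of the conventions attached to the determinant relation---the twist/framing factors $q^{\binom{m_e}{2}}$, the signs, the positive-braid convention defining $\tilde\sigma_+$, and the ciliation-induced linear orders at the vertices---so as to confirm that the $1/[m_e]!$ carried by each antisymmetrizer is cancelled precisely, neither over- nor under-counted. A secondary technical point is to justify the absorption lemma diagrammatically in the presence of crossings and nontrivial framing, reducing to the blackboard-framed case via the kink-removal relation of Figure \ref{fig:framing}.
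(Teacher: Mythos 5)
Your proposal is sound, but it reaches the divisibility through genuinely different machinery than the paper, so a comparison is worth recording. The paper's proof (Section \ref{sssec:proofofLaurentproperty}) never mentions antisymmetrizers or colored webs: it extends the Reshetikhin--Turaev state sum to stated webs cut along slits, slices each edge of multiplicity $m_e>1$, and computes directly from the formulas for $T_\pm$ that only states assigning distinct colors, equal on the two sides of the cut, survive, with the $m_e!$ orderings of a fixed color set contributing $\sum_{\sigma\in\mathfrak{S}_{m_e}}q^{2\ell(\sigma)}=q^{\binom{m_e}{2}}[m_e]!$ times the sorted-state term; iterating over edges exhibits $\trq(\mathbf{W}_m)$ as $\prod_e q^{\binom{m_e}{2}}[m_e]!$ times a finite sum of stated-web invariants, each Laurent in $q^{1/n}$ because every building block of Figure \ref{fig:xmastreechandelier} has Laurent entries. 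Your route is the categorical repackaging of exactly that computation: the sorted color sets $I=\{i_1<\dots<i_{m_e}\}$ index the standard basis $\{\omega_I\}$ of $\Lambda_q^{m_e}V$, and your per-edge ``self-pairing'' is precisely $\omega_I^*(\omega_I)=\sum_{\sigma}q^{2\ell(\sigma)}=q^{\binom{m_e}{2}}[m_e]!$ --- note the power of $q$, harmless for Laurentness but the reason the paper can record the stronger divisibility by $q^{\binom{m_e}{2}}[m_e]!$ (Remark \ref{rema:afterdefof3dquantumtrace}). Three cautions when you fill in your deferred verification. (i) The braid expansion of $[k]!\,a_k$ over $\Z[q^{\pm 1/n}]$ proves nothing by itself: inserting $[m_e]!\,a_{m_e}$ only shows that $\prod_e[m_e]!\cdot\trq(\mathbf{W}_m)$ is Laurent, which is the vacuous direction. (ii) With $p_e\iota_e=\mathrm{id}$ on the nose, the factorization $a_{m_e}=\iota_e p_e$ itself introduces no scalar; the factor $q^{\binom{m_e}{2}}[m_e]!$ arises instead from the normalization of the determinant tensors relative to the basis $\{\omega_I\}$, i.e.\ from the pairing just displayed --- this is the verification you flag, and it does work out. (iii) Your final appeal to ``the general theory of colored quantum $\mathfrak{sl}_n$ invariants'' is to a fact the paper does not develop; proving it by hand means expanding the colored invariant in the basis $\{\omega_I\}$, which is literally the paper's state sum. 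So your approach buys conceptual clarity --- it identifies multiwebs with exterior-power-colored webs, the standard representation-theoretic picture --- while the paper's buys self-containedness: nothing is needed beyond Theorem \ref{theo:qtraceofweb} and its explicit building blocks, and the two arguments coincide once your outsourced steps are unwound.
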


\begin{proof}
This is by a slight generalization of the construction of \cite{sikora1} valid for `stated webs'.  Details are provided in Section \ref{sssec:proofofLaurentproperty}.  
\end{proof}

\begin{prop}[\cite{sikora1}]\label{prop:ciliadependence}
For $n$ odd, the quantum trace $\trq(\mathbf{W})$ of a web $\mathbf{W}$ is independent of the ciliation $L$.  The same is true for traces $\trq(m)$ of multiwebs $m$.

For $n$ even, if webs $\mathbf{W}$ and $\mathbf{W}^\prime$ differ only in that a single cilium has been rotated by one `click', then $\trq(\mathbf{W})=-\trq(\mathbf{W}^\prime)$.  If multiwebs $m,m'$ differ only in that a single cilium has been rotated past one edge $e$, then $\trq(m)=(-1)^{m_e}\trq(m')$.  
\qed\end{prop}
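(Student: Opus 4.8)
The plan is to first reduce the multiweb statements to the web statements, and then to reduce the web statement to a local computation at a single vertex. For the multiweb trace, recall from Definition \ref{defi:firstquantumtraceofmultiweb} that $\trq(m)=\trq(\mathbf{W}_m)/\prod_{e\in E}[m_e]!$, and the denominator is manifestly independent of the ciliation $L$. Thus the ciliation dependence of $\trq(m)$ is identical to that of the web trace $\trq(\mathbf{W}_m)$ of the split web. Moreover, rotating a cilium at a vertex $v$ past one edge $e$ of the multiweb $m$ corresponds, in the split web $\mathbf{W}_m$, to rotating the cilium past the $m_e$ parallel strands into which $e$ was split. Consequently, once I show that a single cilium click in a web multiplies the quantum trace by $(-1)^{n-1}$, rotating past an edge $e$ multiplies $\trq(\mathbf{W}_m)$, hence $\trq(m)$, by $((-1)^{n-1})^{m_e}=(-1)^{(n-1)m_e}$. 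For $n$ odd this is $+1$, giving ciliation independence, and for $n$ even this is $(-1)^{m_e}$, exactly as claimed. (The web statement is the special case $m_e=1$.)

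It therefore suffices to prove that for a web $\mathbf{W}$, advancing a single cilium by one click multiplies $\trq(\mathbf{W})$ by $(-1)^{n-1}$. Since any two ciliations of $\mathbf{W}$ are related by a sequence of such clicks, and since the cilium lives on the boundary of a single vertex disk, this is a purely local statement about the $n$-valent vertex tensor. The cilium fixes the starting point of the linear order read off from the (fixed) cyclic order of the $n$ half-edges at the vertex; advancing it by one click cyclically permutes this linear order, moving the first half-edge to the last position. In the Reshetikhin--Turaev calculus underlying Sikora's construction, the vertex morphism is (a normalization of) the quantum antisymmetrizer $\omega\in V^{\otimes n}$ spanning the one-dimensional space of $\mathrm{U}_q(\mathfrak{sl}_n)$-invariants, and the cyclic relabeling is implemented geometrically by braiding the distinguished strand past the remaining $n-1$ strands, as encoded by the positive braid liftings $\tilde{\sigma}_+$ appearing in the source-sink relation of Figure \ref{fig:determinant}.

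The computation then rests on two standard facts. First, $\omega$ is a simultaneous eigenvector of each adjacent braiding $\check R_{i,i+1}$, with a common eigenvalue of the form $-q^{c}$, where $c$ is a convention-dependent exponent reflecting the $q^{1/n}$ normalization of the $\mathrm{U}_q(\mathfrak{sl}_n)$ braiding; braiding the distinguished strand past the remaining $n-1$ strands therefore contributes the scalar $(-q^{c})^{n-1}=(-1)^{n-1}q^{c(n-1)}$. Second, this braiding introduces a curl in the framing of the distinguished strand, and removing it via the kink relation of Figure \ref{fig:framing} contributes the inverse $q$-power $q^{-c(n-1)}$. Multiplying these, the $q$-powers cancel and the net scalar is exactly $(-1)^{n-1}$, as required.

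The main obstacle is precisely the bookkeeping in the last paragraph: one must verify that the $q$-power produced by the $n-1$ braidings is canceled exactly by the framing correction, so that no residual power of $q$ (or $q^{1/n}$) survives and the result is the bare sign $(-1)^{n-1}$. Keeping the conventions consistent — the orientation (black source versus white sink), the handedness of the braiding and of the kink, and the exact antisymmetric braiding eigenvalue — is where care is needed. A useful consistency check is that $n$ successive clicks return the original ciliation, and indeed $((-1)^{n-1})^n=(-1)^{n(n-1)}=1$.
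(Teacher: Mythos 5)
First, a point of comparison: the paper offers no proof of this proposition at all — it is quoted from \cite{sikora1} with an immediate \qed — so your proposal cannot be measured against an internal argument; it stands or falls on its own. Its architecture is sound and is essentially the standard (Sikora-style) argument: the reduction from multiwebs to webs is correct (the denominator $\prod_{e}[m_e]!$ in Definition \ref{defi:firstquantumtraceofmultiweb} is ciliation-independent, and rotating a cilium past $e$ is exactly $m_e$ clicks in the split web $\mathbf{W}_m$, since the split edges stay parallel and cilia never sit between them), and reducing the web statement to the local claim that one click scales $\trq(\mathbf{W})$ by $(-1)^{n-1}$ is also correct, since any two ciliations differ by clicks.

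The genuine soft spot is exactly where you flag it, and it is more than ``bookkeeping'': your second ``standard fact'' — that the curl created by the braiding is removed at the cost of precisely $q^{-c(n-1)}$ — is not automatic. The kink factor is a fixed constant of the theory, not defined to be the inverse of whatever the braidings produce, so the cancellation must be checked against actual conventions; as written, the step is asserted rather than proved. It does check out with the paper's conventions: from the displayed $R$-matrix one computes that the codeterminant is a simultaneous eigenvector of each adjacent braiding with eigenvalue $-q^{-1-\frac{1}{n}}$, so the $n-1$ crossings contribute $(-1)^{n-1}q^{-(n-\frac{1}{n})}$, while the positive-kink factor of Figure \ref{fig:framing} for the defining representation is $q^{\frac{n^2-1}{n}}=q^{n-\frac{1}{n}}$, and indeed $(1+\tfrac{1}{n})(n-1)=n-\tfrac{1}{n}$, so the powers cancel. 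Two further remarks. (i) Your closing consistency check can be promoted from a sanity check to the missing step: by locality and the eigenvector property, one click multiplies $\trq$ by a universal scalar of the form $\pm q^{\beta}$; since $n$ clicks return the original ciliated web, $(\pm q^{\beta})^{n}=1$, which at generic $q$ forces $\beta=0$, and evaluating at $q=1$ (where the vertex tensor is the classical antisymmetrizer, whose cyclic shift gives $(-1)^{n-1}$) pins the sign. This sidesteps the delicate matching of crossing handedness with curl sign that you correctly worry about. (ii) There is also a cleaner verification available inside the paper's own planar machinery (Theorem \ref{theo:sikoraconnection} and Proposition \ref{prop:tracefordiagconn}): rotating a cilium past an edge colored $i$ changes that edge's winding number by $\mp1$, hence its connection weight by $q^{\mp(n+1-2i)}$, while the induced cyclic shift of the vertex permutation changes $(-q)^{\ell(\sigma_v)}$ by $(-1)^{n-1}q^{\pm(n+1-2i)}$; the color-dependent powers cancel term by term, leaving $(-1)^{n-1}$. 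This exhibits concretely why the factor is a bare sign even though no single state is rescaled by a sign alone.
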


The following statement can be readily verified from either the skein relations (Theorem \ref{theo:qtraceofweb}) or the alternative state-sum construction (see e.g. Section \ref{sssec:proofofLaurentproperty}).

\begin{prop}\label{prop:multiplicative}
The quantum trace is multiplicative, in the following sense.  Given a multiweb $m$, assume $\mathbf{W}_m=\mathbf{W}_1\cup\mathbf{W}_2$ is a disjoint union of (groupings of connected) components $\mathbf{W}_1$ and $\mathbf{W}_2$ that are separable in $\R^3$, meaning they can be isotoped into separate balls.  Then $\trq(m)=\trq(m_1)\trq(m_2)$, where the multiwebs $m_i$ corresponding to the webs $\mathbf{W}_i$ are not multiwebs in $\mathbf{G}$ but in corresponding subgraphs $\mathbf{G}_i$.
\qed\end{prop}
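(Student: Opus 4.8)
The plan is to reduce the claim to the analogous multiplicativity statement at the level of honest webs, namely $\trq(\mathbf{W}_1\cup\mathbf{W}_2)=\trq(\mathbf{W}_1)\trq(\mathbf{W}_2)$, and then divide by the denominators. Writing $\mathbf{W}_i=\mathbf{W}_{m_i}$ for the split webs of the sub-multiwebs $m_i$ in the subgraphs $\mathbf{G}_i$, I first observe that the edge set $E$ of $G$ partitions as $E=E_1\sqcup E_2$ according to which subgraph an edge belongs to, and that the multiplicity $m_e$ agrees whether $e$ is viewed in $\mathbf{G}$ or in $\mathbf{G}_i$. Hence the normalizing denominator factors,
$$
\prod_{e\in E}[m_e]! \;=\; \Big(\prod_{e\in E_1}[m_e]!\Big)\Big(\prod_{e\in E_2}[m_e]!\Big),
$$
so that the multiweb statement $\trq(m)=\trq(m_1)\trq(m_2)$ follows immediately from the web statement together with Definition \ref{defi:firstquantumtraceofmultiweb}.

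For the web-level identity I would use the state-sum description of $\trq$ referenced in the proof of Theorem \ref{theo:polynomial} (see Section \ref{sssec:proofofLaurentproperty}). Using separability, first isotope $\mathbf{W}_1$ and $\mathbf{W}_2$ so that, in blackboard framing, $\mathbf{W}_1$ lies entirely to the left of $\mathbf{W}_2$ with disjoint supports; by isotopy invariance (Theorem \ref{theo:qtraceofweb}(1)) this changes neither quantum trace. The state sum assigns to the diagram of $\mathbf{W}_1\cup\mathbf{W}_2$ a sum over states (edge labelings) of a product of local weights attached to crossings, vertices, and critical points of the height function. Because the two diagrams occupy disjoint regions, a state on the union is exactly a pair consisting of a state on $\mathbf{W}_1$ and a state on $\mathbf{W}_2$, and the global weight is the product of the two weights. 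Therefore the sum factors as a product of sums, giving $\trq(\mathbf{W}_1\cup\mathbf{W}_2)=\trq(\mathbf{W}_1)\trq(\mathbf{W}_2)$.

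Equivalently --- and this is the conceptual content --- the construction of $\trq$ is that of a Reshetikhin--Turaev invariant, i.e.\ the evaluation of a monoidal (ribbon) functor into the category of $\mathrm{U}_q(\mathfrak{sl}_n)$-modules, under which horizontal disjoint union of diagrams corresponds to tensor product of morphisms. Since $\mathbf{W}_1$ and $\mathbf{W}_2$ are closed (all vertices are present and there are no free boundary points), each evaluates to a morphism in $\mathrm{End}(\mathbf{1})\cong\C$, namely the scalars $\trq(\mathbf{W}_i)$; under the identification $\mathbf{1}\otimes\mathbf{1}\cong\mathbf{1}$ the tensor product of these two scalars is their product. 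I expect the only point requiring care to be the bookkeeping that ``separable'' genuinely allows one to realize the union as a side-by-side (tensor) composition rather than an interleaved one --- that is, that one may choose a height function simultaneously generic for both pieces with horizontally disjoint supports --- after which the factorization is immediate, and the ciliation plays no role since each $\mathbf{G}_i$ simply inherits its ciliation from $\mathbf{G}$.
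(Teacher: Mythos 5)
Your proposal is correct and follows one of the two routes the paper itself indicates: the paper gives no detailed argument, stating only that the proposition ``can be readily verified from either the skein relations (Theorem \ref{theo:qtraceofweb}) or the alternative state-sum construction,'' and your factorization of the Reshetikhin--Turaev state sum over horizontally separated diagrams (plus the observation that the denominators $\prod_e [m_e]!$ factor, edges with $m_e=0$ contributing $[0]!=1$) is exactly a careful write-up of the state-sum route. No gaps; the only point needing the care you already flag is putting the two closed diagrams in simultaneous good position, which isotopy invariance handles.
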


\begin{defi}
For any ribbon graph $\mathbf{G}$, the  \emph{quantum partition function} $\Zq\in\mathbb{C}$ is the Laurent polynomial in $q^\frac{1}{n}$ defined by
$$
\Zq=q^{-N\binom{n}{2}}\sum_{m\in\Omega_n} \trq(m).
$$
\end{defi}

\begin{rema}\label{rema:afterfirstdefofqpartitionfunction}
\begin{enumerate}
\item
Note $\Zq$ is independent of isotopy of $\mathbf{G}$.  It does however depend on the ciliation $L$ of $\mathbf{G}$ when $n$ is even, by Proposition \ref{prop:ciliadependence}.  
\item
The normalization factor $q^{-N\binom{n}{2}}$, depending on $n$ and half the number of vertices $N$, is chosen to simplify calculations later on.  (See also \cite[Section 11.2]{LESIKORA}).
\item
The proof of Theorem \ref{theo:qtraceofweb} constructs the quantum trace $\trq(\mathbf{W})$ of a web $\mathbf{W}$ as a certain Reshetikhin--Turaev invariant \cite{reshetikhin1}, as briefly discussed in Section \ref{sec:intro}. In particular, this is the case for $\trq(m)$ as well when $m$ is a proper multiweb.  Motivated by Theorem \ref{theo:polynomial}, the authors expect that it should not be too difficult to show that the quantum partition function $\Zq$ can also be formulated as a Reshetikhin--Turaev invariant.  
\item\label{item:rema:afterfirstdefofqpartitionfunction}
When $n=1$, it is not hard to show from Theorem \ref{theo:qtraceofweb} that $\trq(m)=+1$ for every dimer cover $m\in\Omega_1$ and for all $q$.  Consequently, $\Zq=Z_\mathrm{dimer}$ is the classical dimer partition function of $G$, namely, the number of dimer covers of $G$.  In particular, the quantum model reduces to the classical model in this case.  Strong connections will be made to the $n$-dimer model for $n>1$, in the planar setting, in Section \ref{sec:planarsetting}.
\item  
For any oriented $3$-manifold $M$, the notions of ribbon graphs $\mathbf{G}$ and webs $\mathbf{W}$ in $M$ are defined in the same way as in $\R^3$.  Sikora \cite{sikora1} defined the `$n$-skein space' as the quotient of the free complex vector space on the set of isotopy classes of webs in $M$ by the local skein relations depicted in Theorem \ref{theo:qtraceofweb}.  The $n$-skein space is a quantum deformation of the $\SL_n$ character variety of $M$, see for instance \cite[Corollary 20]{sikora1}.  Generalizing the above construction, a quantum invariant $\Zq$ can be assigned to any ribbon graph $\mathbf{G}$ in $M$ by replacing each appearance of $\trq(\mathbf{W}_m)$ within $\trq(m)$ in the above formula for $\Zq$ with the class $\left<\mathbf{W}_m\right>$ of $\mathbf{W}_m$ in the skein space  (and possibly including signs in the sum, depending on the topology of the manifold $M$).    Note here that the coefficients of the terms $\left<\mathbf{W}_m\right>$ appearing in $\Zq$ remain rational in $q$.  (See \cite{cremaschi1} for a study of the $n$-skein space of the annulus, connecting to the quantum cluster geometry of Fock--Goncharov \cite{FG}.)
\end{enumerate}
\end{rema}

\subsubsection{Proof of Theorem \ref{theo:polynomial}}\label{sssec:proofofLaurentproperty}

First, the construction of $\trq(\mathbf{W})$ as a Reshetikhin--Turaev invariant is explained.  Actually, a slight generalization, valid for `stated webs' $(\mathbf{W},s)$, is required for the following argument.

A \emph{slit collection} $H\subset\R^2$ is a (possibly empty) finite collection of disjoint horizontal segments, thought of as `cuts' in the plane.  A \emph{web (with boundary) $\mathbf{W}$ relative to the slit collection $H$} is defined exactly as before, except $\mathbf{W}$ is allowed to have monovalent vertices (possibly none, and with distinct $xy$-coordinates) ending on $H\times\R$, where the blackboard framing is required.  The monovalent vertices are `pinned' in the sense that they are not allowed to move during isotopies.  Web diagrams $D$ drawn in $(\R^2,H)$ are defined as before.  

A \emph{strip $R\subset\R^2$ relative to $H$} is an infinite horizontal strip in $\R^2$ of the form $R=\R\times[y_1,y_2]$ with $y_1<y_2$ whose interior contains no slits of $H$, but whose boundary is allowed to contain slits (possibly none).  Given a web diagram $D$, a \emph{strip system $\{R_i\}_{i=1,2,\dots,t}$ relative to $H$ (and $D$)} consists of finitely many strips $R_i=\R\times[y_i,y_{i+1}]$ such that $D\subset\cup_{i=1}^t R_i$.  It is always assumed that diagrams $D$ are in generic position with respect to the strip system (that is, they intersect the boundaries of each $R_i$ transversely).

A component of the restriction $D|_{R_i}$ of the diagram to the $i$-th strip $R_i$ is a \emph{building block} if it is one of the local diagrams displayed in Figure \ref{fig:xmastreechandelier}.  The diagram $D$ is in \emph{good position relative to the strip system $\{R_i\}_{i=1,2,\dots,t}$} if for each $i$ the components of the restriction $D|_{R_i}$ are either (1) building blocks, at most one of which is not a vertical strand (the first two diagrams in Figure \ref{fig:xmastreechandelier}); or, (2) isolated points lying on the strip boundary intersect $H$ (these are, in particular, monovalent vertices from boundary-ending strands in adjacent strips).  It is always assumed that diagrams $D$ are in good position with respect to the strip system.  

A point in $D\cap H$ is a \emph{slit point}.  Every slit point is in some $\R\times\{y_i\}$.  If a slit point is an isolated point of $D|_{R_i}$ it is \emph{excludable relative to $R_i$}, else it is \emph{includable relative to $R_i$}.   Note a slit point is always includable in one of its two adjacent strips (in which case it is part of a building block) and excludable in the other.  Points of $D\cap\partial R_i$ that are not slit points are also considered \emph{includable relative to $R_i$}.  

Let $V=\mathbb{C}^n$ with standard basis $\{x_j\}$ and $V^*=\{V\to\mathbb{C}\}$ its linear dual space with standard basis $\{x^*_j\}$.  The intersection points of the diagram $D$ with the strip boundary components $\R\times\{y_i\}$ consist of finitely many strands oriented either down or up (with respect to the second coordinate).  For each $i=1,2,\dots,t$, to every downward directed includable (relative to $R_i$) intersection point in $\R\times\{y_i\}$ or $\R\times\{y_{i+1}\}$ attach a copy of $V$, and to every upward directed includable point attach $V^*$. If $V_{i,1}, V_{i,2}, \dots, V_{i,r_i}$ are the spaces so-attached to the strip boundary component $\R\times\{y_i\}$ from left to right, and $V^\prime_{i,1}, V^\prime_{i,2}, \dots, V^\prime_{i,r^\prime_i}$ the spaces for $\R\times\{y_{i+1}\}$, put $V_i:=V_{i,1}\otimes V_{i,2}\otimes\dots\otimes V_{i,r_i}$ and $V^\prime_i:=V^\prime_{i,1}\otimes V^\prime_{i,2}\otimes\dots\otimes V^\prime_{i,r^\prime_i}$.  (If $D$ does not intersect a strip boundary component, the space $\mathbb{C}$ is attached.)

Each diagram restriction $D|_{R_i}$ defines a linear map $\varphi_i:V_i\to V^\prime_{i}$ as follows.  The linear maps for building blocks are displayed in Figure \ref{fig:xmastreechandelier}.  Here $I_V$ and $I_{V^*}$ are identity maps, 
$$
R(x_i\otimes x_j) = q^{-\frac{1}{n}}
\begin{cases}
x_j \otimes x_i, & i>j \\
q  x_i \otimes x_j, & i=j \\
x_j \otimes x_i +(q-q^{-1})x_i\otimes x_j, & i<j
\end{cases},
$$
$$
T_+(1) = \sum_{\sigma \in \mathfrak{S}_n} (-q)^{\ell(\sigma)}x_{\sigma(1)}\otimes x_{\sigma(2)}\otimes \dots \otimes x_{\sigma(n)},
$$
$$
T_-(x_{i_1}\otimes x_{i_2}\otimes\dots \otimes x_{i_n}) = 
\begin{cases}
(-q)^{\ell(\sigma)}, & (1,2,\ldots,n) \mapsto (i_1,i_2,\ldots,i_n) \text{ defines a permutation}\\
0, & \text{otherwise}
\end{cases}.
$$ 
If $D|_{R_i}$ is not a building block, then it is the disjoint union of a building block with some oriented vertical strands (plus isolated points that are being systematically ignored), and the linear map $\varphi_i$ is defined in the obvious way by tensoring together the linear map for the building block with the identity maps for the vertical strands.  

A \emph{stated web $(\mathbf{W},s)$ relative to $H$} is a web together with the assignment of numbers $s(p)\in\{1,2,\dots,n\}$, called \emph{states}, to the monovalent vertices $p$ of $\mathbf{W}$ lying on $H\times\R$.  Here $s:\partial \mathbf{W}\to\{1,2,\dots,n\}$ is called the \emph{state function}.  Notions of isotopy and diagrams of stated webs are the same as for webs.  In particular, $(D,s)$ is the associated stated diagram.  

For $i=1,2,\dots,t$, a \emph{state function $s_i$ for the restriction $D|_{R_i}$} assigns to each includable boundary point $p$ a state $s_i(p)\in\{1,2,\dots,n\}$.  The pair $(D|_{R_i},s_i)$ is called a \emph{stated diagram restriction}.  Also write $s_i=(s_i^H,s_i^{H^c})$ where $s_i^H$ is the restriction of the state function $s_i$ to the includable slit points, and $s_i^{H^c}$ is the restriction to the nonslit includable points.  

Given a stated restriction $(D|_{R_i},s_i)$ consider the linear map $\varphi_i:V_i\to V^\prime_i$ for $D|_{R_i}$ defined above.  Let $v(s_i)\in V_i$ be the standard basis tensor whose $j$-th factor, corresponding to the $j$-th includable boundary point $p_j$ on $\R\times\{y_i\}$ measured from left to right, is the standard basis element $x_{s_i(p_j)}$ or $x^*_{s_i(p_j)}$ (depending on whether $V_{i,j}$ is $V$ or $V^*$).  And let $v^\prime(s_i)\in V^\prime_i$ be the tensor determined in an identical way for the boundary component $\R\times\{y_{i+1}\}$.  Define $\varphi_i(s_i)\in\mathbb{C}$ to be the coefficient of the basis element $v^\prime(s_i)$ in the image $\varphi_i(v(s_i))\in V^\prime_i$.  

Note there is a natural one-to-one correspondence between the nonslit includable points $p^\prime_{j^\prime}$ of $\R\times\{y_{i+1}\}$ relative to $D|_{R_i}$ and those $p_{j(j^\prime)}$ of $\R\times\{y_{i+1}\}$ relative to $D|_{R_{i+1}}$ for determined $j=j(j^\prime)$.  Call stated restrictions $(D|_{R_i},s_i)$ and $(D|_{R_{i+1}},s_{i+1})$ \emph{compatible} if $s_i^{H^c}(p^\prime_{j^\prime})=s_{i+1}^{H^c}(p_{j(j^\prime)})$ for all such $j^\prime$.  A collection of stated diagrams $\{(D|_{R_i},s_i)\}_{i=1,2,\dots,t}$ is called \emph{compatible} if all pairs of adjacent stated diagrams are compatible.  

The stated diagram $(D,s)$ for $(\mathbf{W},s)$ determines states $s_i^H$ for the includable slit points for the restrictions $D|_{R_i}$ for all $i=1,2,\dots,t$ (but does not determine states $s_i^{H^c}$).  Define 
$$
\trq(\mathbf{W},s):=\sum_{\text{compatible }s_1,s_2,\dots,s_t}\prod_{i=1}^t \varphi_i(s_i)\in\mathbb{C}
$$ 
summed over all compatible states $s_i=(s_i^H,s_i^{H^c})$ of the restrictions $D|_{R_i}$ such that the fixed states $s_i^H$ for includable slit points are those determined by the stated diagram $(D,s)$.  

When $H,s=\emptyset$, this is the definition of $\trq(\mathbf{W}):=\trq(\mathbf{W},\emptyset)$ given in \cite{sikora1}.  The isotopy invariance of $\trq(\mathbf{W})$ in the closed case (Theorem \ref{theo:qtraceofweb}) is equivalent to $\trq(\mathbf{W})$ satisfying the Reidemeister moves (Figure \ref{fig:framedReidemeistermoves}), which are local.  It immediately follows in the more general open setting that $\trq(\mathbf{W},s)$ is isotopy invariant with respect to $H$, as isotopies of $\trq(\mathbf{W},s)$ are not allowed to cross the slits of $H$.  (It is not hard to see that $\trq(\mathbf{W},s)$, so defined, is not in general invariant upon isotoping across slits.)  Equivalently, this isotopy invariance means that $\trq(\mathbf{W},s)$ is independent of the choice of good position relative to a strip system.  

Moving ahead, let $h$ be a slit not in $H$.  There is an operation that takes a web $\mathbf{W}$ relative $H$ and `cuts' it, by introducing the slit $h$, to give a new web $\mathbf{W}^h$ relative to $H\cup h$.  (To avoid overlapping monovalent vertices, the part of $\mathbf{W}^h$ just below (resp. above) the slit $h$ can be perturbed slightly to the left (resp. right), say.)  The resulting web $\mathbf{W}^h$ is then considered up to isotopy with respect to $H\cup h$.  Note that this operation is only defined on webs $\mathbf{W}$, not on their isotopy classes relative to $H$ (for such an isotopy could inadvertently cross $h$).    

Note also this cutting operation is not defined on stated webs $(\mathbf{W},s)$.  However, if additional states $s^h=(s_l^h, s_u^h)$ are chosen, assigned to the lower and upper boundary points of $\mathbf{W}^h$ created after cutting $\mathbf{W}$ along the intersection $\mathbf{W}\cap (h\times\R)$, then one obtains a stated web $(\mathbf{W}^h,s\cup s^h)$.  Call the stating $s^h$ of $(\mathbf{W}^h,s\cup s^h)$ \emph{compatible} if the states $s_l^h$ and $s_u^h$ agree on pairs of corresponding boundary points coming from cutting along $h$. Then, essentially by definition (say, by refining the strip system to include the new slit $h$), 
$$
\trq(\mathbf{W},s)=\sum_{\text{compatible }s^h}\trq(\mathbf{W}^h,s\cup s^h).
$$  

As the last preparatory item, let $h$ lie along $\R\times\{y_i\}$ relative to a strip system $\{R_i\}_i$ for $H\cup h$.  Assume that the building block of $D^h|_{R_i}$ (resp. $D^h|_{R_{i-1}}$) is the source black (resp. sink white) vertex from Figure \ref{fig:xmastreechandelier} intersecting $h$ in $m_h$ (adjacent) endpoints, constituting all the upper (resp. lower) boundary points of $D^h$ along $h$.  In this case, say the strip system $\{R_i\}_i$ is \emph{well-adapted to $\mathbf{W}^h$}.  Then, from the definitions of the linear maps $T_-$ and $T_+$,  
\begin{gather*}
\sum_{\text{compatible }s^h}\trq(\mathbf{W}^h,s\cup s^h)
=\left( \sum_{\sigma\in\mathfrak{S}_{m_h}}q^{2\ell(\sigma)} \right)\sum_{I(m_h)}\trq(\mathbf{W}^h,s\cup s^h_{I(m_h)})
\\=q^{\binom{m_h}{2}}[m_h]!\sum_{I(m_h)}\trq(\mathbf{W}^h, s\cup s^h_{I(m_h)})
\end{gather*} 
where in the second and third expressions the sum is over all subsets $I(m_h)=\{i_1,i_2,\dots,i_{m_h}\}$ of $\{1,2,\dots,n\}$ of size $m_h$ with $i_1<i_2<\dots<i_{m_h}$, and where $s^h_{I(m_h)}=(s^h_{lI(m_h)},s^h_{uI(m_h)})$ assigns the states $i_1,i_2,\dots,i_{m_h}$ from left to right on both the lower and upper boundary points along $h$.  (For the last equality above, see, e.g., \cite{sikora1}.)

Now, to begin the proof proper, let $\mathbf{W}_m$ be the split web corresponding to the multiweb $m\in\Omega_n$.  If $m$ is proper, then $\trq(m)=\trq(\mathbf{W}_m)$ is Laurent by Theorem \ref{theo:qtraceofweb}.  Else, let $e_1, e_2, \dots, e_r$ be the edges of $m$ of nontrivial multiplicity $m_{e_j}>1$.  (Note that since the conclusion of the theorem is independent of the sign of the trace $\trq(m)$, cilia considerations are irrelevant, by Proposition \ref{prop:ciliadependence}, so will be systematically ignored throughout the proof.)

For $j=0,1,2,\dots,r$ and subsets $I(m_{e_1}), I(m_{e_2}), \dots, I(m_{e_j})$ as above, inductively define the slit $h_j$, slit collection $H_j=\cup_{j'=0}^j h_{j'}$, and stated web $(\mathbf{W}_j,s^j_{I(m_{e_1}), I(m_{e_2}), \dots, I(m_{e_j})})$ with respect to $H_j$ as follows.  For the base case, put $\mathbf{W}_0=\mathbf{W}_m$ and $s^0, h_0=\emptyset$.  Then, it is not hard to see that one can isotope $(\mathbf{W}_{j-1},s^{j-1}_{I(m_{e_1}), I(m_{e_2}), \dots, I(m_{e_{j-1}})})$ relative to $H_{j-1}$, where the isotopy is supported around the edge $e_j$ by `shrinking' $e_j$ into a small neighborhood, such that for the stated web $(\mathbf{W}^\prime_{j-1},s^{j-1}_{I(m_{e_1}), I(m_{e_2}), \dots, I(m_{e_{j-1}})})$ relative to $H_{j-1}$ resulting from the isotopy:  (1)  $e_j$ does not cross any other part of the web $\mathbf{W}^\prime_{j-1}$;  (2)  $e_j$ is vertical with the black (resp. white) vertex at the top (resp. bottom);  and, (3)  for a small slit $h_j$ inserted midway across $e_j$, putting $H_j=H_{j-1}\cup h_j$ and $\mathbf{W}_j=\mathbf{W}_{j-1}^{\prime h_j}$, there exists a strip system $\{R^j_i\}_{i=1,2,\dots,t_j}$ for $H_j$ and well-adapted to $\mathbf{W}_j$ in the sense above (with the black and white vertices of $e_j$ in the two strips adjacent to $h_j$).  Define the state function $s^j_{I(m_{e_1}), I(m_{e_2}), \dots, I(m_{e_j})}=s^{j-1}_{I(m_{e_1}), I(m_{e_2}), \dots, I(m_{e_{j-1}})}\cup s^{h_j}_{I(m_{e_j})}$ where $s^{h_j}_{I(m_{e_j})}$ is defined as above.  

Iterating the above calculation, 
\begin{gather*}
\trq(\mathbf{W}_0)
=\trq(\mathbf{W}^\prime_0,s^0)
=q^{\binom{m_{e_1}}{2}}[m_{e_1}]!\sum_{I(m_{e_1})}\trq(\mathbf{W}_1,s^1_{I(m_{e_1})})
=q^{\binom{m_{e_1}}{2}}[m_{e_1}]!\sum_{I(m_{e_1})}\trq(\mathbf{W}^\prime_1,s^1_{I(m_{e_1})})
\\=q^{\binom{m_{e_1}}{2}}[m_{e_1}]!q^{\binom{m_{e_2}}{2}}[m_{e_2}]!\sum_{I(m_{e_1}), I(m_{e_2})}\trq(\mathbf{W}_2,s^2_{I(m_{e_1}), I(m_{e_2})})
=\dots
\\=
\left(\prod_{i=1}^r q^{\binom{m_{e_i}}{2}}[m_{e_i}]!\right)\sum_{I(m_{e_1}), I(m_{e_2}), \dots, I(m_{e_r})}\trq(\mathbf{W}_r,s^r_{I(m_{e_1}), I(m_{e_2}), \dots, I(m_{e_r})})
\end{gather*}
where the first, third, etc., equalities are by the isotopy invariance of the quantum trace for stated webs.  In particular, $\trq(\mathbf{W}_m)=\trq(\mathbf{W}_0)$ is divisible by $\prod_{i=1}^r [m_{e_i}]!$ (even by $\prod_{i=1}^r q^{\binom{m_{e_i}}{2}}[m_{e_i}]!$, compare Remark \ref{rema:afterdefof3dquantumtrace} (\ref{item:rema:afterdefof3dquantumtrace})).  This completes the proof.  

\begin{figure}[t]
\centering
\begin{gather*}
    \vcenter{\hbox{
        \begin{tikzpicture}
            \draw[line width=1pt, double distance=3pt,
             arrows = {Stealth[length=0pt 2.5 0,fill=white]}-] (0,0) -- (0,1.5);
        \end{tikzpicture}
    }}
    =
    I_V, 
    \hspace{5mm}
    \vcenter{\hbox{
        \begin{tikzpicture}
            \draw[line width=1pt, double distance=3pt,
             arrows = {Stealth[length=0pt 2.5 0,fill=white]}-] (0,1.5) -- (0,0);
        \end{tikzpicture}
    }}
    =
    I_{V^*},
    \hspace{5mm}
    \vcenter{\hbox{
        \begin{tikzpicture}[scale=.9]
            \draw[line width=1pt, double distance=3pt,
             arrows = {Stealth[length=0pt 2.5 0,fill=white]}-] ({sqrt(2)/2},{-sqrt(2)/2}) -- ({-sqrt(2)/2},{sqrt(2)/2});
            \draw[white, line width=12] ({-sqrt(2)/2},{-sqrt(2)/2}) -- ({sqrt(2)/2},{sqrt(2)/2});
            \draw[line width=1pt, double distance=3pt,
             arrows = {Stealth[length=0pt 2.5 0,fill=white]}-] ({-sqrt(2)/2},{-sqrt(2)/2}) -- ({sqrt(2)/2},{sqrt(2)/2});
        \end{tikzpicture}
    }}
    =
    R,
    \vcenter{\hbox{
        \begin{tikzpicture}[scale=.9]
            \draw[line width=1pt, double distance=3pt,
             arrows = {Stealth[length=0pt 2.5 0,fill=white]}-] ({-sqrt(2)/2},{-sqrt(2)/2}) -- ({sqrt(2)/2},{sqrt(2)/2});
            \draw[white, line width=12] ({sqrt(2)/2},{-sqrt(2)/2}) -- ({-sqrt(2)/2},{sqrt(2)/2});
            \draw [line width=1pt, double distance=3pt,
             arrows = {Stealth[length=0pt 2.5 0,fill=white]}-] ({sqrt(2)/2},{-sqrt(2)/2}) -- ({-sqrt(2)/2},{sqrt(2)/2});
        \end{tikzpicture}
    }}
    =
    R^{-1}, 
    \\
    \vcenter{\hbox{
        \begin{tikzpicture}[]
            \draw[line width=1pt, double distance=3pt,
             arrows = -{Stealth[length=0pt 2.5 0,fill=white]}] (0,0) ..controls (.5,-1.25) and (1,-1.25) .. (1.5,0);
        \end{tikzpicture}
    }}
    =
    1 \mapsto \sum_{i=1}^n x_i \otimes x_i^*,
    \hspace{5mm}
    \vcenter{\hbox{
        \begin{tikzpicture}[]
            \draw[line width=1pt, double distance=3pt,
             arrows = {Stealth[length=0pt 2.5 0,fill=white]}-] (0,0) ..controls (.5,-1.25) and (1,-1.25) .. (1.5,0);
        \end{tikzpicture}
    }}
    =
    1 \mapsto \sum_{i=1}^n q^{n+1-2i}x_i^* \otimes x_i,   
    \\
    \vcenter{\hbox{
        \begin{tikzpicture}[]
            \draw[line width=1pt, double distance=3pt,
             arrows = -{Stealth[length=0pt 2.5 0,fill=white]}] (0,0) ..controls (.5,1.25) and (1,1.25) .. (1.5,0);
        \end{tikzpicture}
    }}
    =
    x_i^*\otimes x_j \mapsto \delta_{ij},
    \hspace{5mm}
    \vcenter{\hbox{
        \begin{tikzpicture}[]
            \draw[line width=1pt, double distance=3pt,
             arrows = {Stealth[length=0pt 2.5 0,fill=white]}-] (0,0) ..controls (.5,1.25) and (1,1.25) .. (1.5,0);
        \end{tikzpicture}
    }}
    =
    x_i\otimes x_j^* \mapsto q^{2i-n-1}\delta_{ij},
    \\
    \vcenter{\hbox{
        \begin{tikzpicture}[]
\draw[red, line width=1pt, decoration = {zigzag,segment length = 3mm, amplitude = 1mm},decorate] (0,0)--(0,.85);
            \draw[line width=1pt, double distance=3pt,
             arrows = -{Stealth[length=0pt 2.5 0,fill=white]}] (0,0) -- (-1,-.5);
            \draw[line width=1pt, double distance=3pt,
             arrows = -{Stealth[length=0pt 2.5 0,fill=white]}] (0,0) -- (-.5,-1);
            \draw[line width=1pt, double distance=3pt,
             arrows = -{Stealth[length=0pt 2.5 0,fill=white]}] (0,0) -- (1,-.5);
            \node[] at (.25,-.5) {...};
            \node[draw,circle,fill=black,scale=2] (a) at (0,0) {};  
        \end{tikzpicture}
    }}
    =
    T_-,
    \hspace{5mm}
    \vcenter{\hbox{
        \begin{tikzpicture}[] 
\draw[red, line width=1pt, decoration = {zigzag,segment length = 3mm, amplitude = 1mm},decorate] (0,0)--(0,-.85);
            \draw[line width=1pt, double distance=3pt,
            arrows = {Stealth[length=0pt 2.5 0,fill=white,reversed]}-]
              (-1,.5) -- (0,0);
            \draw[line width=1pt, double distance=3pt,
             arrows = {Stealth[length=0pt 2.5 0,fill=white,reversed]}-] (-.5,1)-- (a);
            \draw[line width=1pt, double distance=3pt,
             arrows = {Stealth[length=0pt 2.5 0,fill=white,reversed]}-] (1,.5) -- (a);
            \node[] at (.25,.5) {...};
            \node[line width = 1pt,draw,black,circle,fill=white,scale=2] (a) at (0,0) {}; 
        \end{tikzpicture}
    }}
    =
    T_+
\end{gather*}
\caption{Building blocks for the Reshetikhin--Turaev invariant.}
\label{fig:xmastreechandelier}
\end{figure}
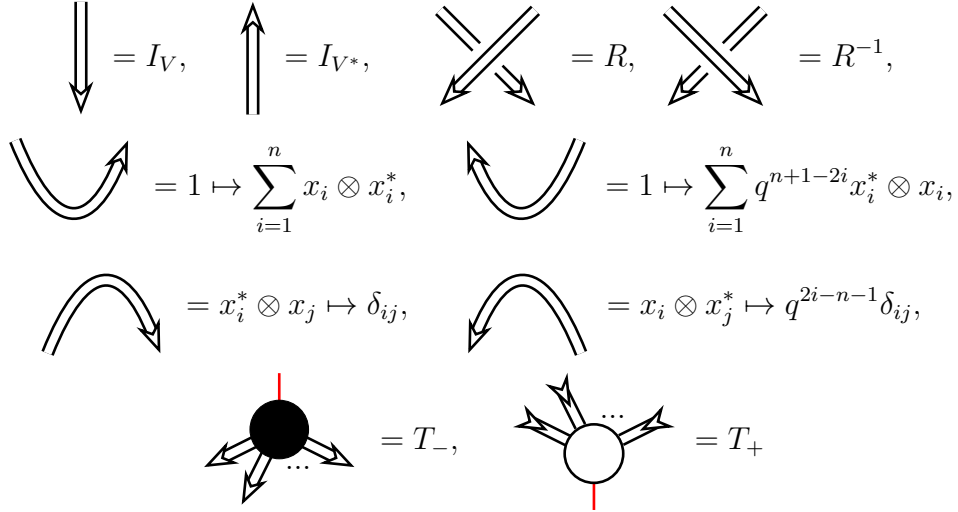

\section{Planar setting}\label{sec:planarsetting}

\subsection{Planar graphs}

\begin{defi}
A diagram in $\R^2$ of a ribbon graph $\mathbf{G}$ is \emph{planar} if it has no crossings.  To say $\mathbf{G}$ is planar means, possibly after isotopy, it admits a planar diagram.  From such a planar diagram is obtained a planar embedding of the core graph $G$, and in this way the diagram is identified with $G$.  In particular, $G$ can be thought of as a ciliated planar graph embedded in $\R^2$, where the cilia orientation conventions are the same as those described for diagrams in Definition \ref{defi:diagram}. 

A \emph{face} of $G$ means an internal face.  The \emph{length} of a face, namely the number of edges, is denoted $l$.  
\end{defi}

\begin{rema}\label{rema:sikoratraceofmultiweb}
For the remainder of the paper the main objects of study will be ciliated planar graphs $G$ planarly embedded in $\R^2$.     Note that, by equipping   $G\subset\R^2$ with the blackboard framing and thinking of it as a ribbon graph   $\mathbf{G}\subset\R^2\times\{0\}\subset\R^3$, it makes sense to talk about the quantum trace $\trq(m)$ of a multiweb $m\in\Omega_n$ in $G$.  (This quantum trace is independent of ambient planar isotopy of $G$.)
\end{rema}

\subsection{Quantum connections and local definition of trace}

\begin{defi}\label{defi:quantummatrix}
A \emph{$2\times2$ quantum matrix} is a $2\times2$ matrix $M_q=\begin{pmatrix}a&b\\c&d\end{pmatrix}$ in formal variables $a,b,c,d$ subject to the relations 
$$
ab=qba, \quad cd=qdc, \quad ac=qca, \quad bd=qdb, \quad ad-da=(q-q^{-1})bc, \quad bc=cb.
$$  
We also require that $q$ commutes with all other variables. 

What is meant by an $n\times n$ quantum matrix is an $n\times n$ matrix $M_q=(M_{ij})$ in formal variables $M_{ij}$ such that every $2\times2$ submatrix $\begin{pmatrix}M_{ij}&M_{il}\\M_{kj}&M_{kl}\end{pmatrix}$ for $i<k$ and $j<l$ is a $2\times2$ quantum matrix.  In the same way, it makes sense to talk about a quantum matrix with coefficients in a specified (complex) algebra, so long as the matrix elements $M_{ij}$ satisfy the desired $q$-commutation relations.  
\end{defi}

\begin{exam}\label{exam:quantummatrixexamples}
\begin{enumerate}
\item
For $1\leq m\leq n$, every $m\times m$ submatrix of a quantum matrix is also a quantum matrix.  
\item\label{item2:exam:quantummatrixexamples}
When $q=1$, any matrix over a commutative algebra is a quantum matrix.  
\item\label{item:exam:quantummatrixexamples}
Any diagonal matrix whose entries mutually commute is a quantum matrix.  
\end{enumerate}
\end{exam}

\begin{defi}\label{defi:planarquantumtrace}
An \emph{edge-commuting $n\times n$ quantum connection} (or just \emph{quantum connection}) $\Phi_q$ is the assignment to each edge $e\in E$ of $G$ a quantum matrix $\Phi_q(e)=(\Phi_q(e)_{ij})$ satisfying the additional property that $\Phi_q(e)_{ij}$ commutes with $\Phi_q(e^\prime)_{i^\prime j^\prime}$ for all $e\neq e^\prime$ and $i,j,i^\prime,j^\prime$.  

Let $V=\mathbb{C}^n$ with standard basis $\{x_i\}$ and $V^*=\{V\to\mathbb{C}\}$ its linear dual space with standard basis  $\{x^*_i\}$, and let $\mathbb{A}=\mathbb{A}(\Phi_q)$ denote the algebra generated by the variables $\Phi_q(e)_{ij}$ varying over all $e,i,j$ (subject to the $q$-commutation and commutation relations specified just above).  The \emph{quantum codeterminant} is the element $\mathrm{codet}_q\in V^{\otimes n}$   defined by 
$$
\mathrm{codet}_q=\sum_{\sigma\in \mathfrak{S}_n}(-q)^{\ell(\sigma)}x_{\sigma(1)}\otimes x_{\sigma(2)}\otimes\dots\otimes x_{\sigma(n)}.
$$  
The \emph{quantum dual codeterminant} is the element $\mathrm{codet}_q^*\in(V^*)^{\otimes n}$ defined by 
$$
\mathrm{codet}^*_q=\sum_{\sigma\in \mathfrak{S}_n}(-q)^{\ell(\sigma)}x^*_{\sigma(1)}\otimes x^*_{\sigma(2)}\otimes\dots\otimes x^*_{\sigma(n)}.
$$

Let $m\in\Omega_n(G)$ be a proper multiweb.  At a black vertex $b$ attach a copy of $V$ to every half-edge from $b$ on the edges $e$ of $m$, that is, the edges with multiplicity $m_e=1$. Likewise at a white vertex $w$ attach a copy of $V^*$ to each half-edge of $m$.  At each vertex $v$ of $G$, the linear order of half-edges of $G$ incident to $v$ coming from the cilia at $v$ induces a linear order of the half-edges of $m$ at $v$.  Attach a copy of the quantum codeterminant $\mathrm{codet}_q\in V^{\otimes n}$ (resp. quantum dual codeterminant $\mathrm{codet}_q^*\in(V^*)^{\otimes n}$) to each black vertex $b$ (resp. white vertex $w$).  Here, in either vertex case one imagines the $i$-th factor in the tensor product corresponding to the $i$-th half-edge of $m$ in the linear order at that vertex.  The quantum matrix $\Phi_q(e)$ on the edge $e$ of $m$ \emph{(co)acts on the left} on a vector $v\in V$ located at the half-edge of $e$ incident to the black vertex $b$ to give the element $\Phi_q(e)(v)\in\mathbb{A}\otimes V$ defined by
$$
\Phi_q(e)(v)=\sum_{i,j=1}^n \alpha_j \Phi_q(e)_{ij} \otimes x_i \quad \left(v=\sum_{j=1}^n \alpha_j x_j\right).
$$  
Note also in general that an element $v^*\in V^*$ pairs with an element $\sum_i a_i\otimes v_i\in\mathbb{A}\otimes V$ to give the element $\sum_i v^*(v_i) a_i\in\mathbb{A}$.  Now, taking the quantum codeterminants at all the black vertices $b$ yields a tensor $\mathrm{codet}_q^{\otimes N}\in(V^{\otimes n})^{\otimes N}$.  Letting the quantum matrices $\Phi_q(e)$ act on this tensor for every edge $e$ of $m$ yields a tensor $(\bigotimes_e\Phi_q(e))(\mathrm{codet}_q^{\otimes N})\in((\mathbb{A}\otimes V)^{\otimes n})^{\otimes N}$ where the ordering of each inner tensor factor $(\mathbb{A}\otimes V)^{\otimes n}$ still comes from the cilia at black vertices.  Reorder the entire tensor according to the cilia at the white vertices $w$ yielding a tensor in $(\mathbb{A}^{\otimes n}\otimes V^{\otimes n})^{\otimes N}$ which by slight abuse of notation is also denoted $(\bigotimes_e\Phi_q(e))(\mathrm{codet}_q^{\otimes N})$.  Note in particular that both the $\mathbb{A}$ and the $V$ factors at each white vertex $w$ are ordered according to the cilia at $w$.  Pairing this tensor with the quantum dual codeterminants varying over all the white vertices yields an element $\mathrm{codet}_q^{*\otimes N}((\bigotimes_e\Phi_q(e))(\mathrm{codet}_q^{\otimes N}))\in(\mathbb{A}^{\otimes n})^{\otimes N}$.  This is an element of the form $\sum_i \alpha_i \bigotimes_{j=1}^N(a_{ij1}\otimes a_{ij2}\otimes\dots\otimes a_{ijn})$.  Multiplying together in $\mathbb{A}$ the factors of each term in this sum yields an element in $\mathbb{A}$ of the form $\sum_i \alpha_i \prod_{j=1}^N(a_{ij1}a_{ij2}\dots a_{ijn})$, called the \emph{quantum trace of the proper multiweb $m$ with respect to $\Phi_q$} and denoted $\trqc(\Phi_q,m)\in\mathbb{A}$.  Note here that the order of multiplying the $N$ different $n$-length factors $a_{ij1}a_{ij2}\dots a_{ijn}$ in the product over $j$ is immaterial, as the different $n$-length factors, corresponding to different white vertices, commute by definition of the edge-commuting quantum connection $\Phi_q$ (there is a natural surjection from the set of edges to the set of white vertices).  Actually, for $m$ proper, as currently, the order of the $n$ factors in each product $a_{ij1}a_{ij2}\dots a_{ijn}$ is also immaterial for the same reason, as there is a one-to-one correspondence between variables and edges of $m$ at each vertex. This order will matter when $m$ is not proper, discussed now.  

Let $m\in\Omega_n$ be any multiweb.  Define an auxiliary ciliated planar graph $G_m$, whose planar embedding is uniquely determined up to arbitrarily small isotopy, such that the vertices of $G_m$ are the vertices of $G$ and such that there are $m_e$ edges of $G_m$ for every edge $e$ of $G$ obtained by splitting $e$ into $m_e$ parallel copies.  The ciliation of $G$ determines a canonical ciliation of $G_m$, in particular, the cilia of $G_m$ never go in between the copied edges when $m_e>1$.  The edge-commuting quantum connection $\Phi_q$ on $G$ determines a quantum connection $\Phi_q(G_m)$ on $G_m$, not necessarily edge-commuting, by putting $\Phi_q(G_m)(e^\prime)=\Phi_q(e)$ on each of the $m_e$ parallel edges $e^\prime$ of $G_m$ coming from splitting the edge $e$ of $G$.  In particular, the variables on the $m_e$ parallel edges do not in general commute.  Note $G_m$ can be thought of as a proper multiweb in itself.  As discussed in the previous paragraph, the quantum trace $\trqc(\Phi_q(G_m),G_m)\in\mathbb{A}$ thus makes sense despite the fact that $\Phi_q(G_m)$ is not edge-commuting, as the cilia around white vertices control the ordering of the noncommuting variables on parallel split edges.  Finally, analogous to Definition \ref{defi:firstquantumtraceofmultiweb}, define the \emph{quantum trace of the multiweb $m$ with respect to $\Phi_q$}, also denoted $\trqc(\Phi_q,m)\in\mathbb{A}$, by
$$
\trqc(\Phi_q,m)=\frac{\trqc(\Phi_q(G_m),G_m)}{\prod_{e\in E}[m_e]!}.
$$
\end{defi}

\begin{rema}\label{rema:definitionofquantumtraceofmultiwebs}
\begin{enumerate}
\item\label{item2:rema:definitionofquantumtraceofmultiwebs}
Of course, the definition of $\trqc(\Phi_q,m)$ depends on the ciliation $L$ of $G$.  This dependence will be suppressed in the notation.  
\item\label{item:rema:definitionofquantumtraceofmultiwebs}
Just as for Definition \ref{defi:firstquantumtraceofmultiweb}, a priori the quantum trace $\trqc(\Phi_q,m)$ of a general multiweb $m\in\Omega_n$ is defined by a rational expression in $q$ (over $\mathbb{A}$).  It is shown later  (Remark \ref{rema:qtraceispolyexpressioninqconnection}) 
(\ref{item:qtraceispolyexpressioninqconnection}) 
that it is in fact a polynomial expression in $q$ (over $\mathbb{A}$).
\item  
The commuting condition for edge-commuting quantum connections $\Phi_q$ is quite restrictive.  Other cases are relevant, especially in settings that are topologically nontrivial, see e.g. \cite{bc21, ChekhovShapiro, douglas1, GoncharovKenyon, pp24}.
\end{enumerate}
\end{rema}

\begin{defi}
For any planar graph $G$ equipped with a quantum connection $\Phi_q$, the \emph{quantum partition function with respect to $\Phi_q$}, denoted $Z(\Phi_q)\in\mathbb{A}$, is defined by
$$
Z(\Phi_q)=q^{-N\binom{n}{2}}\sum_{m\in\Omega_n} \trqc(\Phi_q,m).
$$
\end{defi}

\begin{rema}
Note $Z(\Phi_q)$ is independent of planar isotopy of $G$. 
\end{rema}

\begin{exam}
Consider the $3$-multiweb of Figure \ref{small3web}, where $I$ denotes the identity matrix. Its quantum trace is computed as follows. The codeterminant at $b$ is  
$$
\mathrm{codet}_q = x_1\otimes x_2\otimes x_3-qx_1\otimes x_3\otimes x_2-qx_2\otimes x_1\otimes x_3+q^2x_2\otimes x_3\otimes x_1+ q^2x_3\otimes x_1\otimes x_2-q^3x_3\otimes x_2\otimes x_1
$$ 
where the order of tensor factors corresponds to the order of edges counterclockwise around the black vertex (taking into account the multiplicity). The dual codeterminant at $w$ has a similar formula. When computing the pairing along the lower edge (of multiplicity $2$) the indices in the first and second position must match. This implies that those in the third position also match. Thus each term in the codeterminant at $b$ is paired with the corresponding term in the dual codeterminant at $w$, giving $6$ terms in all:
$$
\trqc(\Phi_q,m)=\frac{\phi_{33}+q^2\phi_{22}+q^2\phi_{33}+q^4\phi_{11}+q^4\phi_{22}+q^6\phi_{11}}{[2]}=q^5\phi_{11}+q^3\phi_{22}+q\phi_{33}.
$$
If, instead, $\phi$ and $I$ are swapped, then a similar (while slightly more involved, using the quantum matrix relations) calculation gives the quantum trace to be 
$$
\trqc(\Phi_q,m)=q^5(\phi_{22}\phi_{33}-q\phi_{23}\phi_{32})+q^3(\phi_{11}\phi_{33}-q\phi_{13}\phi_{31})+q(\phi_{11}\phi_{22}-q\phi_{12}\phi_{21}).
$$

\begin{figure}[t]
\centering
\includegraphics[width=1.5in]{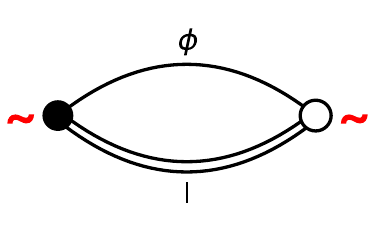}
\caption{\label{small3web}A ciliated $3$-multiweb with a quantum connection.}
\end{figure}
\end{exam}

\subsection{Alternative definition of quantum trace}\label{alt}

\begin{defi}\label{defi:qdeterminant}
The \emph{quantum determinant} $\det_q(M_q)$ of a quantum matrix $M_q=(M_{ij})$ is 
$$
\det_q(M_q)=\sum_{\sigma\in \mathfrak{S}_n}(-q)^{\ell(\sigma)}M_{1\sigma(1)}M_{2\sigma(2)}\dots M_{n\sigma(n)}.
$$
See \cite{parshall_wang} for information on quantum determinants.  

For subsets $S,T\subset\{1,2,\dots,n\}$ of size $1\leq m\leq n$ the \emph{$(S,T)$-quantum minor} $\det_{q,S,T}(M_q)$ is the quantum determinant of the $m\times m$ quantum submatrix $(M_{ij})_{i\in S,j\in T}$.  

A \emph{half-edge $n$-coloring} $c$ of a multiweb $m\in\Omega_n$ is the assignment of subsets $S_e, T_e\subset\{1,2,\dots,n\}$ of size $1\leq m_e\leq n$ to the edges $e$ of $m$, where $S_e$ (resp. $T_e$) is imagined to be attached to the half-edge of $e$ incident to the white (resp. black) vertex, and satisfying the property that for every vertex the union of the subsets attached to the half-edges incident to that vertex equals $\{1,2,\dots,n\}$.  In particular, the subsets around a given vertex are disjoint.  

If $c$ is a half-edge coloring of $m$ and $v$ is a vertex, then the associated \emph{vertex permutation} $\sigma_v\in \mathfrak{S}_n$ is defined as follows.  According to the linear order of half-edges at $v$, list the colors of the subsets of $\{1,2,\dots,n\}$ assigned by $c$ to these half-edges, where for a given subset the colors are listed in their natural order.  This determines the desired permutation $\sigma_v$ of $\{1,2,\dots,n\}$.   

Let $\Phi_q$ be a quantum connection on $G$ and let $m\in\Omega_n$ be a multiweb.  Note if $c$ is a half-edge coloring of $m$, then the quantum minor $\det_{q,S_e,T_e}(\Phi_q(e))$ of size $m_e$ is defined for all edges $e$ of $m$.  The \emph{alternative quantum trace} of $m$, denoted $\trqc^\prime(\Phi_q,m)\in\mathbb{A}$, is defined to be
\be\label{alttrace}
\trqc^\prime(\Phi_q,m)=\left(\prod_{e\in E}q^{\binom{m_e}{2}}\right)\sum_c\prod_{v\in V}(-q)^{\ell(\sigma_v)}\prod_{e\in E}\det_{q,S_e,T_e}(\Phi_q(e))  
\ee
where the sum is over all half-edge colorings $c$ of the multiweb $m$.  Note the order of the factors in $\mathbb{A}$ in the rightmost product over edges $e$ of $m$ is immaterial, as the quantum connection $\Phi_q$ is edge-commuting.  
\end{defi}

\begin{rema}\label{rema:alternativeqtraceispolynomial}
The alternative quantum trace $\trqc^\prime(\Phi_q,m)$ of a general multiweb $m\in\Omega_n$ is defined by a polynomial expression in $q$ (over $\mathbb{A}$).  
\end{rema}

\begin{prop}\label{prop:comparisonofqtraces}
For all quantum connections $\Phi_q$ and multiwebs $m\in\Omega_n$,
$$
\trqc(\Phi_q,m)=\trqc^\prime(\Phi_q,m)\in\mathbb{A}.
$$
\end{prop}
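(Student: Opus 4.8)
The plan is to prove Proposition \ref{prop:comparisonofqtraces} first for proper multiwebs, by directly expanding the tensor-network definition of $\trqc$ and matching terms against half-edge colorings, and then to deduce the general case by applying the proper case to the split graph $G_m$ and collapsing the parallel-edge data into quantum minors.

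For a proper multiweb, where every $m_e\in\{0,1\}$, I would expand each ingredient of the tensor-network construction of $\trqc(\Phi_q,m)$. Writing $\mathrm{codet}_q$ at a black vertex $b$ as $\sum_{\sigma_b\in\mathfrak{S}_n}(-q)^{\ell(\sigma_b)}\,x_{\sigma_b(1)}\otimes\dots\otimes x_{\sigma_b(n)}$ assigns to the $i$-th half-edge at $b$ (in cilia order) the color $\sigma_b(i)$; since these range over all of $\{1,\dots,n\}$, this is exactly a choice of black-end colors $T_e=\{t_e\}$ satisfying the coloring constraint at $b$, with vertex permutation $\sigma_b$ and sign $(-q)^{\ell(\sigma_b)}$. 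The coaction $\Phi_q(e)(x_{t_e})=\sum_s\Phi_q(e)_{s,t_e}\otimes x_s$ transports this to the white end, and pairing against the expansion $\sum_{\tau_w}(-q)^{\ell(\tau_w)}\,x^*_{\tau_w(1)}\otimes\dots\otimes x^*_{\tau_w(n)}$ of $\mathrm{codet}_q^*$ at a white vertex $w$ forces, via $x^*_{\tau_w(k)}(x_s)=\delta_{\tau_w(k),s}$, the white-end color $s_e=\tau_w(k)$, i.e.\ a choice of white-end colors with vertex permutation $\tau_w=\sigma_w$. Collecting the signs $\prod_v(-q)^{\ell(\sigma_v)}$ and the surviving matrix entries $\Phi_q(e)_{s_e,t_e}$ (multiplied in white-cilia order) reproduces, term by term, the sum over half-edge colorings defining $\trqc^\prime(\Phi_q,m)$, using $q^{\binom{1}{2}}=1$. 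I emphasize that this expansion is valid for \emph{any} quantum connection on a proper multiweb, edge-commuting or not, provided the white-cilia ordering of the (possibly noncommuting) entries is retained on both sides.

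For a general $m\in\Omega_n$, recall $\trqc(\Phi_q,m)=\trqc(\Phi_q(G_m),G_m)/\prod_e[m_e]!$ with $G_m$ proper. Applying the previous paragraph to $(\Phi_q(G_m),G_m)$ gives $\trqc(\Phi_q(G_m),G_m)=\trqc^\prime(\Phi_q(G_m),G_m)$ with the white-cilia ordering. I would then group the half-edge colorings of $G_m$ by the color sets $S_e,T_e$ appearing on the parallel copies of each edge $e$ of $m$; these sets are exactly a half-edge coloring $c$ of $m$, and what remains free are the internal orderings $\pi^b_e,\pi^w_e\in\mathfrak{S}_{m_e}$ of the colors at the two ends. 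Two facts finish the reduction. First, the vertex-permutation length of $G_m$ splits as $\ell(\sigma_v^{G_m})=\ell(\sigma_v)+\sum_{e\ni v}\ell(\pi^v_e)$, since inversions between distinct color-groups depend only on the sets and reproduce the cross-inversions of $\sigma_v$, while within-group inversions give the internal $\ell(\pi^v_e)$. Second, writing $S_e=\{s_1<\dots<s_{m_e}\}$ and $T_e=\{t_1<\dots<t_{m_e}\}$, the quantum-determinant identity
$$
\sum_{\pi^b_e,\pi^w_e\in\mathfrak{S}_{m_e}}(-q)^{\ell(\pi^b_e)+\ell(\pi^w_e)}\,\Phi_q(e)_{s_{\pi^w_e(1)},t_{\pi^b_e(1)}}\cdots\Phi_q(e)_{s_{\pi^w_e(m_e)},t_{\pi^b_e(m_e)}}=q^{\binom{m_e}{2}}[m_e]!\,\det_{q,S_e,T_e}(\Phi_q(e)),
$$
the product on the left being in white-cilia order. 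Substituting the factorized signs and this identity into $\trqc^\prime(\Phi_q(G_m),G_m)$, and using that entries of distinct edges of $m$ commute, collapses the sum to $\prod_e[m_e]!\cdot\trqc^\prime(\Phi_q,m)$; dividing by $\prod_e[m_e]!$ yields $\trqc(\Phi_q,m)=\trqc^\prime(\Phi_q,m)$.

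The main obstacle is the displayed quantum-determinant identity. Because the entries on the $m_e$ parallel copies $q$-commute rather than commute, passing from the doubly-indexed sum of ordered products to the quantum minor $\det_{q,S_e,T_e}$ requires careful bookkeeping of the $q$-powers produced when reordering each product into column-increasing form, and it is precisely here that one must invoke the symmetrization (row/column expansion) properties of the quantum determinant, as developed in \cite{parshall_wang}. This is the exact noncommutative analogue of the edge-splitting computation in the proof of Theorem \ref{theo:polynomial}, where the same factor $q^{\binom{m_e}{2}}[m_e]!$ appears; I would establish it by reducing one of the two permutation sums to a row (or column) expansion of $\det_q$ and then checking that the remaining sum produces $q^{\binom{m_e}{2}}[m_e]!$, verifying the base case $m_e=2$ against the relations $bc=cb$ and $ad-da=(q-q^{-1})bc$, which already force the factor $q^{\binom{2}{2}}[2]!=1+q^2$.
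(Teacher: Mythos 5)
Your proposal is correct and follows essentially the same route as the paper's proof: the proper case by matching codeterminant/dual-codeterminant terms with half-edge colorings, then the general case by partitioning the colorings of the split graph $G_m$ into equivalence classes with fixed $S_e,T_e$, splitting the vertex-permutation lengths into cross-group and within-group inversions, and collapsing the double sum over internal permutations via the Parshall--Wang expansion (the paper's Lemma \ref{lem:qdet_orders}) into $q^{\binom{m_e}{2}}[m_e]!\det_{q,S_e,T_e}(\Phi_q(e))$. Your displayed quantum-determinant identity is exactly the combination of that lemma with $\sum_{\tau}q^{2\ell(\tau)}=q^{\binom{m_e}{2}}[m_e]!$ used in the paper, so no gap remains.
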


\begin{lemma}[\cite{parshall_wang}] \label{lem:qdet_orders}
Let $M_q=(M_{ij})$ be a quantum matrix.
\begin{enumerate}
\item 
$\displaystyle \det_q(M_q) = \sum_{\tau \in \mathfrak{S}_n} (-q)^{\ell(\tau)-\ell(\sigma)} M_{\sigma(1)\tau(1)}M_{\sigma(2)\tau(2)} \dots M_{\sigma(n)\tau(n)}$ for any $\sigma \in \mathfrak{S}_n$.
\item 
$\displaystyle \det_q(M_q) = \sum_{\sigma \in \mathfrak{S}_n} (-q)^{\ell(\sigma)-\ell(\tau)} M_{\sigma(1)\tau(1)}M_{\sigma(2)\tau(2)} \dots M_{\sigma(n)\tau(n)}$ for any $\tau \in \mathfrak{S}_n$.
\end{enumerate}
\qed\end{lemma}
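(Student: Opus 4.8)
The plan is to prove the two parts together by establishing two complementary ``reordering'' identities and then forcing them to agree. Write $D(\sigma,\tau) = M_{\sigma(1)\tau(1)} M_{\sigma(2)\tau(2)} \cdots M_{\sigma(n)\tau(n)}$ for the product taken in position order, where position $i$ carries row index $\sigma(i)$ and column index $\tau(i)$; by definition $\det_q(M_q) = \sum_\tau (-q)^{\ell(\tau)} D(\mathrm{id},\tau)$. Set $F(\sigma) = \sum_\tau (-q)^{\ell(\tau)} D(\sigma,\tau)$ and $G(\tau) = \sum_\sigma (-q)^{\ell(\sigma)} D(\sigma,\tau)$. Part (1) is the assertion $F(\sigma) = (-q)^{\ell(\sigma)} \det_q(M_q)$, while Part (2) rearranges to $G(\tau) = (-q)^{\ell(\tau)} \det_q(M_q)$.

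First I would prove $F(\sigma) = (-q)^{\ell(\sigma)} F(\mathrm{id})$ by induction on $\ell(\sigma)$. Given a descent $k$ of $\sigma$, write $\sigma = \sigma' s_k$ with $\ell(\sigma') = \ell(\sigma)-1$, so that $\sigma'(k) < \sigma'(k+1)$; it then suffices to show the single adjacent-transposition step $F(\sigma' s_k) = -q\,F(\sigma')$. To see this, pair the summation index $\tau$ with $\tau s_k$ (a fixed-point-free involution on $\mathfrak{S}_n$), and in each pair factor out the products of the factors in positions $\neq k,k+1$, which are common to all four terms because $\sigma'$ and $\sigma' s_k$ (resp. $\tau$ and $\tau s_k$) agree off positions $k,k+1$. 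Writing $a < b$ for the two rows and $c < d$ for the two columns involved, the step reduces to the purely $2\times 2$ identity $M_{bc}M_{ad} - q\,M_{bd}M_{ac} = -q\,(M_{ac}M_{bd} - q\,M_{ad}M_{bc})$, which follows directly from the two quantum-matrix relations $M_{ad}M_{bc} = M_{bc}M_{ad}$ and $M_{ac}M_{bd} - M_{bd}M_{ac} = (q-q^{-1})M_{ad}M_{bc}$ of Definition~\ref{defi:quantummatrix} applied to the submatrix on rows $\{a,b\}$ and columns $\{c,d\}$. Since $F(\mathrm{id}) = \det_q(M_q)$ by definition, this proves Part (1). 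The identity $G(\tau) = (-q)^{\ell(\tau)} G(\mathrm{id})$ is proved by the mirror-image induction on $\ell(\tau)$, now pairing the row index $\sigma$ with $\sigma s_k$; the inductive step reduces to the same $2\times 2$ relations, and the base case $G(\mathrm{id})$ is trivial.

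The one genuinely new input is the identification $G(\mathrm{id}) = \det_q(M_q)$, i.e. that the ``column-ordered'' determinant $B := \sum_\sigma (-q)^{\ell(\sigma)} D(\sigma,\mathrm{id})$ equals $A := \det_q(M_q) = F(\mathrm{id})$. I would obtain this from a double-sum comparison: evaluating $\Sigma := \sum_{\sigma,\tau} (-q)^{\ell(\sigma)+\ell(\tau)} D(\sigma,\tau)$ by summing over $\tau$ first gives $\Sigma = \big(\sum_\sigma q^{2\ell(\sigma)}\big)\,A$ via Part (1), while summing over $\sigma$ first gives $\Sigma = \big(\sum_\tau q^{2\ell(\tau)}\big)\,B$ via the column identity. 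The two Poincaré sums coincide and equal $\sum_{w\in\mathfrak{S}_n} q^{2\ell(w)} = q^{\binom{n}{2}}[n]!$; this is exactly where the hypotheses on $q$ from Definition~\ref{defi:quantuminteger} enter, since they guarantee $[k]\neq 0$ for $k=1,\dots,n$ and hence $[n]! \neq 0$. Cancelling the nonzero factor yields $A = B$, and then the column identity reads $G(\tau) = (-q)^{\ell(\tau)} \det_q(M_q)$, which is precisely Part (2).

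I expect the main obstacle to be purely bookkeeping: carefully verifying that, across the four terms in each pairing, the off-position factors really are identical and can be pulled out of the noncommutative product, so that the problem localizes to the single $2\times 2$ submatrix. Once that localization is justified the algebra is the short relation-chase indicated above. The only conceptual subtlety is the nonvanishing of the Poincaré polynomial $q^{\binom{n}{2}}[n]!$, which must be checked against the standing assumptions on $q$ rather than taken for granted.
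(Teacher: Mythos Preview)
The paper does not prove this lemma; it is quoted from \cite{parshall_wang} and closed with a \qed. So there is no in-paper argument to compare against, and your task was really to supply a proof that the paper omits.

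Your argument is correct. The induction on $\ell(\sigma)$ via a descent $s_k$, pairing $\tau$ with $\tau s_k$, does localize cleanly: since $\sigma'$ and $\sigma's_k$ (resp.\ $\tau$ and $\tau s_k$) agree off positions $k,k{+}1$, all four terms share the same left block $M_{\sigma'(1)\tau(1)}\cdots M_{\sigma'(k-1)\tau(k-1)}$ and right block $M_{\sigma'(k+2)\tau(k+2)}\cdots M_{\sigma'(n)\tau(n)}$, so factoring is legitimate in the noncommutative ring. The residual $2\times 2$ identity you wrote is exactly the combination of $M_{ad}M_{bc}=M_{bc}M_{ad}$ and $M_{ac}M_{bd}-M_{bd}M_{ac}=(q-q^{-1})M_{ad}M_{bc}$, and it checks. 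The mirror induction for $G$ is the same computation with rows and columns exchanged.

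One remark on the step $A=B$. Your double-sum trick is clean but, as you note, uses $[n]!\neq 0$, so it proves the identity only under the standing hypotheses of Definition~\ref{defi:quantuminteger}. That is enough for everything in this paper. If you wanted the statement over $\mathbb{Z}[q,q^{-1}]$ (as in \cite{parshall_wang}), you could instead observe that $A-B$ is an element of the generic quantum matrix bialgebra with coefficients in $\mathbb{Z}[q,q^{-1}]$ which your argument shows is annihilated by $q^{\binom{n}{2}}[n]!$; since that ring has no $\mathbb{Z}[q,q^{-1}]$-torsion, $A=B$ follows unconditionally. Alternatively one can run a second length induction directly on $A=B$ (this is closer to what Parshall--Wang do). Either way, your proof as written is adequate for the paper's purposes.
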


\begin{proof}[Proof of Proposition \ref{prop:comparisonofqtraces}] 
Throughout the proof, notation as in Definitions \ref{defi:planarquantumtrace}, \ref{defi:qdeterminant} is used.  

Let $G_m$ be the split multiweb, with induced quantum connection $\Phi_q=\Phi_q(G_m)$.  The terms in the sum (indexed by $\mathfrak{S}_n$) expressing the codeterminant $\mathrm{codet}_q$ at each vertex $b$ are in one-to-one correspondence with half-edge colorings near $b$.  Similarly at each vertex $w$.  Since half-edge colorings can be chosen independently at each vertex, there is a bijection between half-edge colorings $c$ of $G_m$ and terms in the tensor $\mathrm{codet}_q^{*\otimes N}\otimes\mathrm{codet}_q^{\otimes N}$. 

One gathers
$$
\mathrm{codet}_q^{*\otimes N}\otimes\mathrm{codet}_q^{\otimes N}
=\sum_c \left(\prod_v (-q)^{\ell(\sigma_v)}\right)
\bigotimes_w x^*_{\sigma_w(1)}\otimes\dots\otimes x^*_{\sigma_w(n)} \otimes\bigotimes_b x_{\sigma_b(1)}\otimes\dots\otimes x_{\sigma_b(n)}
$$
where $\sigma_v=\sigma_b$ or $\sigma_w$ in $\mathfrak{S}_n$ are the vertex permutations for $c$.  Note in the contraction 
$$
\mathrm{codet}_q^{*\otimes N}\left(\left(\bigotimes_e\Phi_q(e)\right)(\mathrm{codet}_q^{\otimes N})\right)
$$ 
if an edge $e$ assigns color $j$ to the black end $b$ and color $i$ to the white end $w$, then one will get a factor of $\Phi_q(e)_{ij}$.  Also note, since $\det_{q,\{i\},\{j\}}(\Phi_q(e))=\Phi_q(e)_{ij}$, the result follows immediately if $m$ is proper.  

So let $m$ be a general multiweb.  By the definition, the convention is used that one multiplies factors in $\mathbb{A}$ from left to right according to the (clockwise) cyclic order of the edges around white vertices.  

Partition the sum by equivalent colorings, calling two half-edge colorings $c$ of  $G_m$ equivalent if they correspond to the same half-edge coloring of $m$, that is, differ only by permuting colors on half-edges which come from the same half-edge in $m$. For a given edge $e$ from $b$ to $w$ of multiplicity $m_e$, fix the sets $S_e = \{i_1,\dots,i_{m_e}\}$ and $T_e=\{j_1,\dots,j_{m_e}\}$,  and (first fixing half-edge colorings on all edges other than $e$) let $c_0$ be the half-edge coloring of $G_m$ which uses the colors of $S_e$ and $T_e$ in increasing order on the $m_e$ edges.  Let $\sigma_{0}:=\sigma(c_0)_w$ and $\tau_{0}:=\sigma(c_0)_b$ in $\mathfrak{S}_n$ be the corresponding vertex permutations for $c_0$.  

For any other equivalent half-edge coloring $c$ (agreeing with $c_0$ away from $e$),  the colors from the set $S_e$ will be permuted by some permutation $\sigma\in\mathfrak{S}_{m_e}$, and $T_e$ by some permutation $\tau \in \mathfrak{S}_{m_e}$.   Therefore the $(-q)^{\ell(\sigma_w) + \ell(\sigma_b)}$ factor in the $c$ term in $\trqc(\Phi_q,G_m)$  will differ from the corresponding factor $(-q)^{\ell(\sigma_0)+\ell(\tau_0)}$ in the $c_0$ term by $(-q)^{\ell(\sigma) + \ell(\tau)}$.  Gathering, there will be one coloring $c$ equivalent to $c_0$ for every pair $\sigma,\tau$, giving (a factor of) a term in the sum:
$$ 
(-q)^{\ell(\sigma_0)+\ell(\tau_0)} (-q)^{\ell(\sigma)+\ell(\tau)} \Phi_q(e)_{i_{\sigma(1)}j_{\tau(1)}}\Phi_q(e)_{i_{\sigma(2)}j_{\tau(2)}} \dots \Phi_q(e)_{i_{\sigma(m_e)}j_{\tau(m_e)}}. 
$$

By Lemma \ref{lem:qdet_orders}, summing over all $\sigma,\tau$ in $\mathfrak{S}_{m_e}$ gives  
$$
(-q)^{\ell(\sigma_0)+\ell(\tau_0)}  \det_{q,S_e,T_e}(\Phi_q(e)) \sum_{\tau\in\mathfrak{S}_{m_e}} q^{2\ell(\tau)}.
$$  
Noting that 
$$
\sum_{\tau\in m_e} q^{2\ell(\tau)} = q^{\binom{m_e}{2}}[m_e]!
$$ 
(see e.g. \cite{sikora1}), and that this calculation applies independently on every edge $e$, the proof is complete.	
\end{proof} 

\begin{rema}\label{rema:qtraceispolyexpressioninqconnection}
\begin{enumerate}
\item
For a different perspective on this proof, via a quantum version of Grassmann variables \cite{bigYellowCFT1997}, see Appendix \ref{sec:qgrassmann}.  In particular, there it is shown that the full strength of the quantum matrix relations are not required, rather, a weaker set of relations (\ref{eq:mainRelation}) derived from them.
\item\label{item:qtraceispolyexpressioninqconnection}
Since the alternative quantum trace $\trqc^\prime(\Phi_q,m)$ is a polynomial expression in $q$ (over $\mathbb{A}$), so is the quantum trace $\trqc(\Phi_q,m)$.  Compare Remark \ref{rema:definitionofquantumtraceofmultiwebs} (\ref{item:rema:definitionofquantumtraceofmultiwebs}).  
\end{enumerate}
\end{rema}

\begin{defi}\label{defi:edge-coloring}
A half-edge $n$-coloring $c$ of a multiweb $m\in\Omega_n$ is an \emph{edge $n$-coloring} if $S_e=T_e$ for all edges $e$ of $m$, in which case one imagines just a single subset of $\{1,2,\dots,n\}$, say $S_e$, assigned to each edge $e$.
\end{defi}

\subsection{\texorpdfstring{$\GL_n$}{GLn}-connections}

\begin{defi}
A \emph{$\GL_n$-connection} $\Phi$ on $G$ assigns a matrix $\Phi(e)\in\GL_n(\mathbb{C})$ to every edge of $G$.  Note that:  (1)  when $q=1$, a $\GL_n$-connection $\Phi$ is, in particular, a quantum connection, $\Phi_1=\Phi$; and, (2) for all $q$, a diagonal $\GL_n$-connection $\Phi$, i.e. where all edge matrices $\Phi(e)$ are diagonal, is, in particular, a quantum connection, $\Phi_q=\Phi$.  Consequently, the trace $\mathrm{tr}_1(\Phi,m)\in\mathbb{C}$ is defined, as is $\trqc(\Phi,m)\in\mathbb{C}$ for diagonal $\Phi$ for all $q$, according to Definition \ref{defi:planarquantumtrace}.

Two $\GL_n$-connections $\Phi$ and $\Phi^\prime$ are \emph{$\GL_n$-gauge equivalent} if there are matrices $A_v\in\GL_n$ such that for all vertices $b,w$ and edges $e$ connecting $b$ to $w$ then $\Phi^\prime(e)=A_w\Phi(e)A_b$.  And they are \emph{diagonally gauge equivalent} if the matrices $A_v\in\GL_n$ can be taken to be diagonal matrices.

The \emph{identity connection} $I$ assigns the $n\times n$ identity matrix to every edge of $G$.

If $G$ is equipped with a $\GL_n$-connection $\Phi$ and $\gamma=v_1v_2\dots v_p v_1$ is a based oriented loop in $G$, the \emph{monodromy} of $\gamma$ with respect to $\Phi$ is $\Phi^\prime(v_p v_1)\dots\Phi^\prime(v_2v_3)\Phi^\prime(v_1v_2)\in\GL_n$ where $\Phi^\prime(v_i v_{i+1})$ equals $\Phi(v_i v_{i+1})$ if $v_i$ is black and equals $\Phi(v_i v_{i+1})^{-1}$ if $v_i$ is white.  Note that changing the base point changes the monodromy up to conjugation (so, for diagonal connections, the monodromy is defined independently of a choice of base point).
\end{defi}

\begin{prop}\label{prop:diagonalgaugeequivalence}
Two (resp. diagonal) $\GL_n$-connections $\Phi$ and $\Phi^\prime$ on $G$ are (resp. diagonally) $\GL_n$-gauge equivalent if and only if their based CCW face monodromies are the same for all faces.  (And, if the connections are valued in $\SL_n$, the gauge transformations $A_v$ may be as well.)
\end{prop}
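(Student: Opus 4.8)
The plan is to prove the substantive implication---that matching face monodromies force gauge equivalence---by a spanning-tree normalization combined with the topological fact that the face loops generate the fundamental group of $G$; the reverse implication is a short telescoping computation. For the reverse direction, suppose $\Phi'(bw)=A_w\Phi(bw)A_b$, and write the monodromy of a based loop $\gamma=v_1v_2\cdots v_{2k}v_1$ as the alternating product of the $\Phi(e)^{\pm1}$ dictated by the colors of the $v_i$. Substituting the gauged edge matrices, I would observe that, because the vertices alternate in color, each interior vertex $v_i$ contributes $A_{v_i}$ from one adjacent factor and $A_{v_i}^{-1}$ from the other, so all interior gauge matrices cancel in pairs. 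What survives is conjugation by the gauge matrix at the base point, so the based face monodromies of $\Phi$ and $\Phi'$ agree up to simultaneous conjugation by $A_{v_0}$ (and literally agree in the diagonal case, since conjugation by a diagonal matrix fixes diagonal monodromies).

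For the forward direction I would fix a spanning tree $T$ of $G$ rooted at a vertex $v_0$, set $A_{v_0}=I$, and propagate outward along $T$: at a tree edge $bw$ with $b$ already known set $A_w=\Phi'(bw)A_b^{-1}\Phi(bw)^{-1}$, and with $w$ already known set $A_b=\Phi(bw)^{-1}A_w^{-1}\Phi'(bw)$. By construction the gauge transform $\tilde\Phi:=A_w\Phi A_b$ of $\Phi$ agrees with $\Phi'$ on every tree edge, and since $A_{v_0}=I$ the connection $\tilde\Phi$ has the same based monodromies (at $v_0$) as $\Phi$. It then remains to show that $\tilde\Phi=\Phi'$ on the non-tree edges as well.

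The topological input enters here. Because $G$ is $2$-connected and planar, the complement of $\R^2$ minus one interior point of each bounded face deformation retracts onto $G$; hence I would use the standard consequence that $\pi_1(G,v_0)$ is generated by the tree-based CCW boundary loops of the internal faces. Since $\gamma\mapsto M(\gamma)$ is a homomorphism $\pi_1(G,v_0)\to\GL_n$ (for the evident composition) and the holonomies of $\tilde\Phi$ and $\Phi'$ agree on these generators, they agree on all of $\pi_1(G,v_0)$. Applying this to the fundamental cycle $C_e$ of a non-tree edge $e=bw$---the edge $e$ together with its tree path back---the monodromy of $C_e$ is a product of tree-transport factors, which coincide for $\tilde\Phi$ and $\Phi'$, and a single factor $\tilde\Phi(e)^{\pm1}$; cancelling the tree factors forces $\tilde\Phi(e)=\Phi'(e)$. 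Thus $\tilde\Phi=\Phi'$ everywhere, that is $\Phi'=A_w\Phi A_b$, giving gauge equivalence.

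Each propagation formula is a product of the given edge matrices and their inverses, so if $\Phi,\Phi'$ are diagonal the $A_v$ are automatically diagonal, and if $\Phi,\Phi'\in\SL_n$ the $A_v$ lie in $\SL_n$ once $A_{v_0}=I$; in the reverse direction one simply rescales $A_{v_0}$ by a scalar to land in $\SL_n$, since this does not change the conjugation. I expect the main obstacle to be the topological step: one needs that the face loops generate $\pi_1(G,v_0)$ \emph{as a group}, not merely $H_1$, so that the full non-abelian holonomy---and hence the connection on each non-tree edge---is pinned down rather than just its homology class; organizing all the base points through the spanning tree is exactly what makes this precise, and the $2$-connectedness and planarity of $G$ are used essentially to guarantee the face loops suffice.
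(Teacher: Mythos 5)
Your route is the same as the paper's: the substantive direction by normalizing along a spanning tree, the easy direction by telescoping (the paper's own proof is three sentences invoking exactly this spanning-tree argument). The tree propagation, the formulas for $A_v$, and the diagonal/$\SL_n$ bookkeeping are all correct. However, in the general (non-diagonal) case your argument has a genuine gap at the sentence ``since the holonomies of $\tilde\Phi$ and $\Phi'$ agree on these generators, they agree on all of $\pi_1(G,v_0)$.'' The hypothesis equates the monodromy of each face based at a vertex \emph{of that face}, say $v_F$; what your $\pi_1$ argument needs is equality of the holonomies of $\tilde\Phi$ and $\Phi'$ on the \emph{tree-based} face loops. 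Transporting from $v_F$ to $v_0$ uses each connection's own tree factors, and by your own telescoping computation $M_{\tilde\Phi}(F,v_F)$ equals $M_\Phi(F,v_F)=M_{\Phi'}(F,v_F)$ only after conjugating by the gauge matrix $A_{v_F}$. So this step secretly requires $A_{v_F}$ to commute with the face monodromy, which is not automatic when the matrices involved do not commute.

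This cannot be repaired without reinterpreting the statement: with each face based at its own (arbitrary) vertex, the claim itself fails for general $\GL_n$ connections. On the theta graph ($b$, $w$, three parallel edges $e_1,e_2,e_3$, internal faces $F_1$ between $e_1,e_2$ and $F_2$ between $e_2,e_3$), take $\Phi=(X,I,X)$ and $\Phi'=(YX,\,Y,\,XY)$ with $Y$ not commuting with $X$. Then the $F_1$ monodromy based at $b$ and the $F_2$ monodromy based at $w$ agree for $\Phi$ and $\Phi'$ (both are $X^{\pm1}$, depending on traversal direction), but gauge equivalence $\Phi'(e_i)=A_w\Phi(e_i)A_b$ would force both $A_b$ and $A_bY^{-1}$ into the centralizer of $X$, hence $Y$ to commute with $X$, a contradiction. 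So the equivalence must either be read with base points handled compatibly (for example, monodromies agreeing at every base point), or---as in the only place the paper applies it, Proposition \ref{theo:quantumidentityconn}---restricted to \emph{diagonal} connections, where all monodromies and all gauge matrices commute, conjugation is invisible, and your argument (like the paper's equally terse one) closes with no gap. Your ``reverse'' direction carries the matching caveat: for general connections you obtain monodromies equal only up to conjugation by the gauge at each face's own base point (not by a single $A_{v_0}$), and this is literal equality precisely in the diagonal case.
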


\begin{proof}
The forward direction is clear.  The backward direction is by a standard spanning tree argument, similar to that presented in Section \ref{sssec:combinatorialconstructionofquantumidentityconnection}.  Indeed, trivializing the connection on a spanning tree of $G$ determines a sequence of gauge transformations taking $\Phi$ to $\Phi^\prime$ (which can be taken to be diagonal when $\Phi$ and $\Phi^\prime$ are diagonal).  The last statement is clear.
\end{proof}

\begin{rema}\label{rema:diagonal-connections-and-edge-colorings}
When $\Phi$ is a diagonal $\GL_n$-connection, then in the formula for $\trqc^\prime(\Phi,m)$ from (\ref{alttrace}) (and hence for $\trqc(\Phi,m)$ as well by Proposition \ref{prop:comparisonofqtraces}), only edge colorings $c$ (Definition \ref{defi:edge-coloring}) contribute to the sum over half-edge colorings.
\end{rema}

\begin{prop}\label{prop:tracefordiagconn}
For $\Phi$ a diagonal $\GL_n$-connection on $G$, and $m\in\Omega_n$ a multiweb,
\be \label{eq:formulaoftracefordiagonalconnection}
\trqc(\Phi,m) = \left(\prod_{e\in E}q^{\binom{m_e}{2}}\right)\sum_{c} \prod_{v\in V} (-q)^{\ell(\sigma_v)} \prod_{e\in E}  \prod_{i\in S_e} \Phi(e)_{ii}\in\mathbb{C}
\ee 
where the sum is over all edge colorings $c$ of the multiweb $m$.
\end{prop}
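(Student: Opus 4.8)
The plan is to reduce directly to the alternative formula for the quantum trace established in the preceding subsection. Since a diagonal $\GL_n$-connection is in particular a quantum connection (Example \ref{exam:quantummatrixexamples} (\ref{item:exam:quantummatrixexamples})), Proposition \ref{prop:comparisonofqtraces} applies and gives $\trqc(\Phi,m)=\trqc^\prime(\Phi,m)$, where $\trqc^\prime$ is the half-edge-coloring sum (\ref{alttrace}). Thus it suffices to evaluate the right-hand side of (\ref{alttrace}) in the diagonal case.

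The first simplification is to cut down the sum over half-edge colorings. By Remark \ref{rema:diagonal-connections-and-edge-colorings}, when $\Phi$ is diagonal only those half-edge colorings $c$ with $S_e=T_e$ for all edges $e$ of $m$ --- that is, the edge colorings of Definition \ref{defi:edge-coloring} --- contribute a nonzero term; every other half-edge coloring contributes $0$. Hence the sum over $c$ in (\ref{alttrace}) may be replaced by the sum over edge colorings, matching the sum appearing in (\ref{eq:formulaoftracefordiagonalconnection}).

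The one remaining computation is the evaluation of each quantum minor $\det_{q,S_e,T_e}(\Phi(e))=\det_{q,S_e,S_e}(\Phi(e))$ for a diagonal matrix $\Phi(e)$. Here the relevant $m_e\times m_e$ quantum submatrix $(\Phi(e)_{ij})_{i,j\in S_e}$ is itself diagonal, so in the defining sum (Definition \ref{defi:qdeterminant}) every summand indexed by a nonidentity permutation contains an off-diagonal (hence zero) factor and vanishes. Only the identity permutation survives, with sign $(-q)^{\ell(\mathrm{id})}=1$, yielding $\det_{q,S_e,S_e}(\Phi(e))=\prod_{i\in S_e}\Phi(e)_{ii}$; the ordering of this product is immaterial since the diagonal entries are scalars. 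Substituting this into (\ref{alttrace}) produces exactly (\ref{eq:formulaoftracefordiagonalconnection}), completing the argument.

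There is no serious obstacle here: the content is entirely carried by Proposition \ref{prop:comparisonofqtraces} and Remark \ref{rema:diagonal-connections-and-edge-colorings}, both already available, together with the elementary fact that the quantum determinant of a diagonal matrix collapses to the product of its diagonal entries. The only point demanding any care is the bookkeeping that the $q$-power prefactors $\prod_{e\in E} q^{\binom{m_e}{2}}$ and $\prod_{v\in V}(-q)^{\ell(\sigma_v)}$ in (\ref{alttrace}) are carried over unchanged, which they are, since neither depends on the diagonal specialization of $\Phi$.
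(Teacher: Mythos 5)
Your proof is correct and follows essentially the same route as the paper's: both reduce to the alternative trace formula (\ref{alttrace}) via Proposition \ref{prop:comparisonofqtraces}, invoke Remark \ref{rema:diagonal-connections-and-edge-colorings} to restrict to edge colorings, and observe that the quantum minor of a diagonal submatrix collapses to the product of its diagonal entries. Your explicit identity-permutation argument is just an unpacking of the paper's one-line observation that $\det_{q,S_e,S_e}(\Phi(e))=\Det_{S_e,S_e}(\Phi(e))$ in the diagonal case.
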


\begin{proof}
This follows immediately from Remark \ref{rema:diagonal-connections-and-edge-colorings} together with the simple observation that the quantum minor $\det_{q,S_e,S_e}(\Phi(e))$ equals the classical minor $\Det_{S_e,S_e}(\Phi(e))$ for diagonal $\GL_n$-connections $\Phi$.
\end{proof}

\begin{prop}\label{coro:diagconjugation}
If $\Phi$ and $\Phi^\prime$ are diagonal $\GL_n$-connections on $G$ that are diagonally $\SL_n$-gauge equivalent, then for all multiwebs $m\in\Omega_n$
$$
\trqc(\Phi,m)=\trqc(\Phi^\prime,m)\in\mathbb{C}.
$$
\end{prop}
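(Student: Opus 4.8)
The plan is to compute both sides using the explicit edge-coloring formula of Proposition \ref{prop:tracefordiagconn} and to match the two sums term by term. Recall that
\[
\trqc(\Phi,m) = \left(\prod_{e\in E}q^{\binom{m_e}{2}}\right)\sum_{c} \prod_{v\in V} (-q)^{\ell(\sigma_v)} \prod_{e\in E}\prod_{i\in S_e} \Phi(e)_{ii},
\]
and observe that the prefactor $\prod_{e\in E} q^{\binom{m_e}{2}}$, the sign weights $(-q)^{\ell(\sigma_v)}$, and the indexing set of edge colorings $c$ all depend only on the multiweb $m$ (and the ciliation), not on the connection. Hence it suffices to show that for each fixed edge coloring $c$ the connection-dependent product $\prod_{e\in E}\prod_{i\in S_e}\Phi(e)_{ii}$ is unchanged when $\Phi$ is replaced by $\Phi'$.

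Next I would write the diagonal $\SL_n$-gauge equivalence entrywise. Writing $A_v=\mathrm{diag}(a_{v,1},\dots,a_{v,n})$ with $\prod_{i=1}^n a_{v,i}=\det(A_v)=1$, the relation $\Phi'(e)=A_{w}\Phi(e)A_{b}$, for an edge $e$ with black endpoint $b$ and white endpoint $w$, gives $\Phi'(e)_{ii}=a_{w,i}\,\Phi(e)_{ii}\,a_{b,i}$ since all matrices involved are diagonal. Substituting and separating out the gauge scalars yields
\[
\prod_{e\in E}\prod_{i\in S_e}\Phi'(e)_{ii}
= \left(\prod_{e\in E}\prod_{i\in S_e}\Phi(e)_{ii}\right)\left(\prod_{e\in E}\prod_{i\in S_e} a_{w(e),i}\,a_{b(e),i}\right),
\]
where $b(e),w(e)$ denote the black and white endpoints of $e$. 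It then remains to prove that the second, purely gauge, factor equals $1$ for every edge coloring $c$.

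The key observation is that, by the definition of an edge coloring (Definition \ref{defi:edge-coloring}), at each vertex $v$ the subsets $S_e$ attached to the edges $e$ incident to $v$ are disjoint and partition $\{1,2,\dots,n\}$ (their sizes $m_e$ summing to $n$). Regrouping the gauge factor by vertex, the scalar $a_{v,i}$ therefore occurs exactly once for each color $i\in\{1,\dots,n\}$, so the total contribution of $v$ is $\prod_{i=1}^n a_{v,i}=\det(A_v)=1$. Thus the entire gauge factor is $\prod_{v\in V}\det(A_v)=1$, the two connection-dependent products agree for every $c$, and summing over colorings gives $\trqc(\Phi,m)=\trqc(\Phi',m)$.

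The argument is essentially bookkeeping, and the only point requiring care is the regrouping of the gauge scalars by vertex together with the partition property of edge colorings; it is precisely here that the determinant-one hypothesis enters. Under a merely $\GL_n$-gauge equivalence the same computation would instead produce the coloring-independent global scalar $\prod_{v\in V}\det(A_v)$, which need not equal $1$, so the $\SL_n$ assumption is exactly what is needed to obtain equality rather than a nonzero rescaling.
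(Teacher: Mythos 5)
Your proposal is correct and follows essentially the same argument as the paper: both use the edge-coloring formula for diagonal connections (Proposition \ref{prop:tracefordiagconn}), observe that at each vertex the color subsets of the incident edges partition $\{1,\dots,n\}$ so that every diagonal entry of each gauge matrix $A_v$ appears exactly once, and conclude that the gauge factor collapses to $\prod_{v}\det(A_v)=1$. Your write-up merely makes explicit the entrywise bookkeeping that the paper's shorter proof leaves implicit.
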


\begin{proof}
Since for any given edge coloring $c$ the colors on the edges incident to $v$ give some permutation of $\{1,2,\dots,n\}$, all entries of the diagonal matrices $A_v$ enter into the rightmost product in Proposition \ref{prop:tracefordiagconn} exactly once. Since the product of the diagonal entries of $A_v\in\SL_n$ is $1$, the overall product is unchanged.
\end{proof}

\subsection{Quantum identity connection}\label{ssec:quantumidentityconnection}

\begin{defi}
Define the \emph{quantum identity matrix} $Q=Q_n\in\SL_n$ by
$$ 
Q = \begin{pmatrix} 
q^{n-1} & 0          & \dots&  & 0 \\
0          & q^{n-3} &&&\vdots \\
\vdots    &      & \ddots  &&\\
         &      & &q^{-(n-3)} & 0 \\
0          & \dots     && 0       & q^{-(n-1)}
\end{pmatrix}.
$$
Note that its matrix trace is simply the quantum integer $\Tr(Q) = [n]$. 

\emph{Sikora's connection} \cite[equation (5)]{sikora1}, 
here denoted $\Psi$, is the (not unique) diagonal $\SL_n$-connection constructed as follows.    For this geometric construction, it is helpful to think of the cilia as having a specified  angle in $[0,2\pi)$ around each vertex.    In particular, an isotopy of $G$ can also rotate the cilia (not allowing the cilia to cross any edges upon this rotation).  We now isotope $G$ so that edges are smooth curves and at each black vertex edges start in the direction  $(0,-1)$ (this is their initial tangent vector direction) and at each white vertex edges end in the direction $(0,-1)$, while at black vertices the cilia point north and at white vertices the cilia point south.  (Compare Figure \ref{fig:xmastreechandelier}.)  Then each edge $e$, oriented from black to white, has integrated curvature $2\pi\omega_e$, where $\omega_e$ is the integer CCW `winding number' associated to that edge.  On this edge put the connection  $Q^{-\omega_e}$.
\end{defi}

The following result is essentially a reformulation of Sikora \cite[Theorem 9]{sikora1}.  Recall the notation $\trq(m)$ from Remark \ref{rema:sikoratraceofmultiweb}.

\begin{theo}\label{theo:sikoraconnection}
For Sikora's connection $\Psi$ one has that for all multiwebs $m\in\Omega_n$
$$
\trq(m)=\trqc(\Psi,m)\in\mathbb{C}.
$$
\end{theo}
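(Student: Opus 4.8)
The plan is to reduce the claimed equality $\trq(m)=\trqc(\Psi,m)$ to a purely three-dimensional computation of $\trq(\mathbf{W}_m)$, and then to match that computation against the combinatorial formula for $\trqc(\Psi,m)$ coming from Proposition \ref{prop:tracefordiagconn}. Since $\Psi$ is a diagonal $\SL_n$-connection, Proposition \ref{prop:tracefordiagconn} already expresses $\trqc(\Psi,m)$ as a sum over edge colorings $c$ of $\left(\prod_e q^{\binom{m_e}{2}}\right)\prod_v(-q)^{\ell(\sigma_v)}\prod_e\prod_{i\in S_e}\Psi(e)_{ii}$, where $\Psi(e)=Q^{-\omega_e}$. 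Because $Q$ is diagonal with entries $Q_{ii}=q^{n+1-2i}$, the edge weight $\prod_{i\in S_e}\Psi(e)_{ii}$ is simply $q^{-\omega_e\sum_{i\in S_e}(n+1-2i)}$, a completely explicit monomial. So the right-hand side is understood once this edge-coloring sum is written out.

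First I would compute the left-hand side $\trq(\mathbf{W}_m)$ using the state-sum (Reshetikhin--Turaev) construction from Section \ref{sssec:proofofLaurentproperty}, applied to the split web $\mathbf{W}_m$ placed in the standardized position used to define $\Psi$: every edge drawn with its black end departing in direction $(0,-1)$, white end arriving in direction $(0,-1)$, cilia pointing north at black and south at white vertices. The key point is that in this normal form, the RT contraction organizes into exactly a sum over edge colorings: the codeterminant tensor $T_+$ (resp. dual $T_-$) at a white (resp. black) vertex produces the factor $(-q)^{\ell(\sigma_v)}$ together with the vertex-ordering data, matching $\mathrm{codet}_q^*$ and $\mathrm{codet}_q$, while each edge strand carrying color $i$ contributes, through the cap/cup and the $R$-matrix crossings recorded by the winding number $\omega_e$, precisely the scalar $q^{(\text{power depending on }i\text{ and }\omega_e)}$. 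The normalization by $\prod_e[m_e]!$ converts the split-web trace into $\trq(m)$. So both sides become sums over the same index set (edge colorings), and the task reduces to matching weights term by term.

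The heart of the argument, and the step I expect to be the main obstacle, is the bookkeeping that shows the per-edge scalar produced by the RT state sum along an edge of winding number $\omega_e$ carrying color-subset $S_e$ equals exactly $\prod_{i\in S_e}Q_{ii}^{-\omega_e}=q^{-\omega_e\sum_{i\in S_e}(n+1-2i)}$. This requires carefully tracking the contributions of the duality maps in Figure \ref{fig:xmastreechandelier} (the caps and cups carry the $q^{n+1-2i}$ and $q^{2i-n-1}$ factors), the self-crossings of a framed strand that accumulate curvature $2\pi\omega_e$, and the framing/kink relation of Figure \ref{fig:framing}; the diagonal entries of $Q$ are engineered to be exactly the eigenvalues appearing in these duality morphisms, so the winding number $\omega_e$ controls how many times each eigenvalue is applied. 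One must also confirm that the vertex factors $(-q)^{\ell(\sigma_v)}$ and the edge-splitting factors $q^{\binom{m_e}{2}}[m_e]!$ appear with identical exponents on both sides; the $[m_e]!$ cancels against the denominator in $\trq(m)$, and the $q^{\binom{m_e}{2}}$ survives to match Proposition \ref{prop:tracefordiagconn}.

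A cleaner route that sidesteps some of this, and which I would pursue in parallel, is to invoke the cited reformulation: Theorem \ref{theo:sikoraconnection} is stated to be essentially \cite[Theorem 9]{sikora1}. If Sikora's theorem already asserts that the RT invariant $\trq(\mathbf{W})$ of a web in this normal form equals the tensor contraction weighted by $Q^{-\omega_e}$ on edges, then the proof is largely a matter of translating Sikora's statement into the present notation, checking that the diagonal connection he uses coincides with $\Psi$, and verifying that the passage from proper webs $\mathbf{W}_m$ to general multiwebs $m$ respects the normalization: namely that dividing by $\prod_e[m_e]!$ on the three-dimensional side corresponds to passing from the split graph $G_m$ (with the non-edge-commuting induced connection) to $G$ on the two-dimensional side, exactly as in Definition \ref{defi:firstquantumtraceofmultiweb} and Proposition \ref{prop:comparisonofqtraces}. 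In that case the main obstacle becomes purely one of faithful bookkeeping between the two conventions rather than a new computation, and I would present the proof as a verification that the two state sums have matching terms.
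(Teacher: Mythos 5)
Your proposal is correct, and your ``cleaner route'' is exactly the paper's proof: the paper quotes the proof of \cite[Theorem 9]{sikora1} to obtain, for a \emph{proper} multiweb, the edge-coloring formula $\trq(m)=\sum_c\prod_{v}(-q)^{\ell(\sigma_v)}\prod_e\prod_{i\in S_e}\Psi(e)_{ii}$, identifies the right-hand side with $\trqc(\Psi,m)$ via Proposition \ref{prop:tracefordiagconn} (all factors $q^{\binom{m_e}{2}}$ equal $1$ for proper $m$), and then deduces the general case by applying the proper case to the split graph $G_m$ and dividing both sides by $\prod_e[m_e]!$, which is precisely how $\trq(m)$ and $\trqc(\Psi,m)$ are defined; the one point needing comment --- flagged parenthetically in the paper --- is that the induced connection $\Psi(G_m)$ is again a Sikora connection, since parallel split copies inherit the winding number $\omega_e$ of the original edge. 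Your primary route, re-deriving the edge-coloring formula from the Reshetikhin--Turaev state sum of Section \ref{sssec:proofofLaurentproperty} in Sikora's normal position, is a genuinely more self-contained alternative: it amounts to inlining the proof of Sikora's theorem rather than citing it, and it does go through, with two simplifications relative to what you anticipate. First, since $G$ is planarly embedded, the diagram of $\mathbf{W}_m$ in normal position has no crossings at all, so no $R$-matrices and no kink relations enter; the caps and cups of Figure \ref{fig:xmastreechandelier} are diagonal in the standard basis, so colors propagate unchanged along strands, the state sum collapses onto edge colorings, and a strand of color $i$ contributes a product over its local extrema of factors $1$ or $q^{\pm(n+1-2i)}$ (the cap/cup eigenvalues), which accumulate, with the paper's orientation conventions, to the $i$-th diagonal entry of $Q^{-\omega_e}$; the passage from ordered strand colorings of $\mathbf{W}_m$ to unordered edge colorings of $m$ produces the factor $q^{\binom{m_e}{2}}[m_e]!$ exactly as in the proof of Theorem \ref{theo:polynomial}, after which the $[m_e]!$ cancels as you say. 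Second, a small correction: for the scalar-valued $\Psi$ the induced connection on $G_m$ \emph{is} edge-commuting (its entries are powers of $q$), so the caveat about noncommuting variables on parallel edges that you import from Definition \ref{defi:planarquantumtrace} is vacuous here. In short, the citation buys the paper brevity; your computation would buy independence from the reference at the cost of the bookkeeping you describe, all of which is sound.
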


\begin{proof}
By the proof of Sikora \cite[Theorem 9]{sikora1}, for $m\in\Omega_n$ a proper multiweb
$$
\trq(m)=\sum_{c} \prod_{v\in V} (-q)^{\ell(\sigma_v)} \prod_{e\in E}  \prod_{i\in S_e} \Psi(e)_{ii}
$$
where the sum is over all edge colorings $c$ of the proper multiweb $m$.  By Proposition \ref{prop:tracefordiagconn}, $\trq(m)=\trqc(\Psi,m)$ as desired.  For $m$ a general multiweb, the result follows immediately from the proper multiweb case together with the definitions of $\trq(m)$ and $\trqc(\Psi,m)$.  (Here it is used that the connection $\Psi(G_m)$ on $G_m$ induced by $\Psi$ remains a Sikora connection.)
\end{proof}

\begin{defi}\label{defi:qidentityconn}
Suppose $\Phi$ is a diagonal $\SL_n$-connection on $G$ such that   any face of length $l$ with $k$ inward-pointing cilia has CCW monodromy $Q^{\frac{l}2-1-k}$.    Such a $\SL_n$-connection $\Phi$ is called a \emph{quantum identity connection} and is denoted $I_q$.    (Note, by Proposition \ref{prop:diagonalgaugeequivalence}, the quantum identity connection is unique up to diagonal gauge transformations.)  
\end{defi}

\begin{theo}\label{theo:quantumidentityconn}
For a quantum identity connection $I_q$ one has that for all multiwebs $m\in\Omega_n$
$$
\trq(m)=\trqc(I_q,m)\in\mathbb{C}
$$
and so $\Zq=Z(I_q)$. 
\end{theo}

\begin{proof} 
By Proposition \ref{prop:diagonalgaugeequivalence} and Proposition \ref{coro:diagconjugation} combined with Theorem \ref{theo:sikoraconnection}, it suffices to show that $I_q$ has the same CCW face monodromies as Sikora's connection $\Psi$. Consider the geometric construction of $\Psi$ above.  For a given face, traversed CCW, the total curvature of the tangent vector is $+2\pi$.   However when traversing a face, at each vertex with external cilium the local curvature there is $-\pi$, and at each vertex with internal cilium the local curvature is $+\pi$. The total curvature contribution along the edges is therefore $2\pi+\pi l_{\mathrm{int}}-\pi l_{\mathrm{ext}}$, where $l_{\mathrm{int}}$ is the number of vertices along the face with internal cilia and $l_{\mathrm{ext}}$ is the number of vertices along the face with external cilia. Since $l=l_{\mathrm{int}}+l_{\mathrm{ext}}$ the curvature contribution along the edges is $2\pi+2\pi l_{\mathrm{int}}-\pi l.$ Dividing by $2\pi$, the CCW monodromy of $\Psi$ around the face is $Q^{-(1+k-\frac{l}{2})}=Q^{\frac{l}2-1-k}$ as desired.  The last statement follows immediately from the definitions.
\end{proof}

\begin{rema}\label{rema:moreremarksregardingquantumidentityconnection}
\begin{enumerate}
\item
In particular, from the proof of Theorem \ref{theo:quantumidentityconn} it follows that Sikora's connection $\Psi$ is an example of a quantum identity connection, $I_q=\Psi$.
\item
It was shown in Theorem \ref{theo:polynomial} that $\trq(m)$ is a Laurent polynomial in $q^\frac{1}{n}$ for any multiweb $m\in\Omega_n$.  It follows from Theorem \ref{theo:quantumidentityconn} that, in the planar setting, $\trq(m)$ is in fact a Laurent polynomial in $q$, rather than $q^\frac{1}{n}$, by definition of $\trqc(I_q,m)$ as the contraction of codeterminants and dual codeterminants along the quantum identity connection $I_q$.  This will be strengthened, yet again, in Theorem \ref{theo:symmetricZq}.
\item\label{item2:rema:qtraceispolyexpressioninqconnection}
Note that the obvious adaptation of Proposition \ref{prop:ciliadependence} to the setting of graph connections holds when $q=1$ (following from Proposition \ref{prop:comparisonofqtraces}) but does not hold for general $q$, for either parity of $n$, due to the appearance of the $q^{\ell(\sigma_v)}$ terms in the definition of $\trqc^\prime(\Phi_q,m)$.  In other words, modifying the ciliation $L$, without also modifying the quantum connection $\Phi_q$, will not preserve the quantum traces $\trqc(\Phi_q,m)$ even up to a sign.  

How this is reconciled with Proposition \ref{prop:ciliadependence}, which is valid for general $q$, and Theorem \ref{theo:quantumidentityconn} is that when the ciliation $L$ is modified, the quantum identity connection $I_q$ (constructed, say, as Sikora's connection) is also modified in concert, so that the quantum trace $\trqc(I_q,m)$ is preserved (up to a sign) for any multiweb $m\in\Omega_n$ (and, for $n$ odd, $\trqc(I_q,m)$ is unchanged).
\item\label{rema:classicaltrace}
It immediately follows from the definitions that when $q=1$ then $Q=I$ hence the identity connection $I$ is a quantum identity connection, $I_1=I$.  In particular, $\mathbf{tr}_1(m)=\mathrm{tr}_1(I,m)$ by Theorem \ref{theo:quantumidentityconn}.  Proposition \ref{id} discusses when the identity connection $I$ is a quantum identity connection, $I_q=I$, for all $q$.  

Lastly, note that if a diagonal $\Phi$  is a quantum identity connection for $q=1$, $I_1=\Phi$,  then $\Phi$ need not equal the identity connection $I$, but it is diagonally gauge equivalent to $I$.
\end{enumerate}
\end{rema}

\begin{defi}\label{defi:classicaltrace}
The quantity 
$$
\mathbf{tr}_1(m)=\mathrm{tr}_1(I,m)
$$
is called the \emph{classical trace} of a multiweb $m\in\Omega_n$, and by definition it is an integer. In fact, it counts edge $n$-colorings, see Proposition \ref{prop:positivecilia}.
\end{defi}

\subsubsection{Combinatorial construction of quantum identity connection}\label{sssec:combinatorialconstructionofquantumidentityconnection}

Choose a spanning tree of $G$ and put the identity matrix $I$ on all the edges of the spanning tree.  Progressively working outwards from the leaves of the dual tree,    for each edge $e$ not in the spanning tree choose $I_q(e)=Q^\alpha$ for the appropriate power $\alpha$ such that the defining monodromy condition (Definition \ref{defi:qidentityconn}) of a quantum identity connection is met (on all faces contained in the cycle in $G$ associated to $e$).

\subsection{Positivity}

\begin{defi}\label{defi:positiveciliation}
A ciliation $L$ of $G$ is \emph{positive}, denoted $L=L^+$, if there are an even number of cilia pointing into every face of $G$.  
\end{defi}

\begin{rema}
As explained in \cite{DKS}, one way to produce a positive ciliation $L^+$ of $G$ is to first choose a dimer cover of $G$ and then to define the cilia such that for each dimer, the cilia at its vertices go into the same adjacent face.
\end{rema}

\begin{prop}[\cite{DKS}]\label{prop:positivecilia}
For all multiwebs $m\in\Omega_n$, there exists at least one edge coloring of $m$, and the classical trace $\mathrm{tr}_1(I,m)\in\Z\setminus\{0\}$ equals $\epsilon$ times the number of edge colorings of $m$, where the sign $\epsilon\in\{\pm1\}$ depends on the ciliation $L$.  When $L=L^+$ is a positive ciliation, then $\epsilon=+1$.  
\qed\end{prop}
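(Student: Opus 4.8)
The plan is to pass to the split graph, reduce the signed trace to a pure sign count, and only then pin down the sign. Specializing Proposition \ref{prop:tracefordiagconn} to $\Phi=I$ and $q=1$, the proper split multiweb $G_m$ satisfies $\mathrm{tr}_1(I,G_m)=\sum_{\widehat c}\prod_{v}(-1)^{\ell(\sigma_v)}$, the sum over proper edge colorings $\widehat c$ of $G_m$, equivalently decompositions $G_m=M_1\cup\dots\cup M_n$ into $n$ perfect matchings (color $k$ being $M_k$). By the normalization in Definition \ref{defi:planarquantumtrace}, $\mathrm{tr}_1(I,m)=\mathrm{tr}_1(I,G_m)/\prod_e m_e!$, and each edge coloring $c$ of $m$ lifts to exactly $\prod_e m_e!$ colorings $\widehat c$ (order the colors of $S_e$ on the $m_e$ parallel copies, independently per edge). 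At least one coloring exists: by \cite{LovaszPlummer} the multiweb $m$ is an overlay of $n$ dimer covers, and coloring the $k$-th cover by $k$ produces an edge coloring. Hence it suffices to prove that $\mathrm{sgn}(\widehat c):=\prod_v(-1)^{\ell(\sigma_v)}$ is a constant $\epsilon\in\{\pm1\}$ over all $\widehat c$, and that $\epsilon=+1$ when $L$ is positive; then $\mathrm{tr}_1(I,m)=\epsilon\cdot\#\{\text{edge colorings of }m\}\neq0$.

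For the constancy I would use Kempe moves. For colors $i<j$ the subgraph $M_i\cup M_j$ is $2$-regular, hence a disjoint union of cycles, all of even length since $G_m$ is bipartite. Swapping the two colors along one such cycle $\gamma_0$ yields another decomposition, and any two decompositions are joined by a sequence of such swaps. A single swap transposes the $M_i$- and $M_j$-half-edges at each vertex of $\gamma_0$, so it multiplies $(-1)^{\ell(\sigma_v)}$ by $-1$ at each $v\in\gamma_0$; as $\gamma_0$ has even length, an even number of flips occurs and $\mathrm{sgn}(\widehat c)$ is unchanged. Therefore $\mathrm{sgn}(\widehat c)=\epsilon$ is constant.

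To identify $\epsilon$ and prove positivity I would factor the sign over pairs of colors. Writing $\ell(\sigma_v)=\sum_{i<j}\mathrm{inv}_{ij}(v)$, where $\mathrm{inv}_{ij}(v)=1$ exactly when the $M_j$-half-edge precedes the $M_i$-half-edge in the cilia order at $v$, gives $\mathrm{sgn}(\widehat c)=\prod_{i<j}\prod_{\gamma}(-1)^{\sum_{v\in\gamma}\mathrm{inv}_{ij}(v)}$, the inner product over the even loops $\gamma$ of the double dimer $M_i\cup M_j$ (each vertex lies on exactly one such loop). Swapping $i\leftrightarrow j$ flips all, evenly many, bits along $\gamma$, so each loop factor $\mathrm{sgn}(\gamma)$ depends only on $\gamma$ as an embedded closed curve and on the ciliation. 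I would then compute $\mathrm{sgn}(\gamma)$ by a turning-number argument parallel to the curvature computation in the proof of Proposition \ref{theo:quantumidentityconn}: traversing $\gamma$ the tangent direction turns by $\pm2\pi$, each vertex contributes a local turn, and the bit $\mathrm{inv}_{ij}(v)$ records on which side of the two $\gamma$-edges the cilium sits; accumulating these around $\gamma$ expresses $\sum_{v\in\gamma}\mathrm{inv}_{ij}(v)\bmod 2$ in terms of the cilia pointing into the faces enclosed by $\gamma$. When $L$ is positive (Definition \ref{defi:positiveciliation}) every enclosed face carries an even number of inward cilia, forcing $\mathrm{sgn}(\gamma)=+1$ for every loop and hence $\epsilon=+1$.

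The step I expect to be hardest is precisely this rotation-number evaluation of the loop sign $\mathrm{sgn}(\gamma)$ and its exact matching with the positivity condition: one must correctly account for the $\pm2\pi$ turning, the black/white orientation conventions (CCW versus CW), and the cilia at the vertices lying on $\gamma$ itself, so that after cancellation the parity of $\sum_{v\in\gamma}\mathrm{inv}_{ij}(v)$ is governed precisely by the per-face inward-cilia counts. This is exactly the bookkeeping for which Definition \ref{defi:positiveciliation} appears tailored, and the outcome should be consistent with the cilia-dependence already recorded in Proposition \ref{prop:ciliadependence}. A secondary technical point is the Kempe-connectivity of the decompositions of a regular bipartite graph into perfect matchings used in the constancy step, which is standard but would need either a citation or a short inductive argument.
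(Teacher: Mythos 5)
Your proposal cannot be checked against a proof in this paper, because the paper gives none: Proposition \ref{prop:positivecilia} is imported from \cite{DKS} with a \qed, so what you wrote must stand on its own as a reconstruction of the DKS argument. Its skeleton (pass to the split graph, prove that the sign $\prod_v(-1)^{\ell(\sigma_v)}$ is constant over colorings, then identify the constant by a per-loop planar computation) is the right shape, and your reduction and existence steps are fine; but both load-bearing steps have genuine gaps. The constancy step rests on a claim that is false in general: Kempe connectivity of the decompositions of an $n$-regular bipartite graph into perfect matchings fails already for $K_{3,3}$. There, two disjoint perfect matchings correspond to permutations $\sigma,\tau\in\mathfrak{S}_3$ with $\sigma\tau^{-1}$ fixed-point free, so their union is always a single $6$-cycle; every Kempe swap therefore exchanges two color classes wholesale, and the two distinct $1$-factorizations of $K_{3,3}$ (the two cosets of $A_3$, viewing matchings as permutations) lie in different Kempe classes. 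So the claim is not ``standard''; if it holds for planar bipartite regular multigraphs at all, its proof must use planarity and is likely no easier than the proposition itself. The step is, however, avoidable: rotating a cilium at $v$ past an edge $e$ flips exactly $m_e(n-m_e)$ inversions in $\ell(\sigma_v)$ for \emph{every} coloring simultaneously (the coloring-by-coloring refinement of Proposition \ref{prop:ciliadependence}), so changing the ciliation multiplies all colorings' signs by a common factor; since any two ciliations differ by such rotations and positive ciliations exist, it suffices to show that every coloring has sign $+1$ for one positive ciliation. But this shifts the entire burden onto your final step.

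That final step --- the actual content of the theorem --- is only sketched, and the sketch's bookkeeping does not close as stated. Summing the positivity condition of Definition \ref{defi:positiveciliation} over the faces enclosed by a loop $\gamma$ of $M_i\cup M_j$ yields $\#\{\text{vertices strictly inside }\gamma\}+\#\{v\in\gamma:\text{cilium points inside }\gamma\}\equiv 0\pmod 2$: the inward-cilia count you invoke cannot be decoupled from the count of \emph{enclosed vertices}. The lemma one actually needs is of the form $\sum_{v\in\gamma}\mathrm{inv}_{ij}(v)\equiv \#\{v\in\gamma:\text{cilium inside}\}+\#\{\text{vertices inside }\gamma\}\pmod 2$, and the second term is essential: for $n\ge 3$ the component of the split web containing $\gamma$ can itself reach inside $\gamma$ through the other $n-2$ half-edges at $\gamma$'s vertices, so the number of enclosed vertices need not be even (unlike the $n=2$ case, where the enclosed vertices are partitioned into whole components of even size). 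Proving this parity identity is precisely the turning-number-plus-Euler-formula argument (compare Lemma \ref{morse} and the curvature computation in Proposition \ref{theo:quantumidentityconn}) that you flag as the hardest point but do not carry out: the relation between $\mathrm{inv}_{ij}(v)$ and ``cilium inside'' carries a coloring- and turning-dependent correction at each vertex of $\gamma$, and one must show its total parity around $\gamma$ equals the enclosed-vertex count. Until that lemma is established, the proposal is an outline whose two critical steps rest, respectively, on a false general claim and on an unproved assertion.
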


\begin{rema}\label{rema:consequencesofq=1positivecilia}
\begin{enumerate}
\item\label{item3:rema:consequencesofq=1positivecilia}
By Propositions \ref{prop:comparisonofqtraces} and \ref{prop:positivecilia}, it follows that for a positive ciliation $L^+$ the quantum trace $\trqc(\Phi_q,m)$ is a positive polynomial expression in $q$ (over $\mathbb{A}$).  Compare Remark \ref{rema:qtraceispolyexpressioninqconnection}
(\ref{item:qtraceispolyexpressioninqconnection}).
\item\label{item:rema:consequencesofq=1positivecilia}
It is not true that $\trqc(\Phi_q,m)$ is independent of the choice of positive ciliation $L^+$ for a general quantum connection $\Phi_q$.  
\item\label{item2:rema:consequencesofq=1positivecilia}
It is true that $\trq(m)$ is independent of the choice of positive ciliation $L^+$.  Indeed, $\trq(m)$ only changes up to sign when changing cilia by Proposition \ref{prop:ciliadependence}, while $\mathbf{tr}_1(m)=\mathrm{tr}_1(I,m)$ (Remark \ref{rema:moreremarksregardingquantumidentityconnection} (\ref{rema:classicaltrace})) is positive for $L^+$ by Proposition \ref{prop:positivecilia}.  
\item
Despite Remark \ref{rema:consequencesofq=1positivecilia} (\ref{item:rema:consequencesofq=1positivecilia}), it is true that $\trqc(\Phi_q,m)$ is independent of the choice of positive ciliation $L^+$ for quantum identity connections $\Phi_q=I_q$.  Indeed, then $\trqc(I_q,m)=\trq(m)$, and the latter is independent of the choice of positive cilia by Remark \ref{rema:consequencesofq=1positivecilia} (\ref{item2:rema:consequencesofq=1positivecilia}).  
\end{enumerate}
\end{rema}

The following statement is immediate from Remark \ref{rema:consequencesofq=1positivecilia}.

\begin{prop}
For any quantum identity connection $I_q$, the quantum partition function $\Zq=\Zqc(I_q)$ is independent of the choice of positive ciliation $L^+$.  
\qed\end{prop}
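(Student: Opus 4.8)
The plan is to reduce everything to the ciliation-independence of the three-dimensional quantum trace $\trq(m)$, which has already been established. Starting from the definition
$$
\Zqc(I_q)=q^{-N\binom{n}{2}}\sum_{m\in\Omega_n}\trqc(I_q,m),
$$
I would first observe that the index set $\Omega_n=\Omega_n(G)$ depends only on the core graph $G$ and not on the ciliation, so passing from one positive ciliation to another does not alter which terms appear in the sum, nor does it affect the prefactor $q^{-N\binom{n}{2}}$. Thus it suffices to show that each individual summand $\trqc(I_q,m)$ is unchanged under a change of positive ciliation.

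Next I would invoke Proposition \ref{theo:quantumidentityconn}, which identifies the two-dimensional trace along the quantum identity connection with the intrinsic three-dimensional quantum trace: $\trqc(I_q,m)=\trq(m)$ for every $m\in\Omega_n$. The right-hand side is manifestly defined without reference to any quantum connection, and by Remark \ref{rema:consequencesofq=1positivecilia} (\ref{item2:rema:consequencesofq=1positivecilia}) it is independent of the choice of positive ciliation $L^+$. Feeding this termwise equality back into the sum shows that $\Zqc(I_q)$ takes the same value for any two positive ciliations, which is the assertion.

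The one point demanding care is the following apparent tension: the quantum identity connection $I_q$ is itself defined through cilia-dependent monodromy conditions (Definition \ref{defi:qidentityconn}), so when the ciliation changes the connection $I_q$ must change as well, and one cannot simply hold $I_q$ fixed. The correct reading of the statement is that for each positive ciliation one uses the corresponding quantum identity connection, and the claim is that the resulting value of $\Zqc$ coincides. This is exactly reconciled by Remark \ref{rema:moreremarksregardingquantumidentityconnection} (\ref{item2:rema:qtraceispolyexpressioninqconnection}): as $L$ is modified, $I_q$ deforms in concert precisely so that the equality $\trqc(I_q,m)=\trq(m)$ persists, and hence the potentially sign-sensitive quantum traces are all pinned to the ciliation-independent quantity $\trq(m)$. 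I do not expect any genuine obstacle beyond this bookkeeping; the substantive input—that $\trq(m)$ does not change sign across positive ciliations, which follows by combining the up-to-sign ciliation dependence of Proposition \ref{prop:ciliadependence} with the positivity of the $q=1$ specialization $\mathbf{tr}_1(m)$ from Proposition \ref{prop:positivecilia}—has already been recorded, so the proof is indeed immediate.
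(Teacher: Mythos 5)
Your proposal is correct and follows essentially the same route as the paper: the paper declares the statement immediate from Remark \ref{rema:consequencesofq=1positivecilia}, whose relevant item reduces termwise to $\trqc(I_q,m)=\trq(m)$ (Proposition \ref{theo:quantumidentityconn}) and then pins down the sign of $\trq(m)$ across positive ciliations by combining Proposition \ref{prop:ciliadependence} with the positivity of $\mathbf{tr}_1(m)$ from Proposition \ref{prop:positivecilia}. Your explicit discussion of how $I_q$ must be updated in concert with the ciliation is a point the paper leaves implicit, and it is handled correctly.
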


\begin{defi}\label{defi:positivequantumpartitionfunction}
The \emph{quantum $n$-dimer partition function} associated to an (unciliated) embedded planar bipartite graph $G$ is $\Zq=\Zqc(I_q)$ evaluated for any choice of positive ciliation $L^+$ and quantum identity connection $I_q$ with respect to $L^+$.  It is denoted $\Zq^+$. 
\end{defi}

\begin{rema}\label{rema:classicalndimerpartitionfunction}
The classical $n$-dimer partition function is, by definition, $Z(n)=(Z_\mathrm{dimer})^n$ (Definition \ref{defi:graphassumptions} and Remark \ref{rema:afterfirstdefofqpartitionfunction} (\ref{item:rema:afterfirstdefofqpartitionfunction})), which is defined for $G$ independent of any cilia considerations (nor does it depend on the particular planar embedding of $G$).  By the results of this subsection, when $q=1$ one has $\mathbf{Z}_1^+=Z(n)$, which is also the same thing as $Z_1(I)$ for the $n\times n$ identity connection $I$ evaluated for any positive ciliation $L^+$.  (See e.g. \cite{DKS}.)
\end{rema}

\begin{exam}
While the trace $\trq(m)$ depends on the cilia only up to a sign, this is not true for $\Zq$.    For example, when $n=2$ and $G$ is the graph with two vertices and two edges, then $\mathbf{Z}_q^+=1+1+[2]_q=2+q+\frac1q$,   while choosing one cilia in and one cilia out then $\mathbf{Z}_q=2-q-\frac1q$. 
\end{exam}

\subsection{Symmetry}

\begin{defi}\label{def:trivialciliation}
A ciliation $L$ of $G$ is \emph{trivial} if the identity connection $I$ is a quantum identity connection, $I_q=I$.
\end{defi}

\begin{prop}\label{id}
If a ciliation $L$ satisfies the property that every bounded face of $G$ has $k=\frac{l}{2}-1$ inward pointing cilia   (where $l$ is the length of the face), then $L$ is trivial. Such a trivial ciliation $L$ exists. 
\end{prop}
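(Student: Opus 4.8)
The plan is to treat the two assertions separately: that the displayed cilia condition forces triviality, and that a ciliation realizing it exists. The first is immediate, so the content lies in the existence statement.

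For the implication, I would argue directly from Definition \ref{defi:qidentityconn}. Since every edge of the identity connection $I$ carries the $n\times n$ identity matrix, the CCW monodromy of $I$ around any face is the identity matrix, which lies in $\SL_n$ and is diagonal. By definition, the diagonal $\SL_n$-connection $I$ is a quantum identity connection precisely when its CCW monodromy around each bounded face of length $l$ with $k$ inward-pointing cilia equals $Q^{\frac{l}{2}-1-k}$. Substituting the hypothesis $k=\frac{l}{2}-1$ gives $Q^{0}=I$, matching the actual monodromy. Hence $I$ is a quantum identity connection, i.e.\ $L$ is trivial (Definition \ref{def:trivialciliation}); note this holds for every admissible $q$ since $Q^{0}=I$ regardless of $q$.

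For existence I would first recast the problem combinatorially. A ciliation amounts to choosing, for each vertex $v$, one corner at $v$ (equivalently one incident face) to host the cilium, and then $k_f$ is the number of chosen corners lying in $f$. Thus I must place exactly one cilium per vertex so that each bounded face $f$ receives exactly $d_f:=\frac{l_f}{2}-1$ of them. I would model this as a bipartite transportation problem: the sources are the $2N$ vertices, each of supply $1$; the sinks are the faces, where each bounded face $f$ has demand $d_f$ and the outer face has demand $d_{\mathrm{out}}:=2N-\sum_{f}d_f$; and $v$ is joined to $f$ when $v$ lies on $\partial f$. A short Euler-characteristic count gives $\sum_{f\text{ bounded}} d_f = 2N-1-\frac{l_{\mathrm{out}}}{2}$ (with $l_{\mathrm{out}}$ the length of the outer boundary), so supply and demand agree and $d_{\mathrm{out}}=1+\frac{l_{\mathrm{out}}}{2}\ge 0$.

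The heart of the matter is feasibility of this transportation problem, which by the standard criterion (equivalently Hoffman's circulation theorem, using integrality of transportation polytopes) reduces to the Hall-type inequality $\sum_{f\in A}d_f\le |\mathrm{inc}(A)|$ for every set $A$ of sinks, where $\mathrm{inc}(A)$ denotes the vertices incident to some face in $A$. For a set $T$ of bounded faces, let $U=\bigcup_{f\in T}\overline{f}$ be the planar subcomplex, with $v_U$ vertices, $E_{\mathrm{int}}$ interior and $E_\partial$ boundary edges, $c$ components, and $h$ holes. Combining Euler's formula $v_U-(E_{\mathrm{int}}+E_\partial)+|T|=c-h$ with $\sum_{f\in T} l_f=2E_{\mathrm{int}}+E_\partial$ yields
$$
|\mathrm{inc}(T)|-\sum_{f\in T}d_f = v_U-\Big(E_{\mathrm{int}}+\tfrac{E_\partial}{2}-|T|\Big)=\tfrac{E_\partial}{2}+c-h .
$$
Because $G$ is bipartite and $2$-connected, $\partial U$ is a disjoint union of $c+h$ even cycles, each of length at least $2$, so $E_\partial\ge 2(c+h)$ and the right side is at least $2c>0$. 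The complementary inequality needed when $A$ contains the outer face reduces, by the same kind of Euler count, to $\sum_{f\in B}d_f\ge v_{\mathrm{int}}(B)$, where $v_{\mathrm{int}}(B)$ counts the vertices all of whose faces lie in $B$. Feasibility then yields an integral assignment, hence the desired ciliation. I expect the main obstacle to be exactly this bookkeeping: unlike interior vertices, boundary vertices may send their cilium outward, so one must check the Hall condition for sink sets containing the outer face and confirm the interior cilia can all be absorbed by bounded faces. Getting the Euler inequalities precisely right — allowing multiple edges so boundary cycles may have length $2$, and using $2$-connectivity to make those boundary cycles vertex-disjoint — is the delicate part; the rest is routine.
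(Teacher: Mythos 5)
Your proof of the first assertion is fine, and your existence strategy is genuinely different from the paper's: the paper first subdivides every face of length $>4$ into quadrilaterals and then constructs the ciliation from a generic harmonic function (one local max and one local min per face determine a ``left/right'' rule placing one cilium in each quad and none in bigons), whereas you set up a vertex-to-face transportation problem and invoke the Gale--Hoffman/Hall feasibility criterion. Your reduction is faithful ($2$-connectivity makes corners at $v$ biject with faces incident to $v$), the supply/demand balance count is right, and your Euler computation for sets $T$ of bounded faces correctly gives $|\mathrm{inc}(T)|-\sum_{f\in T}d_f=\frac{E_\partial}{2}+c-h$. But the proof has a genuine gap at exactly the point you flag yourself: feasibility requires the Hall inequality for \emph{every} set of sinks, and for sets containing the outer face you only state the reduction to $\sum_{f\in B}d_f\ge v_{\mathrm{int}}(B)$ and then assert ``feasibility then yields an integral assignment.'' That inequality is the entire content of the existence statement (it says the interior vertices' cilia can all be absorbed by bounded faces); nothing in your bounded-face computation implies it, and conceding it as ``the main obstacle'' does not discharge it. A second, smaller error: $2$-connectivity of $G$ does \emph{not} make the boundary walks of a face-subcomplex vertex-disjoint cycles (two faces of a $2$-connected graph can share a single vertex, creating pinch points); fortunately your first inequality only needs that the walks are edge-disjoint, even, of length $\ge 2$, and that there are at least $c+h$ of them, which follows from connectivity of the region-adjacency structure on the sphere rather than from vertex-disjointness.

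The missing inequality is true and can be proved by your own method, pushed one step further. Writing $U=\bigcup_{f\in B}\overline{f}$, the same Euler count shows $\sum_{f\in B}d_f\ge v_{\mathrm{int}}(B)$ is equivalent to $v_\partial\ge \frac{E_\partial}{2}+c-h$, where $v_\partial$ is the number of boundary vertices. Now resolve the pinch points: if $x\in\partial U$ has $\partial U$-degree $2d_x$, split it into $d_x$ copies, one per local sector of $U$. The result $\hat{U}$ is a disjoint union of compact planar surfaces with boundary, with $\hat{c}$ components and $\hat{h}$ holes, and one checks $v_\partial=E_\partial-\sum_x(d_x-1)$ (boundary vertices of $\hat U$ equal boundary edges) and $c-h=\hat{c}-\hat{h}-\sum_x(d_x-1)$ (Euler characteristic under splitting). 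Substituting, the needed inequality becomes $\frac{E_\partial}{2}\ge \hat{c}-\hat{h}$, which holds because $\partial\hat{U}$ consists of exactly $\hat{c}+\hat{h}$ vertex-disjoint even cycles of length $\ge 2$, so $E_\partial\ge 2(\hat{c}+\hat{h})$. With that paragraph added (and the vertex-disjointness claim corrected), your argument closes and gives a purely combinatorial alternative to the paper's harmonic-function proof.
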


\begin{proof} 
The first statement follows  immediately from the definition of a quantum identity connection.

For the existence, (temporarily) subdivide every face of length $>4$ into quadrilaterals, by adding edges across faces (and no new vertices),   so that any face of length $l$ will be subdivided into $\frac{l}2-1$ quadrilaterals.  Recall that  $G$ is assumed $2$-connected; it remains $2$-connected after adding edges.   The subdivided graph has all faces of  length $2$ or $4$.  We now show that when all faces have length $2$ or $4$, the cilia can be chosen so that each quad face has exactly one inwardly-pointing cilium, and bigon faces contain no cilia. Then removing the added edges without changing the cilia locations will complete the proof. 

Assign to each edge $e$ of $G$ a generically chosen conductance $c_e\in(0,\infty)$. Let $v_0,v_1$ be two vertices on the outer face of $G$, and find a function $f:V\to\R$   satisfying $f(v_0)=0, f(v_1)=1$ with $f$ harmonic on all other vertices, that is, $f(v)$ is the weighted average of its neighboring values,   weighted by the conductances. By genericity (and $2$-connectivity) all $f$ values are distinct. 

Around the vertices of each face of $G$ we claim that there is exactly one local maximum of $f$ and one local minimum (including the external face).   Suppose not: suppose on a face with vertices $a,b,c,d$ in cyclic order we have $f(a)>f(b)<f(c)>f(d)<f(a)$.     Then we can find, by the mean value property for $f$,  paths from $a$ and $c$ to $v_1$ on which $f$ is increasing,   and likewise paths from $b$ and $d$ to $v_0$  on which $f$ is decreasing. By planarity these paths must cross,   violating the fact that the values on the increasing paths are all larger than the values on the decreasing paths. 

Therefore $f$ defines a `left' and `right' side of each face: when facing from the minimum to the maximum, the left side is the  interval on the left between the min and max.  We now put a cilium in each face for the interior (i.e. $\neq$ min/max) white vertices on the left side (if any) and interior (i.e. $\neq$ min/max) black vertices on the right side (if any).  This assignment of cilia to interior vertices is well-defined (independent of which face containing the vertex is referenced).  Indeed, orienting edges from lower to higher value of $f$, it follows by the harmonic property that the outgoing (resp. ingoing) arrows from a vertex are consecutive in cyclic order (in other words, no vertex is a `saddle point').

In this way, each quad face gets exactly one cilia, since there is either one white on the left or one black on the right (but not both).    Put cilia for any unassigned vertices of the external face, including $v_0,v_1$, to point into the external face.
\end{proof}

\begin{rema}\label{3conn} 
If $G$ is $3$-connected, possibly containing bigons, we can argue more simply as follows, beginning after the second paragraph of the previous proof.   By a theorem of Tutte \cite{Tutte}, after isotopy all faces are convex (where bigon faces degenerate to segments), all edges are straight lines, and no edges are perfectly horizontal.  Then choose the cilia to point left at white vertices and right at black vertices, say.  By these assumptions, each quadrilateral face will have one cilium inside.  
\end{rema}

\begin{defi}
A Laurent polynomial in $q$ is \emph{symmetric} if it is invariant under $q\mapsto q^{-1}$.  A polynomial in $q$ is \emph{palindromic} if there exists a nonnegative integer or half-integer $\alpha$ such that multiplying this polynomial by $q^{-\alpha}$ yields a symmetric Laurent polynomial in $q$.  
\end{defi}

\begin{theo}\label{theo:symmetricZq}
For any ciliation $L$ and multiweb $m\in\Omega_n$, the quantum trace $\trq(m)=\trqc(I_q,m)$ is a nonzero palindromic   polynomial in $q$ with integer coefficients (all with the same sign), and with nonnegative coefficients when $L=L^+$ is positive.   Moreover, multiplying $\trq(m)$ by $q^{-N\binom{n}{2}}$ yields a symmetric Laurent polynomial.    It follows that $\Zq=\Zqc(I_q)$ is a symmetric Laurent polynomial in $q$ with integer coefficients,   and for positive ciliations $L=L^+$ the quantum partition function $\Zq=\Zq^+$ is a nonzero symmetric Laurent polynomial in $q$ with nonnegative coefficients. 
\end{theo}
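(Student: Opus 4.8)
The plan is to work entirely with the explicit diagonal formula and reduce everything to two combinatorial facts about edge colorings. By Proposition~\ref{theo:quantumidentityconn} and Proposition~\ref{prop:tracefordiagconn}, I would write
\[
\trq(m)=\trqc(I_q,m)=\Bigl(\prod_{e\in E}q^{\binom{m_e}{2}}\Bigr)\sum_{c}\ \prod_{v\in V}(-q)^{\ell(\sigma_v)}\ \prod_{e\in E}\ \prod_{i\in S_e}(I_q(e))_{ii},
\]
where (using gauge invariance, Proposition~\ref{coro:diagconjugation}) $I_q$ is realized as a diagonal $\SL_n$-connection whose edge matrices are powers $Q^{\alpha_e}$ of the quantum identity matrix $Q$. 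Since each $(I_q(e))_{ii}$ is a single power of $q$, every edge coloring $c$ contributes exactly one monomial $\pm q^{\mathrm{exp}(c)}$, and the whole theorem becomes the study of the multiset of exponents $\{\mathrm{exp}(c)\}_c$ together with their common sign.

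For the integrality and sign statements I would invoke Proposition~\ref{prop:positivecilia}: at $q=1$ the trace equals $\epsilon$ times the number of edge colorings, which forces the sign $\prod_{v}(-1)^{\ell(\sigma_v)}$ to be a constant $\epsilon\in\{\pm1\}$ independent of $c$, and guarantees at least one coloring exists. Hence $\trq(m)$ is a nonzero integer combination of monomials all carrying the sign $\epsilon$, giving integer coefficients of one sign; when $L=L^+$ one has $\epsilon=+1$, yielding nonnegative coefficients.

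The heart of the argument is the symmetry, which I would obtain from the color-reversing involution $c\mapsto\bar c$ defined by $\bar S_e=\{n+1-i:i\in S_e\}$ (a valid, self-inverse edge coloring, since $i\mapsto n+1-i$ permutes $\{1,\dots,n\}$). This has two effects. First, because $Q_{n+1-i,\,n+1-i}=(Q_{ii})^{-1}$, the factor $\prod_{i\in S_e}(I_q(e))_{ii}$ is replaced by its image under $q\mapsto q^{-1}$, so the connection exponents of $c$ and $\bar c$ are opposite. Second, listing colors block-by-block at a vertex $v$, inversions occur only between blocks, and a cross-block pair $(a,a')$ (with $a$ earlier, $a'$ later) contributes to $\ell(\sigma_v)$ when $a>a'$ and to $\ell(\bar\sigma_v)$ when $a<a'$; pairing these yields the key identity $\ell(\sigma_v)+\ell(\bar\sigma_v)=\sum_{j<j'}m_{e_j}m_{e_{j'}}=\binom{n}{2}-\sum_{e\ni v}\binom{m_e}{2}$. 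Summing over the $2N$ vertices (each edge incident to two of them) gives $\sum_v[\ell(\sigma_v)+\ell(\bar\sigma_v)]=2N\binom{n}{2}-2\sum_e\binom{m_e}{2}$, and since the prefactor contributes $2\sum_e\binom{m_e}{2}$ the two $\sum_e\binom{m_e}{2}$ terms cancel, leaving $\mathrm{exp}(c)+\mathrm{exp}(\bar c)=2N\binom{n}{2}$. Thus the exponent multiset is symmetric about $N\binom{n}{2}$, i.e. $q^{-N\binom{n}{2}}\trq(m)$ is invariant under $q\mapsto q^{-1}$, with the same shift $\alpha=N\binom{n}{2}$ for every $m$.

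To upgrade ``symmetric Laurent'' to ``palindromic polynomial'' I must show every exponent is nonnegative, and here I would reduce to a \emph{trivial} ciliation $L_0$, which exists by Proposition~\ref{id}: for $L_0$ one has $I_q=I$, the connection factors are all $1$, and each exponent is simply $\sum_e\binom{m_e}{2}+\sum_v\ell(\sigma_v)\geq0$. Because changing the ciliation alters $\trq(m)$ only by an overall sign (Proposition~\ref{prop:ciliadependence}), the set of exponents is ciliation-independent, so nonnegativity at $L_0$ gives it for all $L$; combined with the symmetry the exponents then lie in $[0,2N\binom{n}{2}]$. Finally, since each $\trq(m)$ is a symmetric-about-$N\binom{n}{2}$ polynomial with integer coefficients, summing gives that $\Zq=\Zqc(I_q)=q^{-N\binom{n}{2}}\sum_{m}\trq(m)$ is a symmetric Laurent polynomial with integer coefficients; for $L^+$ every $\trq(m)$ is nonzero with nonnegative coefficients of the same sign, so no cancellation occurs and $\Zq^+$ is a nonzero symmetric Laurent polynomial with nonnegative coefficients. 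I expect the main obstacle to be the cross-block inversion identity $\ell(\sigma_v)+\ell(\bar\sigma_v)=\binom{n}{2}-\sum_{e\ni v}\binom{m_e}{2}$ together with checking that the connection exponents and the $\binom{m_e}{2}$-prefactor cancel \emph{exactly} under the involution; this bookkeeping is precisely where the value $N\binom{n}{2}$ of the shift is pinned down.
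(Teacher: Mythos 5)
Your proof is correct, and its engine --- the color-reversing involution $c\mapsto\bar c$, $\bar S_e=\{n+1-i:i\in S_e\}$ --- is the same mechanism the paper uses; but the decomposition around it is genuinely different, so a comparison is worthwhile. The paper performs all reductions \emph{first}: it picks a trivial ciliation (Proposition \ref{id}), so that $I_q=I$ and, for $m$ proper, $\trq(m)=\sum_c\prod_v(-q)^{\ell(\sigma_v)}$ as in (\ref{eq:trivialciliaformula}); color reversal then simply replaces $\ell(\sigma_v)$ by $\binom{n}{2}-\ell(\sigma_v)$ (the standard longest-element complement, available because every block is a singleton when $m$ is proper), giving $\trq(m)=q^{2N\binom{n}{2}}\mathbf{tr}_{q^{-1}}(m)$ in one line; non-proper $m$ are handled by dividing by the symmetric Laurent polynomial $\prod_e[m_e]!$, and general resp.\ positive ciliations by Remark \ref{rema:moreremarksregardingquantumidentityconnection} (\ref{item2:rema:qtraceispolyexpressioninqconnection}) resp.\ Remark \ref{rema:consequencesofq=1positivecilia} (\ref{item3:rema:consequencesofq=1positivecilia}). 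You instead run the involution at an arbitrary ciliation with $I_q$ realized by powers of $Q$, which obliges you to prove the generalized cross-block identity $\ell(\sigma_v)+\ell(\bar\sigma_v)=\binom{n}{2}-\sum_{e\ni v}\binom{m_e}{2}$ (your argument for it is correct: inversions occur only across blocks, and each cross-block pair is an inversion of exactly one of $\sigma_v,\bar\sigma_v$) and to verify that the connection exponents negate while the $\binom{m_e}{2}$-prefactor cancels against the vertex sums; the trivial ciliation enters only at the very end, to certify nonnegativity of exponents. What your route buys: the shift $N\binom{n}{2}$ and its independence of $m$ emerge from explicit exponent bookkeeping, and you bypass the split graph $G_m$ and the division by $[m_e]!$ entirely, since (\ref{eq:formulaoftracefordiagonalconnection}) already absorbs the edge multiplicities. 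What the paper's route buys: after its reductions only the standard complement identity is needed, with essentially no bookkeeping. Both arguments rest on the same external inputs --- sign coherence from Proposition \ref{prop:positivecilia} for integrality, nonvanishing and positivity, and ciliation-independence of $\trq(m)$ up to sign (you via Propositions \ref{prop:ciliadependence} and \ref{theo:quantumidentityconn}, the paper via Remark \ref{rema:moreremarksregardingquantumidentityconnection} (\ref{item2:rema:qtraceispolyexpressioninqconnection})) --- so the two proofs are equally self-contained, with yours the more uniform computation and the paper's the leaner one.
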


\begin{proof}  
First choose $L$ to be a trivial ciliation, which is possible by Proposition \ref{id},    so that $I_q$ may be taken to be $I$.  If $m\in\Omega_n$ is a proper multiweb, then, by (\ref{eq:formulaoftracefordiagonalconnection}),
\be\label{eq:trivialciliaformula}
\trq(m) =\trqc(I_q,m) =  \sum_c\prod_v (-q)^{\ell(\sigma_v)}
\ee 
where the sum is over edge colorings $c$. Reversing the order of the colors (thereby reindexing the sum) replaces $\ell(\sigma_v)$ with $\binom{n}{2}-\ell(\sigma_v)$ at each vertex $v$. Thus 
$$
\trq(m)=\sum_c \prod_v (-q)^{\binom{n}{2}-\ell(\sigma_v)} = q^{2N\binom{n}{2}}\mathbf{tr}_{q^{-1}}(m).
$$ 
This shows that  
$$
q^{-N\binom{n}{2}}\trq(m) = q^{N\binom{n}{2}}\mathbf{tr}_{q^{-1}}(m)
$$ 
is symmetric. If $m$ is not proper, divide both sides by the symmetric Laurent polynomial $\prod_e [m_e]!$.  The statement for general ciliations $L$ follows from the trivial case by Remark \ref{rema:moreremarksregardingquantumidentityconnection} (\ref{item2:rema:qtraceispolyexpressioninqconnection}), and for positive ciliations $L=L^+$ by Remark \ref{rema:consequencesofq=1positivecilia} (\ref{item3:rema:consequencesofq=1positivecilia}).
\end{proof}

\begin{theo}\label{theo:dependenceonplanarembedding}
The quantum traces $\trq(m)$ for positive cilia $L^+$ are independent of the particular planar embedding of $G$.  In other words, they depend only on the combinatorial structure of $G$ as captured by the multiwebs $m\in\Omega_n$.  It follows that the quantum $n$-dimer partition function $\Zq^+$ is assigned to any abstract planar (unciliated) bipartite graph $G$.  
\end{theo}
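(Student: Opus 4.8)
The plan is to exploit that $\trq(m)$ is, by its very definition, an isotopy invariant of the framed ciliated split web $\mathbf{W}_m\subset\R^3$ (Definition \ref{defi:firstquantumtraceofmultiweb} together with Theorem \ref{theo:qtraceofweb}), combined with the fact—already established in Remark \ref{rema:consequencesofq=1positivecilia}—that for positive cilia $\trq(m)$ is independent of which positive ciliation $L^+$ is chosen. Thus it suffices to fix the abstract multiweb $m$ and show that any two planar embeddings $\iota_1,\iota_2$ of $G$, each equipped with the blackboard framing and some positive ciliation, yield split webs $\mathbf{W}_m^{(1)},\mathbf{W}_m^{(2)}$ with equal quantum trace. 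First I would invoke Whitney's theorem: since $G$ is assumed $2$-connected (Remark \ref{rema:sikoratraceofmultiweb}), two planar embeddings differ by a finite sequence of orientation-preserving homeomorphisms of $S^2$ (including change of the outer face) together with Whitney $2$-flips, each reflecting the subgraph $H$ lying between a $2$-separation $\{u,v\}$.

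The homeomorphism moves are harmless. An orientation-preserving homeomorphism of $S^2=\R^2\cup\{\infty\}$ is isotopic to the identity and extends to an ambient isotopy of $S^3=\R^3\cup\{\infty\}$ carrying $\mathbf{W}_m^{(1)}$ to an isotopic web still equipped with the blackboard framing, so it leaves $\trq(m)$ unchanged. The entire content therefore lies in the $2$-flips. A $2$-flip at $\{u,v\}$ can be realized spatially by rotating a ball $B$ containing $H$ through angle $\pi$ about the line through $u$ and $v$ in the plane $\R^2\times\{0\}$. This is an ambient isotopy of $\R^3$, hence preserves $\trq(m)$; however, the rotated copy of $H$ now carries the downward framing $(0,0,-1)$ rather than the blackboard framing of $\iota_2$, so the web it produces differs from $\mathbf{W}_m^{(2)}$ precisely by a reversal of the framing along the edges and vertices of $H$.

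The heart of the argument, and the step I expect to be the main obstacle, is reconciling this framing reversal. Along the crossing-free ribbon edges of $H$ the reversal $(0,0,1)\mapsto(0,0,-1)$ can be traded, via the framing relation (Figure \ref{fig:framing}), for controlled powers of $q$, while at each vertex of $H$ it reverses the CCW/CW cyclic ordering of incident half-edges, which the source--sink relation (Figure \ref{fig:determinant}) converts into signs and $q$-powers recorded by the vertex permutations $\sigma_v$. The plan is to show that this net contribution is exactly undone by passing to an appropriate positive ciliation for $\iota_2$: since $\trq(m)$ changes only by $(-1)^{m_e}$ when a cilium is rotated past an edge $e$ (Proposition \ref{prop:ciliadependence}) and is genuinely independent of the choice among positive ciliations, the parity constraint built into positivity (an even number of inward cilia per face) forces all accumulated framing and sign factors to cancel. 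A clean way to package the bookkeeping is to first reduce to a trivial ciliation via Proposition \ref{id}, so that $I_q=I$ and, by Proposition \ref{prop:tracefordiagconn}, $\trq(m)=\left(\prod_{e\in E}q^{\binom{m_e}{2}}\right)\sum_c\prod_{v\in V}(-q)^{\ell(\sigma_v)}$; here the set of edge colorings $c$ is manifestly combinatorial, so the only embedding dependence sits in the vertex permutations $\sigma_v$, and the flip affects these in a manner compatible with the simultaneous change of trivial ciliation dictated by the new face structure.

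Finally, once $\trq(m)$ is shown to depend only on the abstract multiweb $m$, it follows immediately from the definition $\Zq^+=q^{-N\binom{n}{2}}\sum_{m\in\Omega_n}\trq(m)$ that $\Zq^+$ is an invariant of the abstract planar bipartite graph $G$, since $N$ and $\Omega_n$ are themselves combinatorial. This yields the desired assignment of $\Zq^+$ to any abstract (unciliated) planar bipartite graph.
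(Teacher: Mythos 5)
Your high-level skeleton coincides with the paper's: both arguments reduce, via Whitney's theorem for $2$-connected graphs, to checking invariance under a single $2$-flip, and both try to realize the flip inside $\R^3$. But your proposal stops being a proof exactly at the step you yourself flag as "the main obstacle." After the rigid $\pi$-rotation of the ball containing $H$, you are left with a web carrying the framing $(0,0,-1)$, and you assert that the resulting discrepancy is "exactly undone by passing to an appropriate positive ciliation for $\iota_2$." No argument is given, and in fact this mechanism cannot work as stated: by Proposition \ref{prop:ciliadependence}, changing the ciliation alters $\trq(m)$ only by a sign (a factor $(-1)^{m_e}$ per edge a cilium crosses), so it can never absorb a nontrivial power of $q$; yet the framing mismatch produces genuine $q$-powers (each edge/vertex correction costs factors like $q^{\pm(1+\frac{1}{n})}$), which must therefore cancel among themselves. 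Establishing that cancellation is precisely the content your sketch lacks, and the paper explicitly warns against your route ("one is tempted to simply rotate\dots Unfortunately, the resulting ribbon structure is different") and states that a purely combinatorial bookkeeping argument---essentially what you propose at the end via trivial ciliations, which are moreover generally \emph{not} positive, since a quadrilateral face of a trivially ciliated graph has exactly one inward cilium---"escaped the authors." Note also that the rotation reverses the orientation of every vertex disk of $H$, so black vertices acquire the white (CW) convention; neither the framing relation nor the source--sink relation alone handles this.

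The paper's actual resolution is different and supplies the missing cancellation. It never performs the rigid rotation: instead it drags a single vertex $v$ of $\mathrm{E}$ over the component by a diagrammatic isotopy fixing $\mathrm{C}$ and the axis, arriving at $\mathrm{C}\cup\Er$ up to vertex kinks, each removable by a skein relation at cost $\pm q^{\pm(1+\frac{1}{n})}$; hence the two embeddings' traces differ by some overall factor $q^{\beta}$. The key trick is then to undo the flip by dragging the vertex \emph{under} every strand it previously went over, which multiplies by the same factor and returns the original web, giving $\trq(\mathbf{W}_m)=q^{2\beta}\trq(\mathbf{W}_m)$ and hence $\beta=0$ since $\trq(\mathbf{W}_m)\neq 0$ (it is positive at $q=1$). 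Separately, your reduction via "orientation-preserving homeomorphisms plus $2$-flips" omits the global reflection: for a $3$-connected graph there are no $2$-flips at all, yet the reflected embedding is genuinely distinct, and the paper must (and does) handle it by the color-reversal argument from the proof of Theorem \ref{theo:symmetricZq}. Without importing a cancellation argument of the paper's doubling type (or honestly carrying out the framing/vertex-orientation bookkeeping), and without treating the reflection case, your proof has a genuine gap at its central step.
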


\begin{proof}
Note the result is immediate when $n=2$ and $q=q$ by (\ref{eq:n=2partitionfunction}) below, and also when $n=n$ and $q=1$ by Proposition \ref{prop:positivecilia}.      Going forward, it suffices to assume $m$ is proper.

To start, note if $G$ is $3$-connected, then there are two embeddings:  $G$ and its reflection \cite{Whitney}.    That $\trq(m)$ is invariant under full reflection of $G$ (for positive cilia) is equivalent to the proof of Theorem \ref{theo:symmetricZq},   since reflecting is equivalent to reversing the order of the colors.  

Otherwise, assume $G$ has two vertices whose removal disconnects $G$ into (at least) two embedded components $\mathrm{C}, \mathrm{E}$,   so that the task is to show $\trq(m)$ is invariant upon reflecting the component $\mathrm{E}$ across an axis to give a newly embedded component $\Er$   (while keeping the embedding of $\mathrm{C}$ unchanged).  Such operations connect the space of embeddings (\cite{Whitney}).

While a purely combinatorial argument, akin to that for the $3$-connected case, escaped the authors, the following quantum topological argument is a natural substitute.  (As usual, cilia/sign considerations do not affect the argument, so will be systematically ignored.)  

Since $G$ is actually embedded in $\R^2\subset\R^3$, one is tempted to think that the isotopy invariance of $\trq(m)=\trq(\mathbf{W}_m)$ (where $\mathbf{W}_m$ is the corresponding $n$-web in $\R^3$) might allow one to simply rotate the $\mathrm{E}$ component in $\mathrm{C}\cup \mathrm{E}$ in $\R^3$ by $180$ degrees about the reflection axis to achieve $\mathrm{C}\cup\Er$.  Unfortunately, the resulting ribbon structure is different.  Nevertheless, quantum topology does allow for this `rotation' to be implemented, preserving the ribbon structure, without ever leaving the plane.  (That is, purely diagrammatically.)  

Take a vertex $v$ of $\mathrm{E}$, not part of the reflection axis, and drag it over the component $\mathrm{E}$ by isotopy,   leaving the axis as well as $\mathrm{C}$ fixed throughout, so that the resulting image is $\mathrm{C}\cup\Er$ except for some number of `kinks'   at vertices that are created during the process.  There is a skein relation, easily derivable from the Reshetikhin--Turaev construction of $\trq(\mathbf{W}_m)$ presented   in Section \ref{sssec:proofofLaurentproperty}, saying that these vertex kinks can be removed at the cost of multiplying by a power $q^\alpha$ (with a sign)   where $\alpha$ is $-1-\frac{1}{n}$ (resp. $1+\frac{1}{n}$) for positive/right handed (resp. negative/left handed) kinks   (regardless of whether the vertex is black or white).  See Figure \ref{fig:positivevertexkink}.  

By the isotopy invariance of the trace combined with the kink removing skein relation, the result of this `$180$ degree rotation over the axis' is thus multiplication by $q^\beta$ for some $\beta$.  One could justify, by a standard index argument, that the same number of positive and negative kinks are created during the isotopy, hence $\beta=0$.  Here is another argument.  

Just as above, by isotopy and kink removals bring $\mathrm{C}\cup\Er$ back to $\mathrm{C}\cup \mathrm{E}$, except this time implementing a `$180$ degree rotation under the axis' (that is, dragging the vertex $v$ under strands whenever it went over during the first isotopy, and vice versa).  This multiplies the trace by another factor of $q^\beta$, so that in total $\trq(\mathbf{W}_m)=q^{2\beta}\trq(\mathbf{W}_m)$.  Since $\trq(\mathbf{W}_m)=\trq(m)$ is not identically zero (it is positive when $q=1$), it follows that $\beta$ must be zero.   
\end{proof}

\begin{figure}[t]
\centering
\includegraphics[scale=1.25]{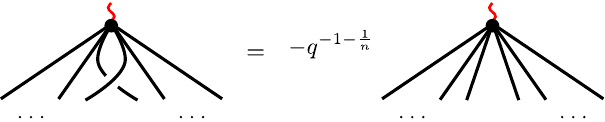}
\caption{Removing a positive vertex kink from an $n$-web $\mathbf{W}$.}
\label{fig:positivevertexkink}
\end{figure}

\section{Quantum Kasteleyn connection}\label{sec:quantumkasteleynconnection}

\subsection{\texorpdfstring{$q$}{a}-Kasteleyn matrix}

\begin{defi}
Let $G$ be a ciliated planar graph, as previously.  A \emph{choice of Kasteleyn signs} with respect to the ciliation $L$ is a function $\epsilon:E\to\{\pm1\}$ satisfying the property that, for every face of $G$ with $l$ sides and $k$ inward-pointing cilia, the product of the Kasteleyn signs $\epsilon(e)$ around the edges $e$ of the face equals $(-1)^{\frac{l}{2}-1}$ for $n$ odd and $(-1)^{\frac{l}{2}-1-k}$ for $n$ even.  

For a quantum connection $\Phi_q$ and a choice of Kasteleyn signs $\epsilon$, the \emph{$q$-Kasteleyn matrix with respect to $\Phi_q$}, denoted $K(\Phi_q)\in \Mat_N(\Mat_n(\mathbb{A}))$, is defined by putting $K(\Phi_q)_{ij}$ equal to the sum over the edges $e$ from the $j$-th black vertex $b_j$ to the $i$-th white vertex $w_i$ of the matrices $\epsilon(e)\Phi_q(e)\in \Mat_n(\mathbb{A})$.  (Note here there is only a true sum for some $i$ and $j$ if $G$ is not simple.)  It is emphasized once again that the Kasteleyn signs, hence $K(\Phi_q)$ as well, are defined relative to the ciliation $L$.  The $q$-Kasteleyn matrix can be considered as an $Nn\times Nn$ matrix $\tilde{K}(\Phi_q)=(\tilde{K}(\Phi_q)_{\tilde{i}\tilde{j}})$ over $\mathbb{A}$ in the obvious way by replacing each matrix $K(\Phi_q)_{ij}$ with its corresponding $n\times n$ block.  

When $q=1$, a quantum connection $\Phi_q=\Phi_1$ consisting of quantum matrices $\Phi_1(e)$ of the kind described in Example \ref{exam:quantummatrixexamples} (\ref{item2:exam:quantummatrixexamples}) is called a \emph{classical connection}.  Of course, the usual determinant $\Det(\tilde{K}(\Phi_1))$ of the $Nn\times Nn$ matrix $\tilde{K}(\Phi_1)$ over commuting variables makes sense for a classical connection.  
\end{defi}

\begin{theo}[\cite{DKS, KenyonOvenhouse}]\label{theo:classicalkasteleyntheorem}
For any classical connection $\Phi_1$ one has $\pm \Det(\tilde{K}(\Phi_1))=Z_1(\Phi_1)$.  
\qed\end{theo}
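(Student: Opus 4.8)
The plan is to specialize the combinatorial trace formula to $q=1$ and match it, term by term, against the Leibniz expansion of the ordinary determinant, with the Kasteleyn sign condition doing the work of reconciling signs. Since $q=1$ the normalization $q^{-N\binom{n}{2}}=1$, so $Z_1(\Phi_1)=\sum_{m\in\Omega_n}\mathrm{tr}_1(\Phi_1,m)$. By Proposition \ref{prop:comparisonofqtraces} together with equation (\ref{alttrace}) evaluated at $q=1$ — where $q^{\binom{m_e}{2}}=1$, the factor $(-q)^{\ell(\sigma_v)}$ becomes the signature $\mathrm{sgn}(\sigma_v)=(-1)^{\ell(\sigma_v)}$, and each quantum minor $\det_{q,S_e,T_e}$ degenerates to the classical minor $\Det_{S_e,T_e}$ — one has
\[
\mathrm{tr}_1(\Phi_1,m)=\sum_c\Big(\prod_{v\in V}\mathrm{sgn}(\sigma_v)\Big)\prod_{e\in E}\Det_{S_e,T_e}(\Phi_1(e)),
\]
the sum running over half-edge colorings $c=\{(S_e,T_e)\}$ of $m$. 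Thus it suffices to identify $\Det(\tilde{K}(\Phi_1))$, up to a global sign, with the double sum of these terms over all $(m,c)$.

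First I would expand $\Det(\tilde{K})=\sum_{\tilde\sigma\in\mathfrak{S}_{Nn}}\mathrm{sgn}(\tilde\sigma)\prod_{\tilde i}\tilde{K}_{\tilde i\,\tilde\sigma(\tilde i)}$, indexing the $Nn$ rows by pairs (white vertex $w_i$, color $a$) and columns by (black vertex $b_j$, color $b$), with $\tilde{K}_{(i,a)(j,b)}=\sum_{e\colon b_j\to w_i}\epsilon(e)\Phi_1(e)_{ab}$. After also expanding the edge-sums inside each block entry, a nonzero term records, for each row slot, a choice of incident edge and a target column slot. I would then observe that this data is precisely equivalent to the triple: (i) a multiweb $m\in\Omega_n$, where $m_e$ is the number of slots routed across $e$; (ii) a half-edge coloring $c$, where $S_e$ and $T_e$ are the colors appearing at the white and black ends of $e$; and (iii) for each edge a bijection $\pi_e\colon S_e\to T_e$ recording how $\tilde\sigma$ pairs colors across $e$. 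The defining multiweb balance $\sum_{e\ni v}m_e=n$ at each vertex is exactly the requirement that $\tilde\sigma$ restrict to a bijection on the $n$ slots at that vertex, so every $\tilde\sigma$ contributing a nonzero term determines a unique $m$, and conversely. This step simultaneously absorbs the non-simple case, since the edge-sum expansion inside $K_{ij}$ is built directly into clause (i).

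Next, for fixed $(m,c)$ I would sum over the bijections $\{\pi_e\}$. The essential structural input is a factorization of the permutation sign into a part depending only on the coloring and a product of per-edge bijection signs, which I would package as the master sign identity: there is a global $\eta\in\{\pm1\}$, independent of $m$ and $c$, such that for every $\tilde\sigma$ (equivalently every triple $(m,c,\{\pi_e\})$),
\[
\mathrm{sgn}(\tilde\sigma)\Big(\prod_{e\in E}\epsilon(e)^{m_e}\Big)=\eta\Big(\prod_{v\in V}\mathrm{sgn}(\sigma_v)\Big)\Big(\prod_{e\in E}\mathrm{sgn}_{S_e,T_e}(\pi_e)\Big),
\]
where $\mathrm{sgn}_{S_e,T_e}(\pi_e)$ is the signature of $\pi_e$ relative to the increasing orderings of $S_e$ and $T_e$. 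Granting this, summing over the $\pi_e$ collapses each edge factor into the classical minor $\Det_{S_e,T_e}(\Phi_1(e))$, and the leftover sign is exactly $\eta\prod_v\mathrm{sgn}(\sigma_v)$; summing then over $c$ reproduces $\eta\,\mathrm{tr}_1(\Phi_1,m)$ and over $m$ yields $\eta\,Z_1(\Phi_1)$, which is the claim.

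The main obstacle is the master sign identity, and within it the assertion that $\eta$ is a single global constant. That the per-edge factor assembles the minor is routine. The genuine content is that $\mathrm{sgn}(\tilde\sigma)\prod_e\epsilon(e)^{m_e}\prod_v\mathrm{sgn}(\sigma_v)$ is independent of the underlying $(m,c)$ — this is the higher-rank analogue of Kasteleyn's classical sign lemma. I would prove it by the usual comparison argument: any two configurations are connected by local modifications that rotate colors around a single face (their superposition being a union of cycles bounding faces), and I would compute the change in this combination across one face of length $l$ with $k$ inward cilia. The defining Kasteleyn condition — product of $\epsilon(e)$ around such a face equal to $(-1)^{\frac{l}{2}-1}$ for $n$ odd and $(-1)^{\frac{l}{2}-1-k}$ for $n$ even — is precisely calibrated to cancel the face-local contribution of $\prod_v\mathrm{sgn}(\sigma_v)$ and the permutation sign, so the combination is locally, hence globally, constant. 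Tracking these face-by-face sign contributions, especially the $n$-parity split in the cilia dependence, is the delicate part; the remainder is bookkeeping.
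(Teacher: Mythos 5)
Your overall architecture is sound, and it coincides with how the paper handles the quantum generalization (Theorem \ref{theo:quantumkasteleyndet}, proved in Section \ref{sssec:proofofqkastdet}): expand the determinant, identify nonzero terms with triples consisting of a multiweb $m$, a half-edge coloring $c$, and per-edge bijections $\{\pi_e\}$ --- this is exactly Remark \ref{obs:kast_matrix_and_n_dimer_covers}, i.e.\ dimer covers of the blow-up graph $\tilde{G}$ --- and then collapse the sum over $\{\pi_e\}$ into classical minors. Your specialization of (\ref{alttrace}) at $q=1$ and the bookkeeping for non-simple graphs are correct. Be aware, though, that the paper does not prove this statement at all: it quotes it from \cite{DKS} (positive ciliations) and \cite{KenyonOvenhouse} (general ciliations), and even its proof of the quantum version explicitly defers the sign analysis to \cite{DKS}. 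So the only genuinely new content in your proposal is your step 4, the ``master sign identity'' --- and that is precisely where there is a gap.

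Your argument for that identity rests on the claim that any two configurations are connected by single-face color rotations, justified parenthetically by ``their superposition being a union of cycles bounding faces.'' That justification fails: the superposition of two dimer covers of $\tilde{G}$ is a disjoint union of alternating cycles in $\tilde{G}$, and for $n\geq 2$ these cycles project to closed walks in $G$ that in general neither bound faces nor are even simple --- a projected walk can revisit a vertex through different color slots and can traverse an edge more than once. Consequently the classical Kasteleyn device of promoting the face condition to arbitrary superposition cycles by induction on enclosed faces does not transfer verbatim: a non-simple projected walk does not enclose a disk tiled by faces, and the vertex signs $\mathrm{sgn}(\sigma_v)$ at multiply-visited vertices do not decompose into independent face-local contributions. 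Alternatively, if you insist on connectivity under face moves rather than direct comparison along superposition cycles, you would need a connectivity theorem for dimer covers of the \emph{non-planar} graph $\tilde{G}$ under moves projected from faces of $G$, which you assert without proof. Your face-local calibration computation (that the exponent $(-1)^{\frac{l}{2}-1}$, resp.\ $(-1)^{\frac{l}{2}-1-k}$ for $n$ even, cancels the face contribution of the vertex and permutation signs) is the easy half; the reduction of an arbitrary pair of configurations to such face-local comparisons is the hard half, and it is exactly the content of \cite{DKS, KenyonOvenhouse} that your sketch assumes away.
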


\begin{rema}
\begin{enumerate}
\item
This was proved for positive ciliations $L^+$ in \cite{DKS}, and for general ciliations $L$, in addition to more general connections corresponding to the `mixed dimer' setting, in \cite{KenyonOvenhouse}.  
\item
Note for positive ciliations $L^+$ (Definition \ref{defi:positiveciliation}) that when $n$ is even the sign monodromy $(-1)^{\frac{l}{2}-1-k}=(-1)^{\frac{l}{2}-1}$ appearing in the Kasteleyn condition is the usual one and the same as for $n$ odd.  In particular, when evaluated with positive cilia the Kasteleyn determinant $\Det(\tilde{K}(I))$ for the identity connection $I$ equals, up to a global sign, the $n$-dimer partition function $Z(n)$ (Remark \ref{rema:classicalndimerpartitionfunction}).  (In fact, the global sign is always $+$ for $n$ even, while for $n$ odd it depends on the arbitrary orderings of the black and white vertices from $1,2,\dots,N$ as well as the choice of Kasteleyn signs.)
\item
Note that, despite the terminology, neither $\tilde{K}(\Phi_q)$ nor the matrix entries $K(\Phi_q)_{ij}$ are typically quantum matrices, in the technical sense used previously.  In particular, the quantum determinant of $\tilde{K}(\Phi_q)$ is not defined.  
\end{enumerate}
\end{rema}

\subsection{\texorpdfstring{$q$}{q}-Kasteleyn determinant}\label{ssec:qkastdet}

\begin{defi}
The \emph{blow up graph} $\tilde{G}$ is the (nonplanar) bipartite graph obtained by replacing each vertex $v$ with $n$ copies and each edge $e$ of $G$ with a   copy of the complete bipartite graph $K_{n,n}$.  There is a natural projection from the vertices and edges of $\tilde{G}$ to those of $G$.   Over each white vertex $w_i$ of $G$ there are $n$ vertices $\tilde w_{i,1},\dots,\tilde w_{i,n}$ of $\tilde G$. The ordering of the white vertices   $w_1, w_2, \dots, w_N$ of $G$ determines a total ordering of the white vertices of $\tilde G$, the lexicographic ordering   ($w_{i,l}<w_{i',l'}$ if either $i<i'$ or $i=i'$ and $l<l'$).    When convenient $\tilde{w}_1, \tilde{w}_2, \dots, \tilde{w}_{Nn}$ may also be written for this total order, so that $\tilde w_{i,l}=\tilde w_{N(i-1)+l}$. Similarly for the black vertices.

The quantum connection $\Phi_q$ on $G$ lifts, in a sense, to a \emph{quantum scalar connection} $\tilde{\Phi}_q$ on $\tilde{G}$ such that for an edge $e$ of $G$ between vertices $b_j$ and $w_i$ the element $\Phi_q(e)_{lk}\in\mathbb{A}$ is assigned by $\tilde{\Phi}_q$ to the edge $\tilde{e}$ of $\tilde{G}$ lying over $e$ and connecting the vertices $\tilde{b}_{j,k}$ and $\tilde{w}_{i,l}$ lying over $b_j$ and $w_i$.  Then, for instance, for $\tilde{i}=N(i-1)+l$ and $\tilde{j}=N(j-1)+k$ the $q$-Kasteleyn matrix entry $\tilde{K}(\Phi_q)_{\tilde{i}\tilde{j}}\in\mathbb{A}$ is the same as the sum over edges $\tilde{e}$ of $\tilde{G}$ connecting vertices $\tilde{b}_{j,k}$ and $\tilde{w}_{i,l}$ of the quantities $\epsilon(e)\tilde{\Phi}_q(\tilde{e})\in\mathbb{A}$ where $e$ is the projection in $G$ of $\tilde{e}$.

Let $\tilde{\Omega}_1$ denote the set of dimer covers of $\tilde{G}$.  To a dimer cover $\tilde{m}\in\tilde{\Omega}_1$ is associated the following data:  
\begin{enumerate}
\item  
The edges $\tilde{e}$ of $\tilde{G}$ appearing in $\tilde{m}$, which project to edges $e$ of $G$.  
\item 
A permutation $\tilde{\sigma}\in\mathfrak{S}_{Nn}$.  This is defined by sending $\tilde{i}$ to the unique $\tilde{j}$ such that there is an edge $\tilde{e}$ in $\tilde{m}$ between $w_{\tilde{i}}$ and $b_{\tilde{j}}$.  Denote this edge by $\tilde{e}_{\tilde{i},\tilde{\sigma}(\tilde{i})}$ and its projection by $e_{\tilde{i},\tilde{\sigma}(\tilde{i})}$.  
\item  
A multiweb $m\in\Omega_n$, determined by projecting to $G$ the edges $\tilde{e}$ of $\tilde{m}$.  The multiplicities $m_e$ are precisely the number of edges $\tilde{e}$ of $\tilde{m}$ lying over $e$.  
\item 
A half-edge coloring $c$ of $m$.  Here if $e$ connects vertices $b_j$ and $w_i$ and if $\tilde{b}_{j,1}, \dots, \tilde{b}_{j,n}$ and $\tilde{w}_{i,1}, \dots, \tilde{w}_{i,n}$ are the corresponding vertices of $\tilde{G}$ lying above them, then the subset $T_e$ (resp. $S_e$) of $\{1,2,\dots,n\}$ assigned to the half-edge of $e$ connected to $b_j$ (resp. $w_i$) consists of all the indices $k$ (resp. $l$) such that there is a lift $\tilde{e}$ in $\tilde{m}$ attaching to $\tilde{b}_{j,k}$ (resp. $\tilde{w}_{i,l}$).
\item 
The vertex permutation $\sigma_v\in \mathfrak{S}_n$ associated to each vertex $v\in V$ of $G$ by the half-edge coloring $c$ of the multiweb $m$ of $G$ (defined in Section \ref{alt}).
\item\label{enum:item:definitionofh}
The \emph{edge permutation} $\sigma_e\in \mathfrak{S}_{m_e}$ associated to each edge $e\in E$, from $b_j$ to $w_i$ say, defined as follows.    First, identify $T_e$ (resp. $S_e$) with $\{1,2,\dots,m_e\}$ by the unique monotonically increasing bijection $g_e$ (resp. $f_e$).    Second, let $h_e$ be the unique bijection from $S_e$ to $T_e$ such that $\tilde b_{j,k}$ and  $\tilde w_{i,l}$ are connected by an edge $\tilde{e}$ of $\tilde{m}$, where $k=h_e(l)$.    Lastly, define $\sigma_e=g_e \circ h_e\circ f_e^{-1}$. See Figure \ref{fig:edgePerm} for an example.
\end{enumerate}

Finally, define the \emph{$q$-Kasteleyn determinant with respect to $\Phi_q$}, denoted $\Kdetq(\Phi_q)\in\mathbb{A}$, by
\begin{gather*}
\Kdetq(\Phi_q)=\\
q^{-N\binom{n}{2}}\sum_{\tilde{m}\in\tilde{\Omega}_1}(-1)^{\tilde{\sigma}}\left(\prod_{v\in V}q^{\ell(\sigma_v)}\right)\left(\prod_{e\in E}q^{\binom{m_e}{2}}q^{\ell(\sigma_e)}\right)\left(\prod_{\tilde{i}=1}^{Nn}\epsilon(e_{\tilde{i}\tilde{\sigma}(\tilde{i})})\right)
\tilde{\Phi}_q(\tilde{e}_{1\tilde{\sigma}(1)})\tilde{\Phi}_q(\tilde{e}_{2\tilde{\sigma}(2)})\dots\tilde{\Phi}_q(\tilde{e}_{Nn\tilde{\sigma}(Nn)}).
\end{gather*}
Note here that the order matters for the, not necessarily commuting, quantum scalar connection terms $\tilde{\Phi}_q(\tilde{e}_{\tilde{i}\tilde{\sigma}(\tilde{i})})\in\mathbb{A}$.    Note again that the ciliation $L$ is required to define both the local vertex permutations $\sigma_v$ as well as   the Kasteleyn signs $\epsilon(e_{\tilde{i}\tilde{\sigma}(\tilde{i})})$.  (Note that the signature of $\tilde\sigma$, $(-1)^{\tilde{\sigma}}$,   satisfies $(-1)^{\tilde{\sigma}}=(-1)^{\ell(\tilde{\sigma})}$.)
\end{defi}

\begin{rema}\label{rema:Kdetasdeterminant}
When $G$ is a simple graph,   then the function sending a dimer cover $\tilde{m}\in\tilde{\Omega}_1$ of $\tilde{G}$ to $\tilde{\sigma}\in\mathfrak{S}_{Nn}$ is injective, and the above formula for the $q$-Kasteleyn determinant $\Kdetq(\Phi_q)$ simplifies to the following (more determinant-like) formula in terms of the $q$-Kasteleyn matrix $K=K(\Phi_q)$, or rather $\tilde{K}=\tilde{K}(\Phi_q)$, 
$$
\Kdetq(\Phi_q)=q^{-N\binom{n}{2}}\sum_{\tilde{\sigma}\in\mathfrak{S}_{Nn}}(-1)^{\tilde{\sigma}}\left(\prod_{v\in V}q^{\ell(\sigma_v)}\right)\left(\prod_{e\in E}q^{\binom{m_e}{2}}q^{\ell(\sigma_e)}\right)\tilde{K}_{1\tilde{\sigma}(1)}\tilde{K}_{2\tilde{\sigma}(2)}\dots\tilde{K}_{Nn\tilde{\sigma}(Nn)}.
$$
Note those $\tilde{\sigma}\in\mathfrak{S}_{Nn}$ not corresponding to dimer covers $\tilde{m}\in\tilde{\Omega}_1$ are automatically zero in this formula, by definition of $\tilde{K}\in \Mat_{Nn}(\mathbb{A})$.  

Note also that when $q=1$ and when $\Phi_q=\Phi_1$ is a classical connection, then the above formula reduces to the classical Kasteleyn determinant, $\mathrm{Kdet}_1(\Phi_1)=\Det(\tilde{K}(\Phi_1))$, which is valid for nonsimple graphs as well.  
\end{rema}

\begin{theo}\label{theo:quantumkasteleyndet}
For any quantum connection $\Phi_q$ one has $\pm \Kdetq(\Phi_q)=\Zqc(\Phi_q)$.
\end{theo}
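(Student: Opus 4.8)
The plan is to expand both sides into sums indexed by combinatorial data and match them term by term, the only genuine difficulty being the bookkeeping of signs. On the partition-function side, I would first invoke Proposition \ref{prop:comparisonofqtraces} to rewrite $\trqc(\Phi_q,m)=\trqc^\prime(\Phi_q,m)$, so that
$\Zqc(\Phi_q)=q^{-N\binom n2}\sum_{m\in\Omega_n}\big(\prod_e q^{\binom{m_e}2}\big)\sum_c\prod_v(-q)^{\ell(\sigma_v)}\prod_e\det_{q,S_e,T_e}(\Phi_q(e))$,
a sum over multiwebs $m$ and half-edge colorings $c$. On the Kasteleyn side, I would use the defining expansion of $\Kdetq(\Phi_q)$ as a sum over dimer covers $\tilde m\in\tilde\Omega_1$ of the blow-up graph $\tilde G$.

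The key structural observation is that the data attached to a dimer cover (items (1)--(6) in the definition of $\Kdetq$) set up a fibration of $\tilde\Omega_1$ over the set of pairs $(m,c)$: every $\tilde m$ determines a multiweb $m$, a half-edge coloring $c$, and, for each edge $e$ of $m$, an edge permutation $\sigma_e\in\mathfrak{S}_{m_e}$; conversely $(m,c)$ together with an arbitrary choice of $\sigma_e\in\mathfrak{S}_{m_e}$ for each edge recovers a unique $\tilde m$, so that $\tilde\Omega_1\cong\bigsqcup_{(m,c)}\prod_{e}\mathfrak{S}_{m_e}$. I would then fix $(m,c)$ and sum over its fiber. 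Since $\Phi_q$ is edge-commuting, the product $\tilde\Phi_q(\tilde e_{1\tilde\sigma(1)})\cdots\tilde\Phi_q(\tilde e_{Nn\tilde\sigma(Nn)})$ factors over edges, and for a single edge $e$ the lexicographic (white-vertex) ordering forces the $m_e$ factors into increasing order of the white index, giving $\prod_{l\in S_e}\Phi_q(e)_{l,h(l)}$ with $h$ the matching bijection. Summing this over $\sigma_e\in\mathfrak{S}_{m_e}$ weighted by $(-q)^{\ell(\sigma_e)}$ reproduces exactly the quantum minor $\det_{q,S_e,T_e}(\Phi_q(e))$; this identification is precisely Lemma \ref{lem:qdet_orders} applied to the reindexing by the monotone bijections $f,g$ used to define $\sigma_e$. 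The explicit powers $q^{\ell(\sigma_v)}$ and $q^{\binom{m_e}2}$ are constant along the fiber and match the corresponding factors of $\trqc^\prime$ directly, while the overall $q^{-N\binom n2}$ agrees on the nose.

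The main obstacle is the sign analysis: I must show that for every $\tilde m$, with associated data $(m,c,\{\sigma_v\},\{\sigma_e\})$, one has $(-1)^{\tilde\sigma}\prod_{\tilde i=1}^{Nn}\epsilon(e_{\tilde i\tilde\sigma(\tilde i)})=\eta\cdot\prod_{v}(-1)^{\ell(\sigma_v)}\prod_e(-1)^{\ell(\sigma_e)}$ for a global sign $\eta\in\{\pm1\}$ independent of $\tilde m$. Granting this, combining $(-1)^{\ell(\sigma_v)}$ with the explicit $q^{\ell(\sigma_v)}$ yields $(-q)^{\ell(\sigma_v)}$, and combining $(-1)^{\ell(\sigma_e)}$ with $q^{\ell(\sigma_e)}$ yields the $(-q)^{\ell(\sigma_e)}$ weights used to assemble the quantum minors, whence $\Kdetq(\Phi_q)=\eta\,\Zqc(\Phi_q)$ and the theorem follows.

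To prove the sign identity I would exploit that every sign appearing in it is manifestly independent of $q$, so it suffices to verify it at $q=1$. But at $q=1$ the quantum minors degenerate to ordinary minors and the identity $\Kdetq(\Phi_1)=\pm\Zqc(\Phi_1)$ specializes, by Remark \ref{rema:Kdetasdeterminant}, to $\Det(\tilde K(\Phi_1))=\pm Z_1(\Phi_1)$, which is the classical higher-rank Kasteleyn theorem (Theorem \ref{theo:classicalkasteleyntheorem}). Taking $\Phi_1$ to be a classical connection with algebraically independent matrix entries makes both sides polynomials in these variables, and since distinct $\tilde m$ produce distinct monomials (the matchings $h$ on each edge give the distinct monomials of a determinant), there is no cancellation and the classical theorem forces the per-$\tilde m$ sign identity with a uniform $\eta$. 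Alternatively, one can argue directly by factoring the permutation $\tilde\sigma\in\mathfrak{S}_{Nn}$ through the vertex reorderings $\sigma_v$ and the edge matchings $\sigma_e$ and checking that the discrepancy between the lexicographic vertex order on $\tilde G$ and the cilia-grouped order is corrected exactly by the face-monodromy condition defining the Kasteleyn signs $\epsilon$; this is the $q$-independent core of the classical computation. Either way, I expect the signs to be the only place where real care is required.
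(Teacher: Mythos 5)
Your proposal is correct and follows essentially the same route as the paper's proof: both rewrite $\Zqc(\Phi_q)$ via Proposition \ref{prop:comparisonofqtraces}, decompose the $q$-Kasteleyn expansion as a fibration of dimer covers of $\tilde{G}$ over pairs (multiweb, half-edge coloring) with edge-permutation fibers, resum each fiber into the quantum minors $\det_{q,S_e,T_e}(\Phi_q(e))$ using Lemma \ref{lem:qdet_orders}, and reduce the remaining sign bookkeeping to the classical $q=1$ case (Theorem \ref{theo:classicalkasteleyntheorem}, i.e.\ \cite{DKS}). Your no-cancellation forcing argument with algebraically independent matrix entries is merely a more explicit justification of the paper's appeal to the sign analysis of \cite{DKS}, not a different method.
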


\begin{proof}  
This is more or less immediate from the proof of the $q=1$ case (which can be used to keep track of signs), Proposition \ref{prop:comparisonofqtraces}, and a simple bookkeeping of powers of $q$.  Details are provided in Section \ref{sssec:proofofqkastdet}.
\end{proof}

\begin{rema}
\item  For any choice of quantum identity connection $\Phi_q=I_q$ one has $\pm \Kdetq(I_q)=\Zq$, while for any positive ciliation $L^+$ one has $\pm \Kdetq(I_q)= \Zq^+$.
\end{rema}

\begin{figure}[t]
\centering
\begin{tikzpicture}[scale=1.5]
\tikzset{snake it/.style={decorate, decoration=snake}}

    \draw (3.7,.5)--(-.6,.5)--(-1.9,-.6)--(2.4,-.6)--cycle;
    
    \node[draw=black,fill=white,shape=circle,label={[label distance=10pt]west:\tiny $w$}] (A) at (0,0) {};
    \node[draw=black,fill=black,shape=circle,label={[label distance=10pt]east:\tiny $b$}] (B) at (2,0) {};
    \draw[line width=.6] (A.-14)--(B.194);
    \draw[line width=.6] (A) -- (B);
    \draw[line width=.6] (A.14)--(B.166);
    

    \node[scale=.5] at (.45,.2) {$\{1,4,5\}$};
    \node[scale=.5] at (1.55,.2) {$\{2,3,5\}$};
    \node[scale=.5] at (-.25,.35) {$\{2\}$};
    \node[scale=.5] at (-.25,.-.35) {$\{3\}$};
    \node[scale=.5] at (2.25,.25) {$\{1,4\}$};

    \foreach \x in {1,...,5}
    {\tikzmath{\y=int(5-\x+1);}
        \node[draw=black,fill=white,shape=circle,scale=.5,label={[label distance=10pt]west:\tiny $w_\y$}] (A\x) at (0,.5*\x +.5) {};
        \node[draw=black,fill=black,shape=circle,scale=.5,label={[label distance=10pt]east:\tiny $b_\y$}] (B\x) at (2,.5*\x +.5) {};
    }

    \draw[] (A1)--(B4);
    \draw[] (A2)--(B1);
    \draw[] (A5)--(B3);

    \draw[dashed,line width=.6] (A)--(-1.1,-.6);
    \draw[dashed,line width=.6] (A)--(-.7,.4);
    \draw[dashed,line width=.6] (B.-5)--(3.4,.5);
    \draw[dashed,line width=.6] (B.5)--(3.4,.54);

    \draw[dashed] (A3)--(-1,1.5);
    \draw[dashed] (A4)--(-1,3);   
    \draw[dashed] (B2)--(3.5,2);
    \draw[dashed] (B5)--(3.5,3.5);
\end{tikzpicture}
\caption{An example of an edge permutation $\sigma_e=\sigma_{bw}\in\mathfrak{S}_{m_e}$ corresponding to the edge $e=bw$ ($G$ simple). Shown is a local picture of the graph $G$ and its blowup $\tilde{G}$. The permutation $\sigma_{bw}\in \mathfrak{S}_3$, corresponding to sending $w_1\mapsto b_3$, $w_4\mapsto b_5$, and $w_5\mapsto b_2$, is $231\in\mathfrak{S}_3$ (namely, $1\mapsto2, 2\mapsto3, 3\mapsto1$) in the natural order on $S_e=\{1,4,5\}$ and $T_e=\{2,3,5\}$. }
\label{fig:edgePerm}
\end{figure}
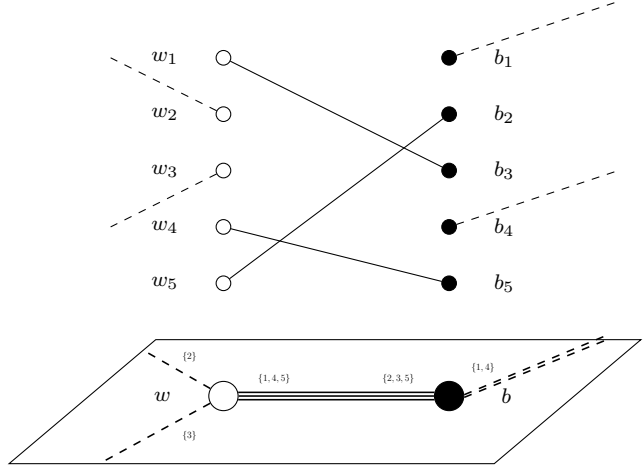

\subsubsection{Proof of Theorem \ref{theo:quantumkasteleyndet}}\label{sssec:proofofqkastdet}  

Recall the notion of an edge permutation $\sigma_e\in\mathfrak{S}_e$ from Definition \ref{ssec:qkastdet}.  See Figure \ref{fig:edgePerm}.

It is also possible to view, see below, a `local' edge permutation $\sigma_e \in \mathfrak{S}_{m_e}$, depending on a coloring $c$, as a `global' edge permutation $\tilde{\sigma}_e\in\mathfrak{S}_{Nn}$    (thought of as a permutation from whites to whites)   by choosing an indexing of  the white  vertices of $G$, which induces a (lexicographic) total order on all the white vertices of $\tilde{G}$.    (It is clear that as permutations in $\mathfrak{S}_{Nn},$ two edge permutations corresponding to different edges commute.)    However, when writing $\ell(\sigma_e)$, then $\sigma_e$ is always being thought of as a permutation of $m_e$ elements.  

The global edge permutations $\tilde{\sigma}_e=\tilde{\sigma}_{e,c}\in\mathfrak{S}_{Nn}$ subordinate to $c$ are essentially defined by the following property.    Also depending on the coloring $c$ of $m$, let $\tilde{\sigma}_{0,c} \in \mathfrak{S}_{Nn}$ be the permutation    (thought of as a permutation from whites to blacks)   such that all of its local edge permutations $\sigma_{0,e,c}\in\mathfrak{S}_{m_e}$ are the identity (compare Figure \ref{fig:edgePerm} where the identity edge permutation would match $w_1\mapsto b_2$, $w_4\mapsto b_3$, and $w_5\mapsto b_5$).  Here the black vertices are also totally ordered.  Then any other permutation $\tilde{\sigma}=\tilde{\sigma}_c\in\mathfrak{S}_{Nn}$ (from whites to blacks) corresponding to $c$ can be written as  
$$
\tilde{\sigma} =  \tilde{\sigma}_{0,c}\circ\left(\prod_e \tilde{\sigma}_e\right)\in\mathfrak{S}_{Nn}
$$
(so, white to black equals first white to white followed by white to black).

From the discussion above, one can write 
\begin{gather*}
\Kdetq(\Phi_q) =\\ q^{-N\binom{n}{2}}\sum_{m \in \Omega_n} \sum_{\tilde{m} \in m} (-1)^{\tilde{\sigma}}\left(\prod_v q^{\ell(\sigma_v)}\right)\left(\prod_e q^{\binom{m_e}{2}}q^{\ell(\sigma_e)}\right) \left(\prod_{\tilde{i}=1}^{Nn}\epsilon(e_{\tilde{i}\tilde{\sigma}(\tilde{i})})\right)
\tilde{\Phi}_q(\tilde{e}_{1\tilde{\sigma}(1)})\dots\tilde{\Phi}_q(\tilde{e}_{Nn\tilde{\sigma}(Nn)})
\end{gather*}
where $\tilde{m}\in m$ means that $\tilde{m}$ projects to $m$.  Note $\prod_{\tilde{i}=1}^{Nn}\epsilon(e_{\tilde{i}\tilde{\sigma}(\tilde{i})})=\prod_{e}\epsilon(e)^{m_e}$ in fact only depends on $m$, independent of $\tilde{m}\in m$.

The goal is to show that the interior sum is the quantum trace of $m$, up to sign, that is,  
\be\label{eqn:kDetThm}
s \trqc(\Phi_q,m) = q^{-N\binom{n}{2}}\left(\prod_{e}\epsilon(e)^{m_e}\right)\sum_{\tilde{m} \in m} (-1)^{\tilde{\sigma}}\left(\prod_v q^{\ell(\sigma_v)}\right)\left(\prod_e q^{\binom{m_e}{2}} q^{\ell(\sigma_e)}\right) \tilde{\Phi}_q(\tilde{e}_{1\tilde{\sigma}(1)})\dots\tilde{\Phi}_q(\tilde{e}_{Nn\tilde{\sigma}(Nn)})
\ee
for some constant sign $s$ independent of $m$. 

Now, for a fixed multiweb $m$, starting from the RHS of  (\ref{eqn:kDetThm}),
$$
= q^{-N\binom{n}{2}}\left(\prod_{e}\epsilon(e)^{m_e}\right)\sum_{c}\sum_{\tilde{m} \in c} (-1)^{\tilde{\sigma}}\left(\prod_v q^{\ell(\sigma_v)}\right)\left(\prod_e q^{\binom{m_e}{2}}q^{\ell(\sigma_e)}\right) \tilde{\Phi}_q(\tilde{e}_{1\tilde{\sigma}(1)})\dots\tilde{\Phi}_q(\tilde{e}_{Nn\tilde{\sigma}(Nn)})  \label{eqn:step1}
$$
where the sum over $c$ is the sum over half-edge colorings of $m$.
$$
= q^{-N\binom{n}{2}}\left(\prod_{e}\epsilon(e)^{m_e}\right)\sum_{c} \sum_{\tilde{\sigma} \in c} (-1)^{\tilde{\sigma}_{0,c}}\left(\prod_v q^{\ell(\sigma_v)}\right)\left(\prod_e (-1)^{\tilde{\sigma}_e}q^{\binom{m_e}{2}}q^{\ell(\sigma_e)}\right) \tilde{\Phi}_q(\tilde{e}_{1\tilde{\sigma}(1)})\dots\tilde{\Phi}_q(\tilde{e}_{Nn\tilde{\sigma}(Nn)}) \label{eqn:step2}
$$
recalling that signs of permutations are multiplicative.  Here also `$\tilde{\sigma}\in c$' has been inserted in place of `$\tilde{m}\in c$' as the map $\tilde{m}\mapsto\tilde{\sigma}$ becomes injective over fixed colored multiwebs $(m,c)$.  (It need not be injective over just multiwebs $m$.)  Note that $(-1)^{\tilde{\sigma}_e}=(-1)^{\sigma_e}$.  (This is because $\tilde{\sigma}_e$ is `conjugate' to $\sigma_e$ in a sense; note their lengths need not agree.)
$$
= q^{-N\binom{n}{2}}\left( \prod_{e}\epsilon(e)^{m_e} \right) \sum_{c} (-1)^{\tilde{\sigma}_{0,c}}\left(\prod_v q^{\ell(\sigma_v)}\right) \sum_{\tilde{\sigma} \in c} \left(\prod_e q^{\binom{m_e}{2}}(-q)^{\ell(\sigma_e)}\prod_{s\in S_e}\Phi_q(e)_{s,h_e(s)} \right)\label{eqn:step3}
$$
where $h_e$ from $S_e$ to $T_e$ is defined in (\ref{enum:item:definitionofh}) in Definition \ref{ssec:qkastdet}. Also, the powers of $q$ associated to the $\sigma_v$  have been factored out (since these depend only on $c$ and not $\tilde{\sigma}\in c$), and the commutativity of the $\Phi_q$ on different edges has been used.  Lastly, as it stands, the rightmost product over $S_e$ is not defined, due to the noncommutativity of variables over a single edge:  the convention is used that the terms are ordered from left to right according to the natural (increasing) order of $S_e$ (namely, the only convention such that the previous equation is valid).
$$
= q^{-N\binom{n}{2}}\left( \prod_{e}\epsilon(e)^{m_e} \right) \sum_{c} (-1)^{\tilde{\sigma}_{0,c}}\left(\prod_v q^{\ell(\sigma_v)}\right) \left(\prod_e q^{\binom{m_e}{2}} \sum_{\sigma_e\in c|_e} (-q)^{\ell(\sigma_e)}\prod_{s\in S_e}\Phi_q(e)_{s,h_e(s)}\right)   \label{eqn:step4}
$$
after exchanging the sum and the product (again using the commutativity of variables across different edges).
\be\label{eqn:step5}
= q^{-N\binom{n}{2}} \left( \prod_{e}\epsilon(e)^{m_e} \right) \sum_{c} (-1)^{\tilde{\sigma}_{0,c}}\left(\prod_v q^{\ell(\sigma_v)}\right) \left(\prod_e q^{\binom{m_e}{2}} \det_{q,S_e,T_e}(\Phi_q(e))\right) 
\ee
using the definition of the quantum minor.

By comparing (\ref{eqn:step5}) and (\ref{alttrace}), it is clear there is a correspondence between their terms respecting powers of $q$ (omitting the leading $q^{-N\binom{n}{2}}$ term), and thus all that remains is to compare their signs.  These signs are, however, identical to the classical ($q=1$) case, and the equivalence of signs is thus demonstrated in \cite{DKS,KenyonOvenhouse}; see Theorem \ref{theo:classicalkasteleyntheorem}.  This completes the proof.

\section{The circulation of a multiweb}\label{ssec:randomvariable} 

Let $G$ be planar, with positive ciliation $L=L^+$, so that $\Zq=\Zq^+$.  For $m\in\Omega_n$ put 
\be\label{eq:normalizedqtrace}
\htrq(m):=q^{-N\binom{n}{2}}\trq(m)
\ee 
which is a (nonzero) symmetric Laurent polynomial in $q$ with natural number coefficients, by Theorem \ref{theo:symmetricZq}.  Note 
\be\label{eq:firstderivativeiszero}
\left.\frac{d}{dq}\right|_{q=1}\htrq(m)=0
\ee 
by symmetry, and by definition 
$$
\Zq^+=\sum_{m\in\Omega_n}\htrq(m).
$$

Define a random variable (i.e. function) 
$$
X_n : \Omega_n \to \mathbb{Q}_{\geq0}
$$ 
by 
$$
X_n(m):=\left.\frac{d^2}{dq^2}\right|_{q=1}\log\htrq(m)=\frac{1}{\mathbf{tr}_1(m)}\left.\frac{d^2}{dq^2}\right|_{q=1}\htrq(m)
$$ 
where $\mathbf{tr}_1(m)$ is the classical trace (Definition \ref{defi:classicaltrace}).    Note $X_n$ is indeed valued in $\mathbb{Q}_{\geq0}$ by the positivity of $\htrq(m)$.    We call $X_n(m)$ the \emph{circulation} of the multiweb $m$. 

\begin{rema}\label{rema:rvindependentofcilia}
Because changing the ciliation $L$ only changes the quantum trace $\htrq(m)$ by a sign, it follows that $X_n\geq0$ is actually independent of the choice of cilia (including nonpositive cilia).  
\end{rema}

Two probability measures on $\Omega_n$ are now described.   The \emph{natural measure} $P$ is defined by 
$$
P(m):=\frac{\mathbf{tr}_1(m)}{\mathbf{Z}_1^+}
$$ 
where $\mathbf{Z}_1^+$ is the classical $n$-dimer partition function (Remark \ref{rema:classicalndimerpartitionfunction}).  The \emph{uniform measure} $P^u$ is defined by 
$$
P^u(m):=\frac{1}{|\Omega_n|}.
$$  
Of interest are the two expectation values 
$$
\E(X_n):=\sum_{m\in\Omega_n}X_n(m)P(m)
$$ 
and, to a lesser extent (at least in this paper), 
$$
\E^u(X_n):=\sum_{m\in\Omega_n}X_n(m)P^u(m).
$$ 

\begin{prop}\label{secondderiv}
The random variable $X_n$ satisfies the following enjoyable properties.
\begin{enumerate}
\item  
The expectation value with respect to the natural measure $P$ can be computed as 
\be\label{eq:expectationvalueofXn}
\E(X_n)=\left.\frac{d^2}{dq^2}\right|_{q=1}\log\Zq^+=\frac{1}{\mathbf{Z}_1^+}\left.\frac{d^2}{dq^2}\right|_{q=1}\Zq^+.
\ee
\item  
$X_n(m)$ depends only on the isotopy class of $m$ (respecting edge multiplicities).  
\item  
$X_n(m)$ is additive on the connected components of $m$.  
\item  
$X_n(m)=0$ on trivial components (edges of multiplicity $n$).  
\end{enumerate}
\end{prop}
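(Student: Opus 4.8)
The plan is to reduce everything to second derivatives at $q=1$ by using symmetry to kill first derivatives, and to exploit that $X_n$ is defined through a logarithm, so that it is additive precisely where $\htrq$ is multiplicative. First I would record the elementary fact that for a Laurent polynomial $f(q)$ with $f(1)\neq0$ one has $(\log f)''=f''/f-(f'/f)^2$, so that whenever $f'(1)=0$ the second logarithmic derivative at $q=1$ collapses to $f''(1)/f(1)$. Applied to $f=\htrq(m)$, whose first derivative at $q=1$ vanishes by symmetry (equation (\ref{eq:firstderivativeiszero})) and which satisfies $\htrq(m)|_{q=1}=\mathbf{tr}_1(m)\neq0$ by Proposition \ref{prop:positivecilia}, this identity is exactly the equality of the two displayed formulas defining $X_n(m)$. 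The same identity applied to $f=\Zq^+$, symmetric with $\Zq^+|_{q=1}=\mathbf{Z}_1^+\neq0$ by Theorem \ref{theo:symmetricZq}, yields the two right-hand expressions in (\ref{eq:expectationvalueofXn}). For part (1) it then remains to compute
\[
\E(X_n)=\sum_{m\in\Omega_n}\left(\frac{1}{\mathbf{tr}_1(m)}\left.\frac{d^2}{dq^2}\right|_{q=1}\htrq(m)\right)\frac{\mathbf{tr}_1(m)}{\mathbf{Z}_1^+}=\frac{1}{\mathbf{Z}_1^+}\left.\frac{d^2}{dq^2}\right|_{q=1}\sum_{m\in\Omega_n}\htrq(m),
\]
where the natural weight $\mathbf{tr}_1(m)$ in $P(m)$ cancels the factor $1/\mathbf{tr}_1(m)$ in $X_n(m)$; since $\Omega_n$ is finite the derivative passes through the sum and $\sum_m\htrq(m)=\Zq^+$, completing part (1).

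For part (2), note that $X_n(m)$ is a function of $\htrq(m)=q^{-N\binom{n}{2}}\trq(m)$ only, and $\trq(m)=\trq(\mathbf{W}_m)/\prod_e[m_e]!$ depends on $m$ solely through the isotopy class of the split web $\mathbf{W}_m$ (by the invariance in Theorem \ref{theo:qtraceofweb}) together with the combinatorial data $N$ and the multiplicities $m_e$, all of which are preserved under isotopies respecting multiplicities; hence $X_n(m)$ is such an invariant. For part (3), I would first show that $\htrq$ is multiplicative under separable unions: if $\mathbf{W}_m=\mathbf{W}_1\sqcup\mathbf{W}_2$ with the pieces isotopic into disjoint balls, then $\trq(m)=\trq(m_1)\trq(m_2)$ by Proposition \ref{prop:multiplicative}, and since $N=N_1+N_2$ the normalization splits as $q^{-N\binom{n}{2}}=q^{-N_1\binom{n}{2}}q^{-N_2\binom{n}{2}}$, giving $\htrq(m)=\htrq(m_1)\htrq(m_2)$. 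Taking logarithms converts this product into a sum, so $X_n=\left.\frac{d^2}{dq^2}\right|_{q=1}\log\htrq$ is additive. Because the multiweb is planar, its distinct connected components lie in disjoint regions of the plane and are therefore unlinked, so they can be isotoped into disjoint balls in $\R^3$ and fall under Proposition \ref{prop:multiplicative}; this yields additivity on connected components.

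For part (4), by the additivity of part (3) it suffices to evaluate $X_n$ on a single trivial component: one black and one white vertex joined by an edge of multiplicity $n$. Its split web $\mathbf{W}_{m_0}$ consists of $n$ parallel untwisted strands from a source to a sink, so by the building blocks of Section \ref{sssec:proofofLaurentproperty} its quantum trace is $T_-(T_+(1))=\sum_{\sigma\in\mathfrak{S}_n}(-q)^{2\ell(\sigma)}=\sum_{\sigma\in\mathfrak{S}_n}q^{2\ell(\sigma)}=q^{\binom{n}{2}}[n]!$. Dividing by $[m_e]!=[n]!$ gives $\trq(m_0)=q^{\binom{n}{2}}$, and with $N_0=1$ the normalization yields $\htrq(m_0)=q^{-\binom{n}{2}}q^{\binom{n}{2}}=1$; hence $\log\htrq(m_0)=0$ and $X_n(m_0)=0$.

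The individual computations are routine; the point requiring the most care is part (1), where one must invoke the vanishing of the first derivatives (for both $\htrq(m)$ and $\Zq^+$) consistently, so that the logarithmic and ratio formulas genuinely coincide, after which the cancellation of classical traces between $P(m)$ and $X_n(m)$ is immediate. The only genuinely geometric input is the separability claim in part (3), which rests on planarity to guarantee that the connected components are unlinked in $\R^3$.
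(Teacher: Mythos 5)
Your proposal is correct and follows essentially the same route as the paper's (much terser) proof: part (1) is the same calculation from the definitions using positivity and the vanishing of first derivatives at $q=1$, part (2) is isotopy invariance of $\trq$, part (3) combines Proposition \ref{prop:multiplicative} with the fact that the normalization $q^{-N\binom{n}{2}}$ splits as $N=N_1+N_2$, and part (4) reduces to $\htrq(m)=1$ on trivial components. The extra details you supply --- the explicit separability-in-$\R^3$ argument for planar components and the building-block computation $T_-(T_+(1))=q^{\binom{n}{2}}[n]!$ --- are correct elaborations of steps the paper leaves implicit.
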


\begin{proof}
The first item is a simple calculation from the definitions, requiring the positivity.  The second item follows from the isotopy invariance of the quantum trace.  The third item is an elementary computation using (\ref{eq:firstderivativeiszero}) together with the multiplicative property of the quantum trace (Proposition \ref{prop:multiplicative}), which also holds for the normalized version (\ref{eq:normalizedqtrace}).  The last item is because $\htrq(m)=1$ for trivial $m$.  
\end{proof}

\begin{rema}
By Theorem \ref{theo:dependenceonplanarembedding}, the random variable $X_n$ depends only on the combinatorial structure of $G$, not on any particular planar embedding or ciliation (see also Remark \ref{rema:rvindependentofcilia}).  This is evident in the case $n=2$ by Proposition \ref{prop:n=2rv}.  
\end{rema}

\subsection{Related (local) random variable}\label{sssec:localrv}

In practice, since $X(m):=X_n(m)\geq0$ is independent of the choice of any, not necessarily positive, cilia (Remark \ref{rema:rvindependentofcilia}), it is advantageous to do computations with respect to a trivial ciliation $L$ (constructed, for instance, as in the proof of Proposition \ref{id} or as in Remark \ref{3conn}).  Then, by (\ref{eq:formulaoftracefordiagonalconnection}) (see also \ref{eq:trivialciliaformula}),
$$
X(m)
=\frac{1}{\mathbf{tr}_1(m)}\left.\frac{d^2}{dq^2}\right|_{q=1}\htrq(m)
=\frac{1}{|\mathbf{tr}_1(m)|}\left.\frac{d^2}{dq^2}\right|_{q=1}\sum_c\prod_{e}q^{\binom{m_e}{2}}\prod_v q^{\hat{\ell}(\sigma_{cv}^L)}
=\frac{1}{|\mathbf{tr}_1(m)|}\left.\frac{d^2}{dq^2}\right|_{q=1}\sum_c q^{Y_c^L}
$$
where (1) $\sigma_{cv}^L\in\mathfrak{S}_n$ is the local permutation at $v$ corresponding to the edge coloring $c$ relative to the trivial ciliation $L$; (2) $\hat{\ell}(\sigma_{cv}^L):=\ell(\sigma_{cv}^L)-\frac{1}{2}\binom{n}{2}$ is the \emph{centered length}; and, (3) $Y_c^L:=\sum_v Y_{cv}^L:=\sum_v(\hat{\ell}(\sigma_{cv}^L)+\sum_{e\sim v}\frac{1}{2}\binom{m_e}{2})$.  (Recall also that $|\mathbf{tr}_1(m)|$ is the number of edge colorings $c$ of $m$.)  Evaluating the second derivative, 
$$
X(m)
=\frac{1}{|\mathbf{tr}_1(m)|}\sum_c Y_c^L(Y_c^L-1)
=\frac{1}{|\mathbf{tr}_1(m)|}\sum_c (Y_c^L)^2
$$
where the last equality follows by the symmetry of the quantum trace, that is, by $\frac{d}{dq}|_{q=1}\htrq(m)=\sum_c Y_c^L$ together with (\ref{eq:firstderivativeiszero}) (which is valid for nonpositive cilia, as well).  

One can think of $Y^L(c):=\sum_v Y^L_v(c):=Y_c^L=\sum_v Y_{cv}^L$ for trivial ciliation $L$ as a `local' random variable (i.e. depending only on the behavior at the vertices $v$), called the \emph{total centered length (relative to $L$ and adjusted for edge multiplicities $m_e$)}, as follows.  The domain of $Y^L$ is the Cartesian product $(\Omega_1)^n$, which can be thought of as the set of colored multiwebs $(m,c)$ (the $i$-th dimer cover corresponds to the $i$-th color of $m$).  For the corresponding uniform probability measure, where $c$ gets probability $\frac{1}{|\Omega_1|^n}=\frac{1}{\mathbf{Z}_1^+}$, the expectation value of $Y^L$ is $0$ by symmetry, as just explained.  Its variance is related to $X$, from (\ref{eq:expectationvalueofXn}) and the above calculation, by
$$
\E(X)
=\frac{1}{\mathbf{Z}_1^+}\sum_m\left.\frac{d^2}{dq^2}\right|_{q=1}\sum_c q^{Y_c^L}
=\sum_m\sum_c \frac{(Y_c^L)^2}{\mathbf{Z}_1^+}
=\mathrm{Var}(Y^L).
$$

\begin{rema}
\begin{enumerate}
\item  
Unlike $X$, the local random variable $Y^L$ does indeed depend on the choice of trivial ciliation $L$.
\item  
This `localization' of $X_n$ is taken advantage of in Section \ref{loopdensity}, when $n=2$, to compute the expected density of loops in the infinite honeycomb and square grid graphs.  
\end{enumerate}
\end{rema}

\subsection{The \texorpdfstring{$n=2$}{n=2} case}\label{sssec:probabilitiesn=2}

When $n=2$, there are two types of connected components of multiwebs $m\in\Omega_2$:  loops and doubled edges.  It is not hard to see, from multiple perspectives, that $\htrq(\text{loop})=[2]$ and $\htrq(\text{doubled edge})=1$.  By the multiplicative property of quantum traces, one gathers 
\be\label{eq:n=2partitionfunction}
\Zq^+=\sum_{m\in\Omega_2}[2]^{L(m)}
\ee 
where $L:\Omega_2\to\Z_{\geq0}$ is the random variable defined by 
$$
L(m):=\text{the number of loop components of }m.
$$  

(In the general $n=n$ case, one can similarly compute the quantum trace in terms of the winding numbers of colored loops; see \cite{moy} as explained in \cite[Section 1.7]{sikora1}.)  

The following statement further motivates the higher rank random variables $X_n$.  

\begin{prop}\label{prop:n=2rv}
The two random variables $X_2=L$ coincide.  
\end{prop}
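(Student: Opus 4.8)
The plan is to reduce everything to the explicit formula $\htrq(m)=[2]^{L(m)}$ recorded at the start of Section \ref{sssec:probabilitiesn=2}, which follows from the multiplicativity of the quantum trace (Proposition \ref{prop:multiplicative}) together with the facts that a single loop contributes $\htrq(\text{loop})=[2]=q+q^{-1}$ and a doubled edge contributes $\htrq(\text{doubled edge})=1$. Taking logarithms turns this product formula into the additive expression $\log\htrq(m)=L(m)\log(q+q^{-1})$, so that the second derivative at $q=1$ factors as $X_2(m)=L(m)\cdot\frac{d^2}{dq^2}\big|_{q=1}\log(q+q^{-1})$. Thus the whole statement comes down to a single one-variable computation: evaluating $\frac{d^2}{dq^2}\big|_{q=1}\log(q+q^{-1})$.

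First I would carry out this computation. Writing $u(q)=q+q^{-1}$, one has $u(1)=2$, $u'(1)=0$, and $u''(1)=2$, whence $\frac{d}{dq}\log u = u'/u$ vanishes at $q=1$ (consistent with the symmetry statement (\ref{eq:firstderivativeiszero})) and $\frac{d^2}{dq^2}\log u=\frac{u''u-(u')^2}{u^2}$ evaluates at $q=1$ to $\frac{2\cdot 2-0}{4}=1$. Substituting back gives $X_2(m)=L(m)$, which is exactly the claim.

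As a consistency check, and as an alternative route that avoids the closed formula, I could instead invoke the structural properties of $X_n$ established in Proposition \ref{secondderiv}: additivity over connected components (item 3) and vanishing on doubled edges, that is, on trivial components (item 4). These immediately give $X_2(m)=\sum_{\text{loop components of }m}X_2(\text{loop})$, reducing matters to computing $X_2$ on a single loop; and $\htrq(\text{loop})=[2]$ then yields $X_2(\text{loop})=1$ by the same one-variable derivative. Either way, the only real content is the elementary identity $\frac{d^2}{dq^2}\big|_{q=1}\log(q+q^{-1})=1$, so I do not anticipate any serious obstacle. The one point demanding care is conceptual rather than computational: it is the logarithm that linearizes the exponent $L(m)$, and it is precisely the vanishing of the first derivative (the symmetry of $\htrq(m)$) that guarantees $X_2$ picks up exactly the second-derivative-of-$\log[2]$ coefficient, rather than some mixture of first- and second-order contributions.
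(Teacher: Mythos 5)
Your proposal is correct and is essentially the paper's own argument: the paper's proof is simply ``a simple calculation from (\ref{eq:n=2partitionfunction})'', i.e.\ from $\htrq(m)=[2]^{L(m)}$, and your computation $X_2(m)=L(m)\cdot\frac{d^2}{dq^2}\big|_{q=1}\log(q+q^{-1})=L(m)$ is exactly that calculation spelled out. Your observation that the vanishing first derivative (\ref{eq:firstderivativeiszero}) is what makes the log-second-derivative reduce cleanly to $\frac{f''(1)}{f(1)}$ is the right point of care, and your alternative route via Proposition \ref{secondderiv} is a fine consistency check but not a genuinely different argument.
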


\begin{proof}
This is a simple calculation from (\ref{eq:n=2partitionfunction}).  
\end{proof}

Due to the explicit form (\ref{eq:n=2partitionfunction}) of the $n=2$ quantum partition function $\Zq^+$, as $[2]=q+q^{-1}=1$ when $q=e^{i\frac{\pi}{3}}$ one can also easily compute the expectation value of $L$ with respect to the uniform measure $P^u$ as a first derivative.

\begin{prop}\label{prop:uniform_measure}
One has
$$
 \E^u(L) = \frac{1}{\sqrt{3}  e^{i\frac{\pi}{6}}}  \left. \frac{d}{dq} \right|_{q=e^{i\frac{\pi}{3}}} \log\Zq^+.  
$$
\qed\end{prop}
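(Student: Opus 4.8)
The plan is to exploit the explicit, product-free formula (\ref{eq:n=2partitionfunction}), namely $\Zq^+=\sum_{m\in\Omega_2}[2]^{L(m)}$, together with the special feature of the value $q=e^{i\pi/3}$: there the quantum integer $[2]=q+q^{-1}$ equals $2\cos(\pi/3)=1$. First I would record this evaluation, observing that substituting $q=e^{i\pi/3}$ into (\ref{eq:n=2partitionfunction}) collapses every term $[2]^{L(m)}$ to $1$, so that $\Zq^+$ specializes to $|\Omega_2|$ at this value of $q$. This is precisely the normalizing constant of the uniform measure $P^u$, and it is what makes $q=e^{i\pi/3}$ the right point at which to differentiate.

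Next I would differentiate $\log\Zq^+$. By the chain rule, $\frac{d}{dq}\log\Zq^+=(\Zq^+)^{-1}\frac{d}{dq}\Zq^+$, and differentiating (\ref{eq:n=2partitionfunction}) term by term gives $\frac{d}{dq}\Zq^+=\sum_{m\in\Omega_2} L(m)\,[2]^{L(m)-1}\,(1-q^{-2})$, using $\frac{d}{dq}[2]=1-q^{-2}$. The one genuinely numerical step is to evaluate the scalar factor $1-q^{-2}$ at $q=e^{i\pi/3}$: since $q^{-2}=e^{-2i\pi/3}=-\tfrac12-i\tfrac{\sqrt3}{2}$, one finds $1-q^{-2}=\tfrac32+i\tfrac{\sqrt3}{2}=\sqrt3\,e^{i\pi/6}$, which is exactly the reciprocal of the constant appearing in the statement.

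Combining these observations, at $q=e^{i\pi/3}$ the factors $[2]^{L(m)-1}$ are all equal to $1$, so $\frac{d}{dq}\Zq^+$ specializes to $\sqrt3\,e^{i\pi/6}\sum_{m}L(m)$, while $\Zq^+$ specializes to $|\Omega_2|$. Therefore $\frac{d}{dq}\log\Zq^+$ specializes to $\sqrt3\,e^{i\pi/6}\cdot\frac{1}{|\Omega_2|}\sum_{m}L(m)=\sqrt3\,e^{i\pi/6}\,\E^u(L)$, and dividing through by $\sqrt3\,e^{i\pi/6}$ gives the claimed identity.

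There is essentially no conceptual obstacle here: once (\ref{eq:n=2partitionfunction}) is available, the argument is a single-variable calculus computation. The only points requiring care are bookkeeping ones. One should verify that $q=e^{i\pi/3}$ is an admissible value of $q$: it is a primitive sixth root of unity, hence not a fourth root of unity, so it is permitted by the $n=2$ hypothesis of Definition \ref{defi:quantuminteger}, and in particular $[2]=1\neq0$ there, so no denominator vanishes and the manipulations are justified. The other point is the trigonometric identification $1-q^{-2}=\sqrt3\,e^{i\pi/6}$, which is what accounts for the precise prefactor $\frac{1}{\sqrt3\,e^{i\pi/6}}$ in the statement.
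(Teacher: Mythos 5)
Your proof is correct and matches the paper's approach: the paper states this proposition with no written proof precisely because it follows from the observation (made just before the statement) that $[2]=q+q^{-1}=1$ at $q=e^{i\frac{\pi}{3}}$, so that $\Zq^+$ specializes to $|\Omega_2|$ and the logarithmic derivative picks up $\E^u(L)$ times the scalar $\frac{d}{dq}[2]\big|_{q=e^{i\pi/3}}=1-q^{-2}=\sqrt{3}\,e^{i\frac{\pi}{6}}$, exactly as you computed. Your additional checks (admissibility of $q=e^{i\frac{\pi}{3}}$ under Definition \ref{defi:quantuminteger} and the evaluation of the prefactor) are the right bookkeeping points and complete the argument.
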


\section{Examples}
\label{sec:examples}

\subsection {Cycle graph}

Let $C_N$ be the cycle graph in $\R^2$ with $2N$ vertices ($N$ white and $N$ black) and $2N$ edges.  The $n$-multiwebs in $C_N$ (of which there are only $n+1$) can be described as follows. Pick an edge such that going from black-to-white goes in the counter-clockwise direction around the cycle. For each $k \in \{0,1,\dots,n\}$, there is a multiweb $m$ whose edge multiplicities alternate $k,n-k,k,n-k,\dots$ around the cycle, starting with the given edge.  We will compute the traces of these multiwebs via (\ref{eq:formulaoftracefordiagonalconnection}) taking $\Phi=I_q$ to be a quantum identity connection. To do so, we will choose the cilia all pointing outside of the cycle (which is a positive ciliation $L=L^+$), and we will represent the connection by putting the matrix $Q^{N-1}$ on the distinguished edge.  (See Section \ref{ssec:quantumidentityconnection} for the definition of $Q$.)

For a fixed $k$, let $m_k$ be the multiweb described above. There are precisely $\binom{n}{k}$ edge colorings, indexed by the different subsets $I \in \binom{\{1,2,\dots,n\}}{k}$. Let $I = \{i_1, i_2,\dots,i_k\}$ be written in order ($i_1 < i_2 < \dots < i_k$) and let $J = \{1,2,\dots,n\} \setminus I = \{j_1,j_2,\dots,j_{n-k}\}$ be the complement, also indexed in increasing order. In (\ref{eq:formulaoftracefordiagonalconnection}), the permutation $\sigma_v$ is the same at all vertices, and is given by 
$$ 
\sigma = i_1 i_2 \dots i_k j_1 j_2 \dots j_{n-k}. 
$$
Therefore, the factor of $\prod_v (-q)^{\ell(\sigma_v)}$ in (\ref{eq:formulaoftracefordiagonalconnection}) will be $q^{2N \ell(\sigma)}$. Half the edges have multiplicity $k$, the other half have multiplicity $n-k$, so the product of $q^{\binom{m_e}{2}}$ factors becomes  $q^{N( \binom{k}{2} + \binom{n-k}{2})} = q^{N ( \binom{n}{2} - k(n-k) )}$. Also, the matrices $\phi_{bw}$ are the identity matrix at all but one edge, where it is $\phi = Q^{N-1}$. So the $\prod_e \prod_{i \in S_e} (\phi_e)_{ii}$ product is $q^{( N - 1 )\sum_{i \in I} (n+1-2i)}$. All together, the contribution from (\ref{eq:formulaoftracefordiagonalconnection}) corresponding to a given coloring $I$ becomes
$$ 
q^{2N  \ell(\sigma)} q^{N\binom{n}{2}-Nk(n-k)} q^{( N-1 ) \left(\sum_{i \in I} n+1 - 2i\right)} 
= q^{N\binom{n}{2}+N(k^2+k) - k(n+1)} q^{2N  \ell(\sigma) - (2N-2)\left(\sum_{i \in I} i\right)}.  
$$
Note that the first factor on the right hand side ($q$ to the power $N\binom{n}{2}+N(k^2+k) - k(n+1)$) depends only on $k$, and not on the particular coloring $I$.

The permutation $\sigma$ is a Grassmannian permutation (i.e. it has at most 1 descent). There is a well-known bijection between such permutations and partitions whose Young diagram fits inside a $k \times (n-k)$ rectangle.   Under this bijection, the length of the permutation $\sigma$ corresponding to $I$ is equal to $|\lambda|$ (the size of the partition), and also
\be\label{eq:grassmannpermutation} 
\sum_{i \in I} i = |\lambda| + \binom{k+1}{2}.
\ee 

Putting this all together, and making the substitutions $\ell(\sigma) = |\lambda|$ and $(\ref{eq:grassmannpermutation})$, we get that the contribution from the coloring $I$ (except for the $q^{N\binom{n}{2}}$ term) is 
$$
q^{N(k^2+k) - k(n+1)+2N  \ell(\sigma) - (2N-2)\left(\sum_{i \in I} i\right)}=q^{2|\lambda| - k(n-k)}.
$$ 
Summing over all possible $I \in \binom{\{1,2,\dots,n\}}{k}$ (equivalently, summing over all corresponding partitions), we get
$$ 
\trq(m_k) = q^{N\binom{n}{2}-k(n-k)} \sum_\lambda q^{2{|\lambda|}}. 
$$
It is well-known that this sum is related to the Gaussian $q$-binomial coefficients. Specifically, 
$$
\sum_\lambda q^{2|\lambda|} = q^{k(n-k)} \begin{bmatrix} n \\ k \end{bmatrix}.
$$ 
The extra $q^{k(n-k)}$ factor cancels, and at last we see that 
$$
\trq(m_k) = q^{N\binom{n}{2}}\begin{bmatrix} n \\ k \end{bmatrix}. 
$$ 
The total partition function $\Zq^+$ from (\ref{Zqdef}) is the sum over all $k$, and by a version of the $q$-binomial theorem, this factors as
\be\label{eq:Zqcycle} 
\Zq^+ = \sum_{k=0}^n \begin{bmatrix} n \\ k \end{bmatrix}  = \prod_{i=1}^n (1 + q^{n+1-2i}) = \Det(I+Q). 
\ee
Therefore, this quantum partition function $\Zq^+$ is particularly simple (it can be computed in polynomial time).  It is not expected for general $\Zq^+$ to have such determinantal formulas.

\begin{exam}
Let us compute the random variable $X_n$ from Section \ref{ssec:randomvariable} for this cycle graph.  
As a warm up, for $n=3$ and the multiweb $m_1$, the normalized trace $\htrq(m_1)=[3]$ and one computes 
$$
X_3(m_1)=\frac{1}{\mathbf{tr}_1(m_1)}\left.\frac{d^2}{dq^2}\right|_{q=1}(q^{-2}+1+q^2)=\frac{8}{3}.
$$  
For general $n$, by a simple computation the expectation value of $X_n$ is, by (\ref{eq:expectationvalueofXn}) plus (\ref{eq:Zqcycle}), 
$$
\E(X_n)=\frac{n^3-n}{12} = \frac{1}{2} \binom{n+1}{3}.
$$
\end{exam}

\subsection{Snake graphs}

We will compute the $n=2$ quantum partition function $\Zq^+(G)$ for two families of so-called \emph{snake graphs}, consisting of a single sequence, or `snake', of boxes placed horizontally and vertically. They are certain types of skew Young diagrams, also called `border strips' or `ribbon shapes', which are indexed by rational numbers $\frac{r}{s} \in \mathbb{Q}$ (using continued fractions).  Certain versions of the classical (not quantum) dimer partition functions of these graphs  appear as cluster variables in cluster algebras of type $A$ \cite{msw_13}, and also can be used to compute the $q$-deformed rational numbers $\left[ \frac{r}{s} \right]_q$ of Morier-Genoud and Ovsienko \cite{mgo_20}.  See \cite{mosz_23} for a study of classical higher rank dimers on snake graphs, building on the $n=1$ case \cite{cs_18}.  

\subsubsection {Horizontal snake}

Consider the graph $G_m$, which is the $2 \times m$ grid (formed by $m-1$ squares attached left-to-right).  It is well-known (and an easy exercise to show) that the number of dimer covers of this graph is $f_m$, the Fibonacci number.  Let $\Zq^+(G_m)$ be its quantum partition function for $n=2$. As a convention, we define $\Zq^+(G_0) = 1$. 

\begin{prop}
The generating function $F(x)$ for the sequence $z_m = \Zq^+(G_m)$ is given by
$$ 
F(x) = \sum_{m=0}^\infty z_m x^m = \frac{1-x}{1 - 2x - [2] x^2 + x^3}. 
$$
\end{prop}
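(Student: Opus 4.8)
The plan is to set up a last-column (transfer-matrix) recursion for the $n=2$ loop model, exploiting the fact (equation \ref{eq:n=2partitionfunction}) that for $n=2$ one has $z_m=\Zq(G_m)=\sum_{M\in\Omega_2(G_m)}[2]^{L(M)}$, a sum over $2$-multiwebs weighted by $[2]$ per loop. Since the strip has width $2$, the interface crossing a vertical cut is very simple and I expect a three-term recursion, matching the cubic denominator. The new feature compared with the classical Fibonacci count is that loops are nonlocal: a loop contributes its weight $[2]$ only at the moment it closes. To track this I would introduce a single auxiliary sequence $w_m$, the analogous weighted sum over configurations on $G_m$ that are closed except for one open strand running between the top and bottom vertices of the last column. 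In a width-$2$ planar strip two dangling ends have no choice but to be joined by exactly one such strand, so one auxiliary sequence suffices.

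First I would analyze the last column of a closed multiweb. Writing $a\in\{0,1,2\}$ for the multiplicity of the vertical edge in column $m$, conservation at the two vertices of column $m$ forces the two horizontal edges coming from column $m-1$ to have the common multiplicity $2-a$. This gives exactly three cases: $a=2$ (a doubled vertical edge capping a closed multiweb on $G_{m-1}$), $a=0$ (two doubled horizontal edges that saturate column $m-1$ and reduce to a closed multiweb on $G_{m-2}$), and $a=1$ (a strand threading through column $m$ that joins the top and bottom vertices of column $m-1$). These give $z_m=z_{m-1}+z_{m-2}+[2]\,w_{m-1}$; the factor $[2]$ in the last term is the crux, since in the $a=1$ case the threading strand closes the open strand of the $w_{m-1}$-configuration into one new loop. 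A parallel analysis of the last column of a $w_m$-configuration (now with $a\in\{0,1\}$) gives $w_m=z_{m-1}+w_{m-1}$, with no new loop created in either case.

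With these recursions and the initial data $z_0=1$, $w_0=0$ (and the easily checked $z_1=1$, $w_1=1$, $z_2=2+[2]$), the remainder is routine generating-function algebra, which I would not grind through. Putting $Z(x)=\sum_m z_m x^m$ and $W(x)=\sum_m w_m x^m$, the relation $w_m=z_{m-1}+w_{m-1}$ sums to $W(x)=xZ(x)/(1-x)$. Substituting this into the generating-function form of the $z$-recursion and clearing the factor $1-x$ yields $Z(x)\,(1-2x-[2]x^2+x^3)=1-x$, which is precisely the asserted identity $F(x)=Z(x)$.

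I expect the main obstacle to be the last-column case analysis, and specifically the correct accounting of loop closure that produces the $[2]$ coefficient in the $z$-recursion. A key point to get right here is that one sums over multiwebs (unordered overlays of two dimer covers), not over ordered pairs of dimer covers: the naive ordered count over-weights the off-diagonal terms and destroys the recursion. It is also worth verifying carefully that in the $a=1$ case exactly one loop is created, using that a $w_{m-1}$-configuration contains a unique open arc between the two boundary vertices together with some closed loops, so that attaching the column-$m$ strand closes precisely that arc.
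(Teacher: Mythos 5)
Your proof is correct, and it is organized differently from the paper's. The paper peels off the structure at the top-right vertex and, in the case where that vertex lies on a loop, enumerates the loop by its extent: in a width-$2$ strip every loop is the boundary of a run of consecutive squares ending at the last square, so the loop case contributes $[2]\sum_{k=1}^{m-1} z_{m-k-1}$. This yields the history-dependent recurrence $z_m = z_{m-1}+z_{m-2}+[2]\sum_{k=0}^{m-2}z_k$, which the paper then localizes by subtracting consecutive instances to get $z_m = 2z_{m-1}+[2]z_{m-2}-z_{m-3}$, and finishes with standard generating-function algebra. You instead run a genuine transfer-matrix argument, introducing the auxiliary sequence $w_m$ of open-strand configurations and deriving the coupled local system $z_m = z_{m-1}+z_{m-2}+[2]w_{m-1}$, $w_m = z_{m-1}+w_{m-1}$. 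The two are equivalent in content: your second recurrence unwinds to $w_m=\sum_{j=0}^{m-1}z_j$, so your $[2]w_{m-1}$ term is literally the paper's history sum, and your elimination $W(x)=xZ(x)/(1-x)$ is the generating-function counterpart of the paper's differencing trick. What your route buys is locality and robustness: you never need the geometric observation that all loops in the strip are boundaries of consecutive runs of squares (you only need the unique-open-arc fact, which you correctly flag and justify), and the state-space setup would extend to wider strips where loops are no longer so simply classified. What the paper's route buys is economy: no auxiliary sequence, and the cubic recurrence drops out of a single case analysis plus one subtraction. Your case analyses ($a\in\{0,1,2\}$ for closed configurations, $a\in\{0,1\}$ for open ones), the loop-closure accounting producing the single factor of $[2]$, the initial data, and the final algebra all check out.
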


\begin {proof}
By the well-known relationship between rational generating functions and linear recursive sequences, the result can be obtained by a simple calculation   once we know that the coefficients satisfy the linear recurrence
$$ 
z_m = 2 z_{m-1} + [2] z_{m-2} - z_{m-3}
$$
together with the initial conditions $z_0 = z_1 = 1$ and $z_2 = 2 + [2]$.   We will obtain this recurrence as a consequence of another recurrence, which is more apparent but less elegant.   Consider the top-right vertex of $G_m$. Any double dimer cover must use exactly 2 of the edges adjacent to this vertex (possibly with multiplicity).   There are 3 cases, displayed in Figure \ref{fig:Recurrence}.    In the first case, the double edges contribute a factor of $1$, and what remains is a double dimer cover of $G_{m-1}$.    In the second case, the double edges again contribute a factor of $1$, and the remaining part is a double dimer cover of $G_{m-2}$.   In the third case, there is a cycle surrounding at least the last square. This divides into $m-1$ cases, since this cycle could surround   the last $k$ squares for any $k=1,2,\dots,m-1$. As seen before, a cycle contributes a factor of $[2]$ to the trace,  and the remaining part is a   double dimer cover of $G_{m-k-1}$. We therefore have the recurrence
$$ 
z_m = z_{m-1} + z_{m-2} + [2] \sum_{k=1}^{m-1} z_{m-k-1} = z_{m-1} + z_{m-2} + [2] \sum_{k=0}^{m-2} z_k. 
$$
Taking the difference of two instances of this equation (for $m$ and $m-1$), we get
$$ 
z_m - z_{m-1} = z_{m-1} + [2] z_{m-2} - z_{m-3}. 
$$
Rearranging gives the desired recurrence.
\end {proof}

\begin {rema} \label{rmk:uniform_gf}
When $q = e^{i\frac{\pi}{3}}$, then $[2] = 1$, and we get $\frac{1-x}{1-2x-x^2+x^3}$. This expression appeared in \cite{mosz_23} as the generating function  for the number of double dimer covers of the graphs $G_m$.
\end {rema}

\begin {exam}
Using the results of Section \ref{sssec:probabilitiesn=2}, we can compute the average number of loops of double dimer covers on $G_m$. The second derivative of this generating function with respect to $q$, at $q=1$, is
$$ 
\left. \frac{d^2}{dq^2} \right|_{q=1} F(x) = \frac{2x^2(1-x)}{(1-2x-2x^2+x^3)^2}. 
$$
If we expand this rational expression as a series $\sum_m c_m x^m$, then the expected number of loops in a double dimer cover of $G_m$   (with respect to the natural measure)   is equal to $\frac{c_m}{f_m^2}$, where $f_m$ is the $m$th Fibonacci number. One can analyze the asymptotics (for example by taking the partial fraction expansion of this rational function) to see that the expected number of loops grows linearly with $m$, and the growth rate is
$$ 
\lim_{m \to \infty} \frac{1}{m} \E(L) = \lim_{m \to \infty} \frac{1}{m}\frac{c_m}{f_m^2} = \frac{\sqrt{5}-1}{5} = \frac{2}{5 \varphi} \approx 0.2472 
$$
where $\varphi$ is the golden ratio.
\end {exam}

\begin {exam}
One can similarly use Proposition \ref{prop:uniform_measure} to say something about the distribution of the number of loops  with respect to the uniform measure on $\Omega_2(G_m)$. Here, we get
$$ 
\frac{1}{\sqrt{3}e^{i\frac{\pi}{6}}}\left. \frac{d}{dq} \right|_{q=e^{i\frac{\pi}{3}}} F(x) = \frac{x^2(1-x)}{(1-2x-x^2+x^3)^2}. 
$$
Let $a_m$ be the coefficients of the series expansion of this rational expression, and let $b_m$ be the coefficients of $\frac{1-x}{1-2x-x^2+x^3}$ (see Remark \ref{rmk:uniform_gf} above). Then $\frac{a_m}{b_m} = \E^u(L)$ is the expected number of loops in a double dimer cover of $G_m$, with respect to the uniform measure. Again, it grows linearly with $m$, with growth rate
$$ 
\lim_{m \to \infty} \frac{1}{m} \E^u(L) = \lim_{m \to \infty} \frac{1}{m}\frac{a_m}{ b_m} = \frac{1}{7} ( 1 + 2\rho - \rho^2 ) \approx 0.1938 
$$
where $\rho = 2\cos(\frac{\pi}{7})$ is the root of $1-2x-x^2+x^3$ of largest magnitude.
\end {exam}

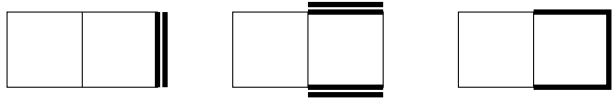
\begin{figure}[t]
    \centering
    \begin{tikzpicture}
        \foreach \x in {0, 3, 6} {
            \draw (\x,0) -- (\x+2,0) -- (\x+2,1) -- (\x,1) -- cycle;
            \draw (\x+1,0) -- (\x+1,1);
        }

        \draw[black, line width=2] (2,1) -- (2,0);
        \draw[black, line width=2] (2.1,1) -- (2.1,0);

        \begin{scope}[shift={(3,0)}]
            \draw[black, line width=2] (2,1) -- (1,1);
            \draw[black, line width=2] (2,1.1) -- (1,1.1);
            \draw[black, line width=2] (2,0) -- (1,0);
            \draw[black, line width=2] (2,-0.1) -- (1,-0.1);
        \end{scope}

        \begin{scope}[shift={(6,0)}]
            \draw[black, line width=2] (1,1) -- (2,1) -- (2,0) -- (1,0);
        \end{scope}
    \end{tikzpicture}
    \caption{Recurrence for double dimer covers on the $2\times m$ grid.}
    \label{fig:Recurrence}
\end{figure}

\subsubsection{Zig zag snake}

Now, we consider the other extreme case of $G_{m}$ where the graph is a `zig zag' or `staircase' with $m-1$ boxes,     where boxes alternately go up, right, up, right, etc.

It is not hard to see that (1) there are $m$ dimer covers; and, (2) any double dimer cover can have at most $1$ cycle, and, after choosing a cycle, there is a unique way to complete it to a valid double dimer cover using only doubled edges.  Therefore,     
$$ 
\Zq^+(G_m) = m + \binom{m}{2} [2]. 
$$ 
From this, by direct calculation (and putting $\Zq^+(G_0)=1$), the generating function is
$$ 
F(x) = \sum_{m=0}^\infty \Zq^+(G_m) x^m = 1+\frac{x}{(1-x)^2}+\frac{[2]x^2}{(1-x)^3}.
$$

\subsection{Density of loops for the infinite honeycomb}\label{loopdensity}

Let $\{H_m\}_{m\ge1}$ be a sequence of finite graphs converging as $m\to\infty$ to the infinite honeycomb graph $H$,   and such that the uniform measures on $\Omega_1(H_m)$ converge (weakly) to the unique maximal entropy measure on $\Omega_1(H)$   (see \cite{Kenyon.localstats}; for example $H_m$ can be the honeycomb graph on an $m\times m$ torus).     This implies that the natural measures on $\Omega_2(H_m)$ converge weakly  to that on $\Omega_2(H)$.   On $H_m$ the expected number of loops in a random $2$-multiweb is proportional to its number of vertices, with constant of proportionality $\rho_m$.   The sequence $\rho_m$ converges to a limit  $\rho$ which is the asymptotic density of loops for the (maximal entropy measure on the) honeycomb graph.  We show how to compute it here.

We make use of the following Morse-theoretic lemma, easily proved by deformation (or consider the Morse function $y$ along the loop).

\begin{lemma}\label{morse} 
Let $\gamma$ be a closed polygonal loop in the plane, with no horizontal edges, oriented CCW. Then the number of right-to-left local maxima, minus the left-to-right local maxima, plus left-to-right local minima, minus right-to-left local minima, is $2$. For a CW oriented loop, this sum is $-2$. 
\qed\end{lemma}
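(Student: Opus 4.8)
The plan is to recognize the alternating count as a deformation invariant of the oriented loop and to evaluate it on a standard model. Write $S$ for the quantity in the statement, namely the number of right-to-left maxima, minus the number of left-to-right maxima, plus the number of left-to-right minima, minus the number of right-to-left minima. Since $\gamma$ has no horizontal edges, the height function $y$ has nonzero derivative along every edge, so its local extrema can occur only at vertices; each such vertex is of exactly one of the four types and contributes exactly $\pm 1$ to $S$. First I would record that $S$ is determined entirely by this vertex data, the four types being precisely the four ways the loop can turn around in the vertical direction (a peak or a valley, traversed either leftward or rightward).

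Next I would show that $S$ is unchanged under a generic isotopy of $\gamma$ through embedded polygonal loops, allowing vertical extrema to be created and destroyed. By Morse/Cerf theory the only events affecting the extrema are birth–death moves, in which a small arc is pushed to create (or cancel) an adjacent maximum–minimum pair lying on the same strand. A direct inspection of the local picture shows that the two new extrema always have the same handedness and hence opposite signs in $S$: pushing a strand that is locally rising to the right upward creates a left-to-right maximum (contributing $-1$) immediately followed by a left-to-right minimum (contributing $+1$), and the remaining cases are identical after reflection. Thus every birth–death preserves $S$, so $S$ is an isotopy invariant. Since every simple closed polygonal loop is isotopic (through such loops) to a convex one, I would evaluate $S$ on a CCW diamond: it has a single maximum, traversed right-to-left, and a single minimum, traversed left-to-right, giving $S = 1 + 1 = 2$; reversing the orientation flips every sign and yields $-2$.

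To make the value transparent and to pin down the orientation conventions, I would also compute $S$ through the tangent direction. Let $\theta$ denote the (lifted) tangent angle along $\gamma$. Because there are no horizontal edges, the directions $\theta\equiv 0$ and $\theta\equiv\pi$ are regular, and each vertical extremum is exactly a passage of $\theta$ across one of them, with a sign given by the direction of rotation: a right-to-left maximum (resp.\ minimum) is an increasing (resp.\ decreasing) crossing of $\pi$, while a left-to-right minimum (resp.\ maximum) is an increasing (resp.\ decreasing) crossing of $0$. Writing $N_\pi^{\pm}$ and $N_0^{\pm}$ for the signed crossing counts, one gets $S = (N_\pi^{+}-N_\pi^{-}) + (N_0^{+}-N_0^{-}) = 2w$, where $w$ is the turning number, since the net number of crossings of any regular direction equals $w$; the theorem of turning tangents gives $w = +1$ for an embedded CCW loop, hence $S = 2$.

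The hard part will be the sign bookkeeping in this matching: associating each of the four extremum types to the correct direction of crossing of $0$ or $\pi$ requires care with the orientation convention (for CCW loops the interior lies to the left of the direction of travel), and one must check that for a polygonal loop the tangent's jump at an extremal vertex crosses exactly one of $0,\pi$ and in the correct sense. A secondary point worth flagging is that the conclusion $S=\pm 2$ uses turning number $\pm 1$ and hence that $\gamma$ is \emph{embedded}; this holds in the intended application, since the loop components of a $2$-multiweb are simple.
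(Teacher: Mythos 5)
Your proof is correct and follows essentially the same route as the paper, whose proof is only a one-line hint: ``easily proved by deformation (or consider the Morse function $y$ along the loop)'' --- your birth--death/convex-model argument is precisely the deformation proof, and your turning-number computation is a fleshed-out version of the second hint (for an embedded loop, the turning number of $\partial\Omega$ equals $\chi(\Omega)=1$, which is what the Morse-function argument extracts). Your closing remark that the conclusion requires $\gamma$ to be embedded (turning number $\pm1$) is a worthwhile observation, since the lemma as stated omits this hypothesis but uses it implicitly, the loops of a $2$-multiweb being simple.
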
 

See Figure \ref{morseloop} for an example.

Let $G=(V,E)$ be a subgraph of the honeycomb, with all faces hexagons.  Rotate $G$ so that one of the edge directions is vertical with black vertices at the lower end of the vertical edges.

We have shown that the partition function for double dimers is  $\Zq^+ = \sum_{m\in\Omega_2}(q+\frac1q)^{L}$ where $L=L(m)$ is the number of loops in $m$. By Proposition \ref{secondderiv} together with Proposition \ref{prop:n=2rv}, setting $q=e^\epsilon$,  
$$
\frac{\Zq^+}{\mathbf{Z}_1^+} =1+\frac{\epsilon^2}2\E(L)+O(\epsilon^3)
$$ 
where $\E(L)$ is the expected number of loops.

The local computation of $\Zq^+$ from Theorem \ref{theo:quantumkasteleyndet} leads us to the following method to  compute $\E(L)$ using a classical (that is, not quantum) computation. Suppose a double dimer cover $m$ is obtained by superimposing two independent single dimer covers, one colored with color $1$ and one colored with color $2$. Each loop of $m$ then has a $1-2$ coloring.   We assign a weight $q^\frac{r}{2}$ to such a colored double dimer configuration, where $r$ counts the signed number of local maxima, plus the signed number of local minima, with signs as shown in Figure \ref{maxmin}  (orienting from black to white along edges colored $1$).

By Lemma \ref{morse}, summing over both colorings of each loop indeed gives weighting $q+\frac{1}{q}$ per loop.    We can thus write 
\be\label{Zq1}
\Zq^+ = \sum_{m\in\Omega_2}\sum_c q^{\frac12\sum_vX_v}
\ee
where the sum over $c$ is the sum over colorings of the edges of $m$ with colors $\{1,2\}$, and $X_v$ is $1,-1$ or $0$ according to Figure \ref{maxmin}, that is, if either edge is vertical at $v$, or both edges are the same,  then $X_v=0$; otherwise $X_v=\pm1$ as indicated.  (Alternatively, one can use, instead of $\frac12\sum_vX_v$, the local random variable $Y^L=\sum_v Y^L_v$ from Section \ref{sssec:localrv} for the trivial ciliation $L$ where the cilia all point to the left, say, the result being that vertex contributions at vertices which are not local maxima or minima cancel out, while doubled edges contribute zero.)

Expanding (\ref{Zq1}) with $q=e^\epsilon$ we have
\begin{align*}
\Zq^+ &= \sum_{m\in\Omega_2}\sum_c e^{\frac12\epsilon\sum_v X_v}\\
&=\sum_{m\in\Omega_2}\sum_c \left(1+\frac12\epsilon\sum_v X_v+\frac{\epsilon^2}8\left(\sum X_v\right)^2+O(\epsilon^3)\right)\\
&=\sum_{m\in\Omega_2}\sum_c \left(1+\frac{\epsilon^2}8\left(\sum X_v\right)^2\right)
\end{align*}
ignoring higher order terms, where we used that $X_v$ has expectation $0$ (since exchanging colors negates $X_v$). This gives
$$
\frac{\Zq^+}{\mathbf{Z}_1^+}=1+\frac{\epsilon^2}8\E\left(\left(\sum X_v\right)^2\right)
$$
where the expectation is with respect to the uniform measure on colored double dimer covers, or equivalently the  natural measure on double dimer configurations.    Comparing with the above we conclude that 
$$
\E(L) = \frac14\left(\sum_u\E(X_u^2)+\sum_{u\ne v}\E(X_uX_v)\right).
$$

Consider the whole-plane honeycomb $H$. We parameterize vertices in $H$ with $\Z^2$, as shown in Figure \ref{Hcartesian}.   Let $H_n$ be the $n\times n$ honeycomb on the torus, the quotient of $H$ by translations $(n,0)$ and $n(\frac12,\frac{\sqrt{3}}2)$. As $n\to\infty$ the measures $\mu_n$ on $\Omega_2(H_n)$ converge to the unique maximal entropy measure $\mu$ on $\Omega_2(H)$, see \cite{Kenyon.localstats}.

By translation invariance of $H_n$ and $H$,   the expected number of loops `per vertex' $\rho_n:=\frac{\E(L)}{|V_n|}$ on $H_n$ converges  as $n\to\infty$ to 
$$
\rho = \frac14\left(\E(X_u^2)+\sum_{v:~v\ne u}\E(X_uX_v)\right)
$$
where $u$ is fixed to be the vertex at the origin, and the expectations are with respect to the measure $\mu$. We have $\E(X_u^2)=\frac29$ since the probability of each edge in a single dimer cover is $\frac{1}{3}$.

Suppose without loss of generality that $u$ is white: $u=w_{0,0}$. Let $e_1,e_2$ be the edges right and left from $u$.

First suppose $v$ is also white, with edges $f_1,f_2$ to its right and left. We have 
\begin{align}\nonumber
\E(X_uX_v)&=P^1(e_1,f_1)P^2(e_2,f_2)-P^1(e_1,f_2)P^2(e_2,f_1)-P^1(e_2,f_1)P^2(e_1,f_2)+P^1(e_2,f_2)P^2(e_1,f_1)\\
&=2P(e_1,f_1)P(e_2,f_2)-2P(e_1,f_2)P(e_2,f_1)\label{PP}
\end{align}
where in the first line $P^1$ refers to the first dimer cover and $P^2$ to the second. In the second line we used equivalence of $P^1$ and $P^2$.

Recall \cite{Kenyon.localstats} that for the single dimer model, for two distinct edges $e=wb, e'=w'b'$ their joint probability is  
$$
P(e,e')=\Det\begin{pmatrix}K^{-1}(b,w)&K^{-1}(b',w)\\K^{-1}(b,w')&K^{-1}(b',w')\end{pmatrix}=\frac19-K^{-1}(b',w)K^{-1}(b,w')
$$ 
(and if the edges are the same their probability is $\frac13$). Here $K^{-1}$ is the limiting inverse Kasteleyn matrix on $H$. 

By translation invariance, $K^{-1}(b,w)$ only depends on $b-w$.  Let $B_{x,y} := K^{-1}(b_{x,y},w_{0,0}).$ If $v=w_{x,y}$ is the white vertex at $(x,y)$, the expression (\ref{PP}) is
\begin{align}\nonumber
\E(X_uX_v)&=2(\frac19-B_{x,y}B_{-x,-y})(\frac19-B_{x-1,y}B_{-1-x,-y})-2(\frac19-B_{x-1,y}B_{-x,-y})(\frac19-B_{x,y}B_{-1-x,-y})\\
\label{WW}&=-\frac29(B_{-1-x,-y}-B_{-x,-y})(B_{x-1,y}-B_{x,y})
\end{align}
unless $(x,y)=(0,0)$ (in which case it is $\frac29$). 

If $v$ is the black vertex $b_{x,y}$ at $(x,y)$ with edges $f_1$ to its right and $f_2$ to its left, the expression (\ref{PP}) gives
\begin{align}
&2(\frac19-B_{x,y}B_{-x-1,-y})(\frac19-B_{x,y}B_{-x-1,-y})-2(\frac19-B_{x,y}B_{-x,-y})(\frac19-B_{x,y}B_{-x-2,-y})\nonumber\\
&=\frac29B_{x,y}(B_{-x,-y}-2B_{-1-x,-y}+B_{-x-2,-y}+9B_{-1-x,-y}^2B_{x,y}-9B_{-2-x,-y}B_{-x,-y}B_{x,y})\label{WB}
\end{align}
unless $v=b_{0,0}$ or $v=b_{-1,0}$ in which case there is a slightly different expression, due to either $e_1=f_2$ or $e_2=f_1$.

Notice that when summing over $x,y\in\Z^2$, the quadratic terms in (\ref{WB}) exactly cancel the terms of (\ref{WW}): the term $B_{-1-x,-y}B_{x-1,-y}$ cancels the term $B_{x,y}B_{-x-2,-y}$ when shifting $x$ by $1$, and $B_{-x,-y}B_{x-1,y}$ cancels $B_{x,y}B_{-1-x,-y}$ upon changing the sign of $x$ and $y$.  Only the quartic terms of (\ref{WB}) remain (and the contributions from the exceptions near the origin, yielding (using $B_{-1,0}=B_{0,0}=\frac{1}{3}$ and the formula for $B_{-2,0}$ and $B_{1,0}$ given in Section \ref{sssec:integrals}):
\be\label{rhosum}
\rho = -\frac1{54}+\frac1{6\sqrt{3}\pi}+\frac12\sum_{x,y\in\Z}B_{x,y}^2(B_{-1-x,-y}^2-B_{-2-x,-y}B_{-x,-y}).
\ee

\begin{figure}[t]
\centering
\includegraphics[width=1.1in]{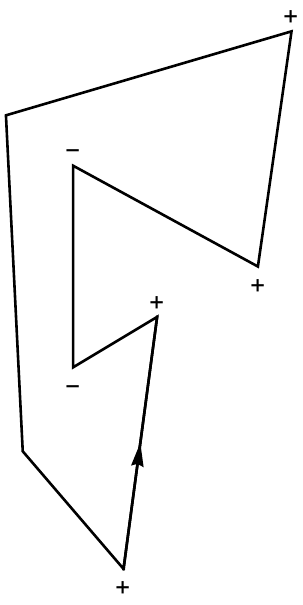}
\caption{\label{morseloop}Contribution from a polygonal loop.}
\end{figure}

\begin{figure}[t]
\centering
\includegraphics[width=6in]{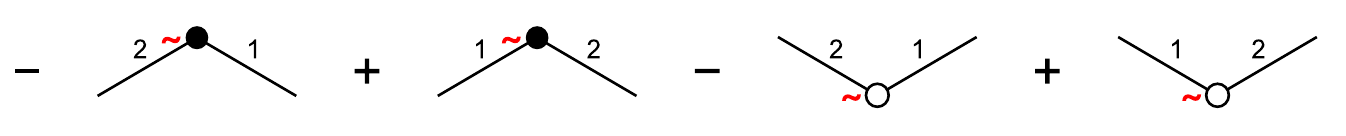}
\caption{\label{maxmin}Local max and min contributions required for the honeycomb.}
\end{figure}

\begin{figure}[t]
\centering
\includegraphics[width=3in]{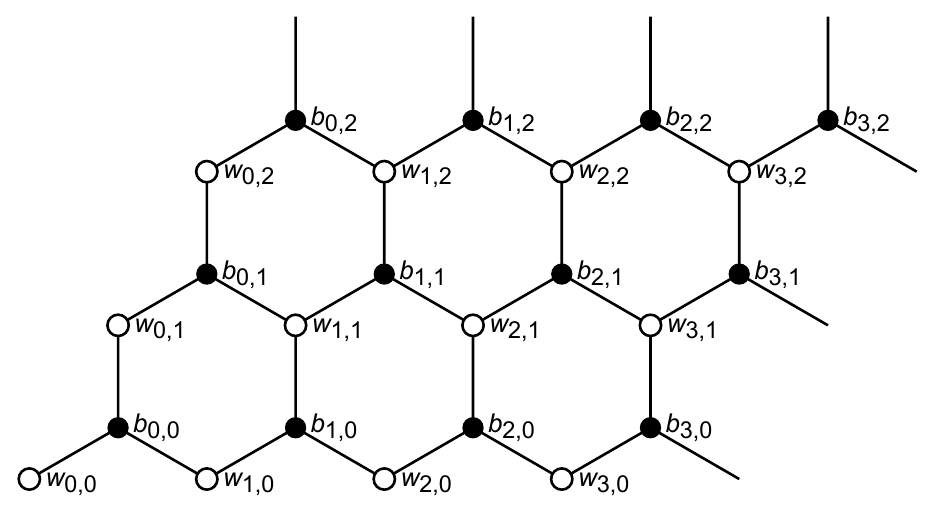}
\caption{\label{Hcartesian}Parameterizing vertices of the honeycomb.}
\end{figure}
 
\subsubsection{Integrals}\label{sssec:integrals}

From \cite{Kenyon.localstats} we have  
$$
B_{x,y} = \frac1{(2\pi i)^2}\iint_{\T^2}\frac{z^{-x} w^{-y}}{1+z+w}\frac{dz}{z}\frac{dw}w
$$ 
that is, $B_{x,y}$ are Fourier coefficients of $\frac1{1+z+w}$.

We get the square of the Fourier coefficients by convolution: 
$$
Q(a,b) := \sum_{x,y\in\Z}B_{x,y}^2a^xb^y= \frac1{(2\pi i)^2}\iint_{\T^2}\frac1{(1+z+w)(1+\frac{a}{z}+\frac{b}{w})}\frac{dz}{z}\frac{dw}{w}.
$$ 
Likewise  
$$
R(a,b) := \sum_{x,y\in\Z}B_{x,y}B_{x+2,y}a^xb^y= \frac1{(2\pi i)^2}\iint_{\T^2}\frac{z^2}{(1+z+w)(1+\frac{a}{z}+\frac{b}{w})}\frac{dz}{z}\frac{dw}{w}.
$$ 
Thus  
$$
\frac12\sum_{x,y\in\Z}B_{x,y}^2(B_{-1-x,-y}^2-B_{-2-x,-y}B_{-x,-y})=\frac1{2(2\pi i)^2}\iint_{\T^2} (aQ(a,b)^2-Q(a,b)R(a,b))\frac{da}{a}\frac{db}{b}.
$$

Summing numerically (\ref{rhosum}) over all $x,y$ with  $|x|,|y|\leq300$ gives 
$$ 
\frac{1}{27.000058...}
$$ 
indicating a probable value of $\frac{1}{27}$. However we were not able to evaluate the above integral exactly.

An analogous computation for the expected number of loops of the double dimer model on the  square grid $\Z^2$ yields a similar integral with numerical value very close to $\frac{1}{16}$. We conjecture that $\frac{1}{27}$ and $\frac{1}{16}$ are the exact values.
\appendix

\section{Grassmann variables}\label{sec:qgrassmann}  

\subsection{Classical Grassmann variables}\label{ssec:reviewofgrassmannvariables}

Grassmann integration is a compact way to encode antisymmetric structures, such as determinants. We review it here.   (See e.g. \cite{bigYellowCFT1997}.)

The \emph{Grassmann algebra} over the set $\{1,\dots,n\}$ is generated by a set of anticommuting variables $\psi_1, \psi_2, \dots, \psi_n$ over $\mathbb{C}$. This means
$$
\{ \psi_i, \psi_j \}
:=
\psi_i \psi_j + \psi_j \psi_i = 0
$$ 
for all  $i,j$.  Note  $\psi_i^2 = 0$ for all  $i$. Note also that a general element $f$ of the Grassmann algebra is represented by a polynomial with terms of order $\leq 1$ in each variable, that is
$$
f=\sum_{k=0}^{n} \sum_{1 \leq i_1 < \dots < i_k \leq n} c_{i_1,\dots,i_k} \psi_{i_1} \dots \psi_{i_k}.
$$
An element is \emph{Grassmann even (resp. odd)} if all monomials consist of an even (resp. odd) number of variables of degree $1$.  Note that even elements commute with all elements, and odd elements anticommute with odd elements.  

Introduce more anticommuting variables $d\psi_1,\dots,d\psi_n$ satisfying
$$
\{ d\psi_i, \psi_j \}
= 0
$$
for all {$i,j$}, which play the role of differentials for integration. 

Consider a nonzero monomial $d\psi_{j_1} \dots d\psi_{j_l} \psi_{i_1} \dots \psi_{i_k}$ in these variables (so $j_1,\dots,j_l$ are distinct and $i_1,\dots,i_k$ are distinct). For each $i\notin\{i_1,\dots,i_k\}\cup\{j_1,\dots,j_{l}\}$ define
\begin{gather*}
\int_i d\psi_{j_1} \dots d\psi_{j_{l}} d\psi_{i} \psi_{i} \psi_{i_1} \dots \psi_{i_k} 
:=
d\psi_{j_1} \dots d\psi_{j_{l}} \psi_{i_1} \dots \psi_{i_k},
\\
\int_i d\psi_{j_1} \dots d\psi_{j_l} \psi_{i_1} \dots \psi_{i_k} 
:= \int_i d\psi_{j_1} \dots d\psi_{j_l} d\psi_{i} \psi_{i_1} \dots \psi_{i_k} 
:= \int_i d\psi_{j_1} \dots d\psi_{j_l} \psi_{i} \psi_{i_1} \dots \psi_{i_k} 
:= 0.  
\end{gather*}
These assignments extend linearly to define a polynomial $\int_i p$ in the $\psi_j$ and $d\psi_j$, called the \emph{integral}, for every polynomial $p$ in the $\psi_j$ and $d\psi_j$.  Henceforth, we will denote by $\int d\psi_{j_1} \dots d\psi_{j_{l}} f$ the integral $\int_{j_1} \dots \int_{j_{l}} d\psi_{j_1} \dots  d\psi_{j_{l}} f$ for $f$ an element of the Grassmann algebra.  

As examples, we have
\begin{gather*}
\int d\psi_i = 0, 
\int d\psi_1 d\psi_2  \psi_2 \psi_1 = \int d\psi_1  \psi_1 = 1,
\int d\psi_1  \psi_2 \psi_1 = -\psi_2,
\int d\psi_i \psi_i = 1 = -\int \psi_i d\psi_i,\\
\int d\psi_1 d\psi_2  \psi_1 \psi_2 = -\int d\psi_2 d\psi_1  \psi_1 \psi_2 = -1,
\int d\psi_1 d\psi_2 d\psi_3 d\psi_4  \psi_2 \psi_1 = 0.
\end{gather*}

Now, let $\psi_1,\dots,\psi_n$, $\bar{\psi}_1,\dots,\bar{\psi}_n$ be two independent sets of Grassmann variables and $d\psi_1,\dots,d\psi_n$, $d\bar{\psi}_1,\dots,d\bar{\psi}_n$ their corresponding differentials such that the bar variables anticommute with the non-bar variables. Then, we have the following fact. 

\begin{prop} 
For any matrix $M \in \Mat_{n}$
$$
\int 
\left( \prod_{i=1}^{n} d\bar{\psi}_i d{\psi}_i \right) 
\exp\left(- \sum_{i,j} \bar{\psi}_i M_{ij} \psi_j\right)
= 
\Det(M).
$$
\end{prop}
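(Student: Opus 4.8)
The plan is to exploit the nilpotency of the Grassmann variables to collapse the integral to a single sum over permutations. Write $S=\sum_{i,j}\bar\psi_i M_{ij}\psi_j$ for the quadratic form in the exponent. Each summand $\bar\psi_i M_{ij}\psi_j$ is Grassmann even of degree two, and $S$ is nilpotent, so the exponential is the finite sum $\exp(-S)=\sum_{k=0}^{n}\frac{(-1)^k}{k!}S^k$, where $S^{k}$ has total degree $2k$ and $S^{n+1}=0$ for degree reasons (any element of degree $2n+2>2n$ must repeat a variable). First I would observe that the Berezin integration rule annihilates every monomial not containing all $2n$ variables $\psi_1,\dots,\psi_n,\bar\psi_1,\dots,\bar\psi_n$ exactly once: if some $\psi_i$ (resp.\ $\bar\psi_i$) is absent, the pairing with its differential $d\psi_i$ (resp.\ $d\bar\psi_i$) produces $0$. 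Hence only the top-degree term $\frac{(-1)^n}{n!}S^n$ survives the integration.

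Next I would expand
$$
S^n=\sum_{a_1,b_1,\dots,a_n,b_n}M_{a_1b_1}\cdots M_{a_nb_n}\,(\bar\psi_{a_1}\psi_{b_1})\cdots(\bar\psi_{a_n}\psi_{b_n}),
$$
using that the pair factors $\bar\psi_a\psi_b$ are even, hence commute with one another, while the matrix entries are scalars. A nonzero contribution forces the $\bar\psi$'s to be distinct and the $\psi$'s to be distinct, so $(a_1,\dots,a_n)$ and $(b_1,\dots,b_n)$ are both permutations of $\{1,\dots,n\}$. Reordering the commuting even factors (which costs no sign) so that the $\bar\psi$ indices increase, each such term becomes $\big(\prod_i M_{i\sigma(i)}\big)\prod_{i=1}^n(\bar\psi_i\psi_{\sigma(i)})$ for the permutation $\sigma=b\circ a^{-1}$; since for fixed $\sigma$ there are exactly $n!$ pairs $(a,b)$ producing it, one gets $S^n=n!\sum_{\sigma\in\mathfrak S_n}\big(\prod_i M_{i\sigma(i)}\big)\prod_{i=1}^n(\bar\psi_i\psi_{\sigma(i)})$, and the $n!$ cancels the $1/n!$.

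The main obstacle is the sign bookkeeping, which I would handle as follows. It remains to evaluate $\int\big(\prod_i d\bar\psi_i d\psi_i\big)\prod_{i=1}^n(\bar\psi_i\psi_{\sigma(i)})$. Anticommuting the $\bar\psi$'s to the front (moving $\bar\psi_k$ past the $k-1$ preceding $\psi$'s) contributes $(-1)^{\binom{n}{2}}$, and then rearranging $\psi_{\sigma(1)}\cdots\psi_{\sigma(n)}$ into $\psi_1\cdots\psi_n$ contributes $(-1)^{\ell(\sigma)}$; applying the same rearrangement to the reference monomial $\prod_i\bar\psi_i\psi_i$ shows $\prod_i(\bar\psi_i\psi_{\sigma(i)})=(-1)^{\ell(\sigma)}\prod_i(\bar\psi_i\psi_i)$. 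A direct application of the integration rules, factoring over the commuting even blocks $d\bar\psi_i d\psi_i\,\bar\psi_i\psi_i$ (each of which evaluates to $-1$, as in the one-variable example), gives $\int\big(\prod_i d\bar\psi_i d\psi_i\big)\prod_i(\bar\psi_i\psi_i)=(-1)^n$. Combining the signs, the surviving term integrates to $(-1)^n\cdot(-1)^n\sum_\sigma(-1)^{\ell(\sigma)}\prod_i M_{i\sigma(i)}=\sum_{\sigma\in\mathfrak S_n}(-1)^{\ell(\sigma)}\prod_i M_{i\sigma(i)}=\Det(M)$, as desired. The only real care needed is to keep the two factors of $(-1)^n$ (one from $\exp(-S)$, one from the reference monomial) consistent with the paper's measure ordering convention $\prod_i d\bar\psi_i d\psi_i$.
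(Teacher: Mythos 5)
Your proposal is correct, and it reaches the determinant by a genuinely (if mildly) different expansion than the paper's. The paper exploits the fact that the summands $\bar{\psi}_i M_{ij}\psi_j$ are commuting even nilpotents to factorize the exponential as $\prod_{i,j}\bigl(1-\bar{\psi}_i M_{ij}\psi_j\bigr)$; expanding this product, each surviving term automatically picks exactly one factor for each row index, so the permutation sum appears with multiplicity one and no factorials ever enter. You instead expand the power series $\exp(-S)=\sum_{k}\frac{(-1)^k}{k!}S^k$, observe that only the top term $\frac{(-1)^n}{n!}S^n$ survives Berezin integration, and then do the multinomial bookkeeping: each permutation $\sigma$ arises from $n!$ orderings of the pair factors, which exactly cancels the $1/n!$. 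This is the standard "physics textbook" route; it costs you the extra counting step and a slightly heavier sign analysis (your two canceling factors of $(-1)^n$, one from $\exp(-S)$ and one from the reference integral $\int\bigl(\prod_i d\bar{\psi}_i d\psi_i\bigr)\prod_i\bar{\psi}_i\psi_i=(-1)^n$, versus the paper's choice to rewrite $-\bar{\psi}_i\psi_j=\psi_j\bar{\psi}_i$ and reorder the measure as $d\bar{\psi}_n d\psi_n\cdots d\bar{\psi}_1 d\psi_1$ so that the reference integral is $+1$ outright). What your version buys is independence from the product factorization trick: it only uses nilpotency, evenness of the quadratic form, and the integration rules, and all of your individual steps (the vanishing of $S^k$ for $k<n$ and $k>n$, the identity $\prod_i\bigl(\bar{\psi}_i\psi_{\sigma(i)}\bigr)=(-1)^{\ell(\sigma)}\prod_i\bigl(\bar{\psi}_i\psi_i\bigr)$ via the common $(-1)^{\binom{n}{2}}$, and the block-by-block evaluation of the reference integral) check out against the paper's conventions.
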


\begin{proof}
First, note that the order of the differentials $d\bar{\psi}_i d{\psi}_i$ does not matter since these are even.  Next, note that since the terms of $-\sum_{i,j} \bar{\psi}_i M_{ij} \psi_j$ all commute, we can write
$$
\exp\left(- \sum_{i,j} \bar{\psi}_i M_{ij} \psi_j\right)
=
\prod_{i,j} \exp( - \bar{\psi}_i M_{ij} \psi_j )
=
\prod_{i,j} ( 1 - \bar{\psi}_i M_{ij} \psi_j )
$$
where the last equality follows from noting that $(\bar{\psi}_i \psi_j )^2 = 0$. Now after expanding, we note that the only terms contributing to the integral will have exactly one of each $\psi_1,\dots,\psi_n$ and $\bar{\psi}_1,\dots,\bar{\psi}_n$. This means we can write 
\begin{gather*}
\int 
\left( \prod_{i=1}^{n} d\bar{\psi}_i d{\psi}_i \right) 
\exp\left(- \sum_{i,j} \bar{\psi}_i M_{ij} \psi_j\right)
\\
= 
\int 
\left( \prod_{i=1}^{n} d\bar{\psi}_i d{\psi}_i \right) 
\sum_{\sigma \in \mathfrak{S}_n}
( - \bar{\psi}_1 M_{1 \sigma(1)} \psi_{\sigma(1)} ) 
( - \bar{\psi}_2 M_{2 \sigma(2)} \psi_{\sigma(2)} ) 
\dots
( - \bar{\psi}_n M_{n \sigma(n)} \psi_{\sigma(n)} )
\\
= 
\sum_{\sigma \in \mathfrak{S}_n} 
\int 
( d\bar{\psi}_n d{\psi}_n \dots  d\bar{\psi}_1 d{\psi}_1 ) 
( \psi_{\sigma(1)} \bar{\psi}_1  \dots \psi_{\sigma(n)} \bar{\psi}_n )
M_{1 \sigma(1)} \dots M_{n \sigma(n)}
\\
= 
\sum_{\sigma \in \mathfrak{S}_n} (-1)^{\sigma}
\int 
( d\bar{\psi}_n d{\psi}_n \dots  d\bar{\psi}_1 d{\psi}_1 ) 
( \psi_{1} \bar{\psi}_1  \dots \psi_{n} \bar{\psi}_n )
M_{1 \sigma(1)} \dots M_{n \sigma(n)}
\\
= 
\sum_{\sigma \in \mathfrak{S}_n} (-1)^{\sigma}
M_{1 \sigma(1)} \dots M_{n \sigma(n)}
= \Det(M).
\end{gather*}
\end{proof}

\subsection{Quantum Grassmann variables}

We discuss a quantization of the classical Grassmann variables story (Section \ref{ssec:reviewofgrassmannvariables}) and relate it to the quantum determinant and trace.

Consider an $n \times n$ quantum matrix $M$, defined as in Definition \ref{defi:quantummatrix}, whose entries we may view as belonging to some noncommutative algebra over $\mathbb{C}$, and let $q \in \mathbb{C}\setminus\{0\}$.

For the purposes of this subsection, we will actually only need the relation
\be \label{eq:mainRelation}
M_{i j} M_{k l} - q M_{i l} M_{k j}
= 
M_{k l} M_{i j} - q^{-1} M_{k j} M_{i l}
\ee 
for $1\leq i<k\leq n$ and $1\leq j<l\leq n$, which follows readily from the definition of the quantum matrix $M$.  

The \textit{quantum Grassmann algebra} is generated by variables $\psi_1, \psi_2, \dots, \psi_n$ subject to the relations  
$$
\psi_i \psi_j = -q^{-1} \psi_j \psi_i  \text{ for } i < j, 
\psi_i^2 = 0  \text{ for all } i.
$$
Letting $\bar{\psi}_i$ be the generators of another copy of the quantum Grassmann algebra, we also assume the antisymmetric cross relations
$$
{\psi}_i \bar{\psi}_j = - \bar{\psi}_j {\psi}_i  \text{ for all } i,j.
$$
Henceforth, we also assume that all quantum matrix elements $M_{ij}$ commute with all quantum Grassmann variables $\psi_i$, $\bar{\psi}_i$.

The relations $\psi_i^2 = \bar{\psi}_i^2 = 0$ imply 
$$
\left(\sum_{i,j} \bar{\psi}_i  M_{i j}\psi_j\right)^n
=
\sum_{\pi \in \mathfrak{S}_n} \sum_{\sigma \in \mathfrak{S}_n}
\bar{\psi}_{\pi(1)} \psi_{\sigma(1)} \dots \bar{\psi}_{\pi(n)} \psi_{\sigma(n)}
M_{\pi(1) \sigma(1)} \dots M_{\pi(n) \sigma(n)}.
$$
In particular, all terms in the double sum are proportional to $\bar{\psi}_{1}{\psi}_{1} \dots \bar{\psi}_{n}{\psi}_{n}$.  

To find the coefficients, the strategy is to reorder all the terms in the first index to be in order using the relation \eqref{eq:mainRelation} as follows. There will generally be many pairs of compatible terms in the expansion whose first indices of various factors are out of order. Precisely, for $1 \leq i < k \leq n$, $1 \leq j < l \leq n$, consider pairs of the form
$$
\dots 
\bar{\psi}_k \psi_{l} \bar{\psi}_i \psi_j   M_{k l} M_{i j}
\dots
,\quad
\dots 
\bar{\psi}_k \psi_{j} \bar{\psi}_i \psi_{l}   M_{k j} M_{i l}
\dots
$$
where the $\dots$ on both sides match up. All such terms whose first indices are out of order come in pairs like this.  We can add these up to get
\begin{gather*}
\dots 
( 
\bar{\psi}_k \psi_{l} \bar{\psi}_i \psi_j   M_{k l} M_{i j} 
+
\bar{\psi}_k \psi_{j} \bar{\psi}_i \psi_{l}   M_{k j} M_{i l}
)
\dots
=
\dots 
( 
\bar{\psi}_k \psi_{l} \bar{\psi}_i \psi_j   M_{k l} M_{i j} 
- 
q^{-1}
\bar{\psi}_k \psi_{l} \bar{\psi}_i \psi_j   M_{k j} M_{i l}
)
\dots
\\
=
\dots 
\bar{\psi}_k \psi_{l} \bar{\psi}_i \psi_j
( 
M_{k l} M_{i j} - q^{-1} M_{k j} M_{i l}
)
\dots
=
q^{2} 
\dots 
\bar{\psi}_i \psi_j \bar{\psi}_k \psi_{l}
( 
M_{i j} M_{k l} - q M_{i l} M_{k j}
)
\dots
\\
=
q^{2} 
\dots
( 
\bar{\psi}_i \psi_j \bar{\psi}_k \psi_{l}
M_{i j} M_{k l} + 
\bar{\psi}_i \psi_l \bar{\psi}_k \psi_{j} M_{i l} M_{k j}
)
\dots.
\end{gather*}
The first equality used the Grassmann commutation relations to rewrite  $\bar{\psi}_k \psi_{j} \bar{\psi}_i \psi_{l} = -q^{-1} \bar{\psi}_k \psi_{l} \bar{\psi}_i \psi_j$. The third equality used Equation \eqref{eq:mainRelation} as well as  $\bar{\psi}_k \psi_{l} \bar{\psi}_i \psi_j = q^{2} \bar{\psi}_i \psi_j \bar{\psi}_k \psi_{l}$. The last equality follows from $\bar{\psi}_i \psi_j \bar{\psi}_k \psi_{l} = -q^{-1}\bar{\psi}_i \psi_l \bar{\psi}_k \psi_{j}$.

As such, we have the freedom to reorder all terms (in pairs) to put the first indices in order. However, doing this process leaves a factor of $q^{2 \ell(\pi)}$ for each permutation $\pi$.  Doing this procedure removes the sum over $\pi$ and gives a factor of (compare the proof of Proposition \ref{prop:comparisonofqtraces})
$$
\sum_{\pi \in \mathfrak{S}_n} q^{2\ell(\pi)} = q^{\binom{n}{2}} [n]!.
$$

Gathering, we have
\begin{gather*}
\left(\sum_{i,j} \bar{\psi}_i  M_{i j}\psi_j\right)^n
=
q^{\binom{n}{2}} [n]!
\sum_{\sigma \in \mathfrak{S}_n}
\bar{\psi}_{1} \psi_{\sigma(1)} \dots \bar{\psi}_{n} \psi_{\sigma(n)}
M_{1 \sigma(1)} \dots M_{n \sigma(n)}
\\
=
q^{\binom{n}{2}} [n]!
\sum_{\sigma \in \mathfrak{S}_n}
(-q)^{\ell(\sigma)}
\bar{\psi}_{1} \psi_{1} \dots \bar{\psi}_{n} \psi_{n}
M_{1 \sigma(1)} \dots M_{n \sigma(n)}.
\end{gather*}

The above calculations can be summarized by the following identity.

\begin{prop}
Define 
$$
\exp_{q}( x ) := \sum_{k=0}^{\infty} \frac{x^k}{q^{\binom{k}{2}} [k]!}.
$$ 
Then,
$$
\exp_q\left(-\sum_{i,j} \bar{\psi}_i  M_{i j}\psi_j\right)
=
\sum_{k=0}^{n} (-1)^k 
\sum_{\substack{\{i_1 < \dots < i_k\} \subset \{1,\dots,n\} \\ \{j_1 < \dots < j_k\} \subset \{1,\dots,n\}}} 
\bar{\psi}_{i_1} \psi_{j_1} \dots \bar{\psi}_{i_k} \psi_{j_k}
\det_q(M,i_1 \dots i_k,j_1 \dots j_k)
$$
where
$$
\det_q(M,i_1 \dots i_k,j_1 \dots j_k):=\sum_{\sigma \in \mathfrak{S}_k}
(-q)^{\ell(\sigma)}
M_{i_1 j_{\sigma(1)}} \dots M_{i_k j_{\sigma(k)}}.
$$
\qed\end{prop}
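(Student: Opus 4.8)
The plan is to compute the powers $S^k$ of the element $S := \sum_{i,j}\bar\psi_i M_{ij}\psi_j$ and then assemble them through the definition of $\exp_q$. First I would record the basic reductions coming from nilpotency: since $\psi_i^2 = \bar\psi_i^2 = 0$, expanding
$$
S^k = \sum_{(a_1,b_1),\dots,(a_k,b_k)} \bar\psi_{a_1}\psi_{b_1}\cdots\bar\psi_{a_k}\psi_{b_k}\,M_{a_1 b_1}\cdots M_{a_k b_k}
$$
annihilates every term in which two row indices $a_r$ coincide or two column indices $b_r$ coincide. Consequently $S^k = 0$ for $k > n$, so the sum over $k$ truncates at $n$, and for $k \le n$ the surviving monomials are indexed by a choice of $k$-element row set $I = \{i_1 < \cdots < i_k\}$ and column set $J = \{j_1 < \cdots < j_k\}$ together with the orders in which their elements appear.

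Second, I would group the expansion of $S^k$ by the pair $(I,J)$ and observe that the contribution of each fixed $(I,J)$ is governed entirely by the relation \eqref{eq:mainRelation} and the Grassmann relations restricted to indices in $I \cup J$. This is precisely the situation analyzed in the computation of $S^n$ immediately preceding the proposition: every $k \times k$ submatrix $(M_{ij})_{i\in I,\, j\in J}$ is again a quantum matrix (Example \ref{exam:quantummatrixexamples}(1)), so the same pairwise reordering — moving the row indices into increasing order at the cost of a factor $q^{2\ell(\pi)}$ for each reordering permutation $\pi \in \mathfrak{S}_k$, then summing $\sum_{\pi\in\mathfrak{S}_k} q^{2\ell(\pi)} = q^{\binom{k}{2}}[k]!$ — applies verbatim, with the residual sum over column orderings $\sigma \in \mathfrak{S}_k$ producing the quantum minor. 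This yields
$$
S^k = q^{\binom{k}{2}}[k]! \sum_{I,J} \bar\psi_{i_1}\psi_{j_1}\cdots\bar\psi_{i_k}\psi_{j_k}\,\det_q(M, i_1\dots i_k, j_1\dots j_k),
$$
the outer sum ranging over all pairs of increasing $k$-subsets. Third, I would substitute this into $\exp_q(-S) = \sum_{k=0}^n (-1)^k S^k / (q^{\binom{k}{2}}[k]!)$; the normalization in the definition of $\exp_q$ is chosen exactly to cancel the prefactor $q^{\binom{k}{2}}[k]!$, leaving the sign $(-1)^k$ and the stated double sum over minors, which is the desired identity.

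The main obstacle will be justifying the second step cleanly, namely that the reordering computation carried out for the full matrix localizes to each $(I,J)$-block without interference. The points to verify are that the matrix entries commute with all Grassmann variables (so the reordering of the $\psi$- and $\bar\psi$-monomials is insensitive to the $M_{ij}$), and that the reordering only ever pairs terms sharing the same support $(I,J)$, so that the factor $\sum_{\pi} q^{2\ell(\pi)}$ arises separately and identically for each block. Once one notes that the argument for $S^n$ used only relation \eqref{eq:mainRelation} and the Grassmann relations — both stable under restriction to a subset of indices — the localization is immediate, and the remaining sign and $q$-power bookkeeping mirrors the proof of Proposition \ref{prop:comparisonofqtraces}.
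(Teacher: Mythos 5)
Your proposal is correct and follows essentially the same route as the paper: the paper's ``proof'' of this proposition is precisely the computation of $\bigl(\sum_{i,j}\bar\psi_i M_{ij}\psi_j\bigr)^n$ carried out just before the statement (nilpotency, pairwise reordering via relation \eqref{eq:mainRelation} producing $\sum_{\pi}q^{2\ell(\pi)}=q^{\binom{k}{2}}[k]!$, then cancellation against the normalization in $\exp_q$), implicitly applied to each power $S^k$ and each support pair $(I,J)$. Your explicit justification that the argument localizes to $(I,J)$-blocks is exactly the detail the paper leaves to the reader.
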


We can similarly define the Grassmann integral, so that
$$
\int d\bar{\psi}_n d\psi_n \dots d\bar{\psi}_1 d\psi_1
\bar{\psi}_{i_1} \psi_{j_1} \dots \bar{\psi}_{i_k} \psi_{j_k}
\exp_q \left(-\sum_{i,j} \bar{\psi}_i  M_{i j} \psi_j \right)
=
 \pm q^\alpha\det_q(M,{\hat{i}}_1 \dots {\hat{i}}_{n-k},{\hat{j}}_1 \dots {\hat{j}}_{n-k})
$$
for some sign $\pm$ and some power $\alpha$ of $q$, and where the sign is $+$ and $\alpha=0$ when $k=0$.  

\section{Code}\label{app:code}

Mathematica code computing many of the quantities discussed in this paper can be found at the website of the fourth author; see \url{https://sites.google.com/yale.edu/samuelpanitch/research}
\bibliographystyle{plain}
\bibliography{references.bib}
\end{document}